\newtheorem{theorem}{Theorem}[section]
\newtheorem{lemma}[theorem]{Lemma}
\newtheorem{corollary}[theorem]{Corollary}
\newtheorem{remark}[theorem]{Remark}
\newtheorem{definition}[theorem]{Definition}
\newcommand{\be}{\begin{equation}}
\newcommand{\ee}{\end{equation}}
\newcommand{\bigslant}[2]{{\raisebox{.2em}{$#1$}\left/\raisebox{-.2em}{$#2$}\right.}}
\numberwithin{equation}{section}
\newcommand{\Ch}{{\rm Ch}}
\newcommand{\sfd}{{\sf d}}
\DeclareMathOperator{\diam}{diam}
\DeclareMathOperator{\supp}{spt}
\newcommand{\RCD}{\mathsf{RCD}}
\newcommand{\CD}{\mathsf{CD}}
\newcommand{\mm}{\mathfrak m}
\newcommand{\gopt}{{\rm{OptGeo}}}
\newcommand{\ppi}{{\mbox{\boldmath$\pi$}}}
\newcommand{\ggamma}{{\mbox{\boldmath$\gamma$}}}
\newcommand{\sggamma}{{\mbox{\scriptsize\boldmath$\gamma$}}}
\newcommand{\geo}{{\rm{Geo}}}
\newcommand{\e}{{\rm{e}}}
\newcommand{\LL}{\mathcal{L}}
\newcommand{\R}{\mathbb{R}}
\renewcommand{\d}{{\textrm {d}}}
\newcommand\Z{{\mathbb Z}}
\newcommand\N{{\mathbb N}}
\begin{document}

\title{On the universal cover and the fundamental group of an $\RCD^*(K,N)$-space}

\author{Andrea Mondino}  \thanks{A. Mondino,  MSRI-Berkeley\&Universit\"at Z\"urich, Institut f\"ur Mathematik. email: \textsf{andrea.mondino@math.uzh.ch}}
\author {Guofang Wei} \thanks{G. Wei,  Department of mathematics, University of California Santa Barbara.   email: \textsf{wei@math.ucsb.edu} Partially supported by NSF DMS 1506393}

\maketitle

\begin{abstract} 
The main goal of the paper is to prove the existence of the universal cover for  $\RCD^*(K,N)$-spaces. This generalizes earlier work of \cite{SW1, SW2} on the existence of universal covers for Ricci limit spaces. As a result, we also obtain several structure results on the (revised)  fundamental group of $\RCD^*(K,N)$-spaces. These are the first topological results for $\RCD^{*}(K,N)$-spaces without extra structural-topological assumptions (such as semi-local simple connectedness). 
\end{abstract}

\tableofcontents

\section{Introduction}

One of the most classical and  fundamental problems in geometry is to study the relation between curvature and topology. An excellent example of such an interplay is the celebrated Gauss-Bonnet Theorem which relates the total  integral of the  Gauss curvature with the Euler characteristic of a closed surface.

One of the simplest topological invariants of a topological space is the fundamental group, which encodes the information about non-contractible closed loops.  While the sectional curvature controls more than the fundamental group (in fact, as proved by Gromov \cite{Gromov-Betti}, sectional curvature bounds give information on  all Betti numbers)  and scalar curvature does not give any control on the fundamental group when the  dimension is at least 4 (see \cite{Carr}),    Ricci curvature controls the fundamental group very well, see e.g. \cite{Myers, Mil, ChGr, K-Wilking, An}, see also \cite{Wei, Wilking}.  A key technical point in all of these papers is that,  in order  to get information on the fundamental group, one works on the universal cover of the manifold; more generally, one could say that  the principle behind such works is that \emph{understanding the fundamental group of a manifold is equivalent to understand the geometry of its  universal cover}.

For topological manifolds, the existence and uniqueness of a universal cover are well known facts. Moreover,  for Riemannian manifolds, one can endow the universal cover with a Riemannian metric  so that the covering map is a local isometry; in this way,  the curvature assumptions on the base are inherited automatically by the cover.  
\\Extending such fundamental facts to more general spaces presents two problems:
\begin{enumerate}
\item   A priori the universal cover may not exist (see Remark \ref{rem:ExistUC} for more details).
\item  The curvature condition may not be local (see the discussion below). 
\end{enumerate}
In \cite{SW1, SW2}, C. Sormani and the second named author developed a general strategy  for showing the  existence of  a universal cover for complete proper length spaces satisfying appropriate regularity and geometric assumptions. One of their main achievements was to prove the existence of a universal cover  for  the Gromov-Hausdorff limits of manifolds  with Ricci curvature uniformly bounded from below (see also \cite{B-Plaut} for  a more general notion of covering space, called  uniform universal cover).
The goal of the present paper is to extend such results to the so called $\RCD^{*}(K,N)$-spaces; in the next paragraph we recall some basics facts about such spaces. 

\subsection{Lower Ricci curvature bounds for metric measure spaces}
While lower sectional curvature bounds are a property of the distance and make perfect sense in the framework of metric spaces, the natural setting being provided by the celebrated Alexandrov geometry, Ricci curvature lower bounds involve the \emph{interplay of distance and volume}. The natural framework is indeed given by metric measure spaces $(X,\sfd,\mm)$, where $(X,\sfd)$ is a complete and separable metric space and $\mm$ is a reference measure that plays the role of the volume measure in a non-smooth setting.

By using tools of optimal transportation, Lott-Villani \cite{Lott-Villani09} and Sturm \cite{Sturm06I}-\cite{Sturm06II}, introduced  the so called $\CD(K,N)$-condition  which corresponds to the non-smooth analog for a metric measure space to have Ricci curvature bounded below by $K$ and dimension bounded above by $N$.  Two fundamental properties of such condition are the compatibility with the smooth counterpart (i.e. a smooth Riemannian manifold satisfies  the  $\CD(K,N)$-condition if and only if it has actually Ricci $\geq K$ and dimension $\leq N$) and the stability with respect to  measured Gromov Hausdorff convergence (to this purpose see also \cite{GigliMondinoSavare}).  Despite remarkable partial results (see for instance \cite{Cavalletti12},\cite{CM2}, \cite{CavaSturm},  \cite{Rajala}) it is still unclear if the $\CD(K,N)$-condition satisfies a local-to-global property under mild regularity assumptions (e.g. essential non-branching) when $K \not= 0$ and $N<\infty$. 
\\To this aim, Bacher and Sturm \cite{BS2010} introduced an apriori weaker curvature condition called \emph{reduced curvature condition} and denoted by $\CD^{*}(K,N)$ which instead satisfies the local-to-global property  as well as the stability and  most of the geometric comparison consequences (e.g. sharp Bishop-Gromov \cite{CavaSturm}, sharp L\'evy-Gromov \cite{CM1},  sharp Brunn-Minkoski \cite{CM2}). Let us mention that   $\CD(K,N)$   imples $\CD^{*}(K,N)$   and the two are the  same  when $K =0$.  

A second issue regarding the $\CD(K,N) \backslash \CD^{*}(K,N)$-conditions above is that while on the one hand they allow for  Finsler structures, on the other hand it was understood since the work of Cheeger-Colding 
\cite{Cheeger-Colding97I},\cite{Cheeger-Colding97II},\cite{Cheeger-Colding97III}  that purely Finsler structures never appear as Ricci limit spaces.   Inspired by this fact, in \cite{Ambrosio-Gigli-Savare11b}, Ambrosio-Gigli-Savar\'e proposed a  strengthening of the $\CD$ condition in order to enforce, in some weak sense, a Riemannian-like behavior of spaces with a curvature-dimension bound (see also \cite{AmbrosioGigliMondinoRajala}); the finite dimensional refinement led to the so called $\RCD^{*}(K,N)$ condition. 
\\Such a strengthening consists in requiring that the space $(X,\sfd,\mm)$ is such that the Sobolev space $W^{1,2}(X,\sfd,\mm)$ is Hilbert, a condition we shall refer to as `infinitesimal Hilbertianity'. 
Since the $\RCD^{*}(K,N)$ condition is stable under convergence (see  \cite{Ambrosio-Gigli-Savare11b}, \cite{GigliMondinoSavare} and  \cite{EKS}),  the class of $\RCD^{*}(K,N)$-spaces includes Ricci limit spaces (no matter if collapsed or not). Moreover it contains weighted manifolds satisfying Bakry-\'Emery lower curvature bounds as well as their non-smooth limits, cones, warped products, etc..  For more details about $\RCD^{*}(K,N)$-spaces  we refer to Section~\ref{RCD_intro}.
\\For the moment let us summarize by saying that one can think of the $\RCD^{*}(K,N)$-spaces as the Ricci curvature analog of the celebrated Alexandrov spaces.

\subsection{Main results}
Since, as discussed above, the $\RCD^{*}(K,N)$-condition enjoys the local-to-global property, the universal cover (if we are able to  show its existence) will have good chances to be $\RCD^{*}(K,N)$ as well, and the above program consisting  in using the geometric information on the universal cover in order to infer structural properties of the fundamental group will have good chances too. This was indeed one of the main motivations for us to write the present paper, whose main result is the following:

\begin{theorem}\label{thm:UnCovRCD-noncompactIntro}
Let $(X,\sfd,  \mm)$ be an  $\RCD^*(K,N)$-space for some $K\in \R$, $N\in(1, +\infty)$.  Then $(X,\sfd,  \mm)$ admits a universal cover $(\tilde{X},\tilde{\sfd},\tilde{\mm})$ which is itself an  $\RCD^*(K,N)$-space. 
\end{theorem}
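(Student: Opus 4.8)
The plan is to follow the Sormani--Wei strategy adapted to the $\RCD^*(K,N)$ setting, splitting the argument into a topological existence step and a geometric inheritance step. The key structural input is that an $\RCD^*(K,N)$-space $(X,\sfd,\mm)$ is a complete, separable, locally compact geodesic space whose balls have finite and positive measure; in particular, by the Bishop--Gromov inequality it is proper and locally doubling. These are exactly the regularity hypotheses under which \cite{SW1,SW2} construct a universal cover. I would first recall the notion of $\delta$-cover $\tilde X^\delta$: the covering space associated to the subgroup of $\pi_1(X,x_0)$ generated by homotopy classes of loops that are concatenations of ``small'' loops $\alpha \ast \beta \ast \bar\alpha$, where $\beta$ lies in a ball of radius $\delta$. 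For a compact (or proper) length space these $\delta$-covers exist for every $\delta>0$ and form an inverse system as $\delta \downarrow 0$.

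\smallskip

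The central topological step is to show that this inverse system stabilizes, i.e.\ that there exists $\delta_0>0$ with $\tilde X^{\delta}=\tilde X^{\delta_0}$ for all $\delta\le\delta_0$, and that the resulting space is simply connected and hence a genuine universal cover. In the Ricci-limit case this is where Sormani--Wei invoke a ``small loops'' or local geometric control coming from the volume/metric structure of the limit. For $\RCD^*(K,N)$-spaces the plan is to replace the limit-space arguments with intrinsic properties: I would use the local structure theory of $\RCD^*(K,N)$-spaces (existence of a well-behaved local geometry, measured-Gromov--Hausdorff rescaled limits being metric cones/Euclidean factors at a.e.\ point, and uniform local doubling) to produce the needed uniform control on the size of nontrivial small loops, thereby forcing stabilization of the $\delta$-covers. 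Concretely, one argues that the covering maps $\tilde X^{\delta'}\to \tilde X^{\delta}$ are eventually homeomorphisms, so the inverse limit is attained and is the universal cover $\tilde X$.

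\smallskip

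The geometric inheritance step is then to upgrade $\tilde X$ from a mere topological universal cover to an $\RCD^*(K,N)$-space. Here I would lift the distance $\sfd$ to the length distance $\tilde\sfd$ on $\tilde X$ making the covering projection $p\colon \tilde X\to X$ a local isometry, and lift the measure $\mm$ to $\tilde\mm$ so that $p$ is locally measure-preserving; since the deck transformation group acts by measure-preserving isometries, $\tilde\mm$ is well defined and $p_\ast\tilde\mm$ agrees locally with $\mm$. The decisive point is that the $\RCD^*(K,N)$ condition is \emph{local}: the reduced curvature-dimension condition $\CD^*(K,N)$ enjoys the local-to-global property of Bacher--Sturm, and infinitesimal Hilbertianity is a local property of the Sobolev space that is preserved under local isometry. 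Thus $(\tilde X,\tilde\sfd,\tilde\mm)$ satisfies $\CD^*(K,N)$ locally, hence globally, and is infinitesimally Hilbertian, so it is $\RCD^*(K,N)$.

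\smallskip

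I expect the main obstacle to be the topological stabilization step, not the geometric inheritance. Infinitesimal Hilbertianity and the $\CD^*(K,N)$ local-to-global property give the $\RCD^*$ structure of the cover rather cleanly once the cover exists and $p$ is a local isometry. The genuinely hard part is establishing, \emph{without} assuming semi-local simple connectedness, that the $\delta$-covers exist and stabilize: one must rule out pathological ``infinitely small'' topology using only the intrinsic volume and metric control furnished by the $\RCD^*(K,N)$ hypothesis. This is precisely where the quantitative geometric consequences of the curvature bound (Bishop--Gromov, local compactness, and the fine local structure of $\RCD^*$-spaces) must be brought to bear to replicate, in the purely synthetic setting, the control that \cite{SW1,SW2} extract from the Ricci-limit structure.
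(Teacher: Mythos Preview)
Your overall architecture matches the paper's: obtain the universal cover via the Sormani--Wei stabilization criterion for $\delta$-covers, then transfer the $\RCD^*(K,N)$ structure using the local-to-global property of $\CD^*(K,N)$ together with the locality of infinitesimal Hilbertianity. The geometric inheritance paragraph is essentially the content of Lemma~\ref{lem:RCDcover}. There are, however, one error and two genuine gaps in the topological step.

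\smallskip
\textbf{Error.} The universal cover is \emph{not} shown (nor claimed) to be simply connected. In this context ``universal cover'' is defined categorically as a cover dominating every other cover; simple connectedness would follow only under semi-local simple connectedness of $X$, which is exactly what one wants to avoid assuming. This is why the paper works with the \emph{revised} fundamental group $\bar\pi_1(X)$, a quotient of $\pi_1(X)$.

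\smallskip
\textbf{Main gap.} You correctly isolate stabilization of the $\delta$-covers as the crux and correctly bring in the a.e.\ Euclidean tangent cones (Theorem~\ref{RCD-reg}), but you do not say \emph{how} a Euclidean tangent at a regular point $x$ prevents arbitrarily short nontrivial loops near $x$. Doubling and blow-up structure alone do not yield this. The missing ingredient is the Abresch--Gromoll excess estimate (Theorem~\ref{AG-ineq}), applied \emph{on the $\delta$-cover} $\tilde X^\delta$ (which is itself $\RCD^*(K,N)$ by the local-to-global argument). The mechanism is: if $B_r(x)$ fails to lift isometrically for a sequence $r_i\downarrow 0$, one finds nontrivial deck transformations $g_i$ with $\sfd_i:=\tilde\sfd(\tilde x_i,g_i\tilde x_i)\to 0$; since the tangent cone at $x$ is $\R^k$, one can locate a point $\tilde{\bar x}_i$ at distance roughly $2S\sfd_i$ from the midpoint of a minimizing geodesic $\tilde\gamma_i$ from $\tilde x_i$ to $g_i\tilde x_i$ with $\tilde\sfd(\tilde x_i,\tilde{\bar x}_i),\tilde\sfd(g_i\tilde x_i,\tilde{\bar x}_i)\ge (\tfrac12+2S)\sfd_i$, forcing excess $\ge 4S\sfd_i$; Abresch--Gromoll gives excess $\le C(K,N)(3S)^{1+\alpha}\sfd_i$, a contradiction for $S$ small (Theorem~\ref{thm:LocStab}). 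This single-regular-point control is then combined with Bishop--Gromov and a packing argument to get global stabilization.

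\smallskip
\textbf{Non-compact technicality.} Global stabilization $\tilde X^\delta=\tilde X^{\delta_0}$ can fail for non-compact $X$ (loops may slide to infinity, e.g.\ a cylinder pinched to a cusp). The paper therefore passes to \emph{relative} $\delta$-covers $\tilde B(x,r,R)^\delta$ and invokes the criterion of Theorem~\ref{univstab}: it suffices that for every $r$ there is $R\ge r$ with $\tilde B(x,r,R)^\delta$ stable for small $\delta$. Your proposal, as written, only addresses the compact picture; since the theorem covers arbitrary $\RCD^*(K,N)$-spaces, this localization is not optional.
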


To prove the result we use the  criterions for the existence of universal covers for metric spaces established in  \cite{SW1, SW2}, see Theorem~\ref{thm:SWUniCov} and Theorem \ref{univstab} below.  To achieve such criterions  we will  take advantage of  the recent excess estimate and structure results established for $\RCD^*(K,N)$-spaces in \cite{GigliMosconi, MNRect}, see Theorem~\ref{AG-ineq} and Theorem \ref{RCD-reg} below. Let us stress that the proof presented here for $\RCD^{*}(K,N)$-spaces is more streamlined than the one in \cite{SW1, SW2} for Ricci limit spaces, as  we can work  intrinsically  on the space without going back and forth to the approximating smooth sequence (for more details about the simplifications see Remark \ref{rem:simplificationsCompact} and Remark \ref{rem:SimplicationNonCompact}). 
\\Let us now discuss some applications of Theorem \ref{thm:UnCovRCD-noncompactIntro}. Before stating them, recall that the \emph{revised fundamental group} is by definition the group of deck transformations of the universal cover, and is a quotient of the standard fundamental group (it coincides with the standard fundamental group if the universal cover is simply connected, a fact which is not clear already for the Ricci-limit spaces). A first application is a rather direct consequence of the compactness of $\RCD^{*}(K,N)$-spaces for $K>0$:

 \begin{theorem}[Theorem \ref{thm:finiteRev}]\label{thm:finiteRevIntro}
Let $(X,\sfd,\mm)$ be an $\RCD^*(K,N)$-space for some $K>0$ and $N\in (1,\infty)$. Then the revised fundamental group $\bar{\pi}_1(X)$ is finite.
\end{theorem}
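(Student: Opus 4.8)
The plan is to obtain finiteness as a direct consequence of the compactness forced by the condition $K>0$, together with the fact---supplied by Theorem~\ref{thm:UnCovRCD-noncompactIntro}---that the universal cover is again an $\RCD^*(K,N)$-space.

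First I would invoke the generalized Bonnet--Myers theorem: since $(X,\sfd,\mm)$ is in particular a $\CD^*(K,N)$-space with $K>0$ and $N<\infty$, it has finite diameter, say $\diam(X)\le \pi\sqrt{(N-1)/K}$, and, being proper (locally compact and complete), it is compact. By Theorem~\ref{thm:UnCovRCD-noncompactIntro} the universal cover $(\tilde X,\tilde\sfd,\tilde\mm)$ is itself an $\RCD^*(K,N)$-space, so the very same diameter bound applies to it; hence $\tilde X$ is compact as well and, in particular, $\tilde\mm(\tilde X)<\infty$. Moreover, since $\RCD$-spaces carry a reference measure of full support, $\mm(B)>0$ for every ball $B$ of positive radius.

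Next I would exploit that, by definition, $\bar\pi_1(X)$ is the group of deck transformations of the covering $p\colon\tilde X\to X$, acting simply transitively on each fibre $p^{-1}(x)$, so that its cardinality equals the number of sheets $\#p^{-1}(x)$. Since $p$ is a local isometry of metric measure spaces (this is how the $\RCD$-structure is transported onto $\tilde X$ in the proof of Theorem~\ref{thm:UnCovRCD-noncompactIntro}), I may fix $x\in X$ and a radius $\eps>0$ so small that $B(x,\eps)$ is evenly covered: its preimage is the disjoint union of balls $B(\tilde x_i,\eps)$, one for each $\tilde x_i\in p^{-1}(x)$, each isometric to $B(x,\eps)$ and hence of measure $\mm(B(x,\eps))>0$. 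Using disjointness inside $\tilde X$,
\[
|\bar\pi_1(X)|\cdot \mm(B(x,\eps))\;=\;\#p^{-1}(x)\cdot\mm(B(x,\eps))\;\le\;\tilde\mm(\tilde X)\;<\;\infty,
\]
and since $\mm(B(x,\eps))>0$ this bounds the order of $\bar\pi_1(X)$, proving finiteness.

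The compactness and covering-space inputs are standard once the main theorem is available, so I expect no real difficulty here. The single point that deserves care is the assertion that $p$ transports \emph{both} the distance and the reference measure, so that the lifted balls genuinely share the positive measure $\mm(B(x,\eps))$ and the volume-counting estimate is legitimate; I anticipate this measure-transport property to be precisely what is recorded in the construction of the $\RCD$-structure on $\tilde X$ during the proof of Theorem~\ref{thm:UnCovRCD-noncompactIntro}, so it can simply be cited rather than reproved.
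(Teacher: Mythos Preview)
Your proposal is correct and follows essentially the same route as the paper: invoke the main theorem to get that the universal cover $(\tilde X,\tilde\sfd,\tilde\mm)$ is again $\RCD^*(K,N)$ with $K>0$, apply the Bonnet--Myers diameter bound to conclude $\tilde X$ is compact, and deduce finiteness of the deck transformation group. The only cosmetic difference is in the last step: the paper argues purely metrically (infinitely many $g_i\tilde x$ would give an infinite discrete subset of a compact space), whereas you use the finite total measure of $\tilde X$ together with the positive measure of the disjoint lifted balls; both are equally valid and equally short.
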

Let us mention that the above result  was proved in \cite{BS2010} for non-branching $\CD^*(K,N)$-spaces  under the extra assumption of semi-local simple connectedness; recall that the extra assumption of semi-local simply connectedness ensures automatically the existence of a simply connected universal cover, so that the fundamental group $\pi_1(X)$ and the revised fundamental group $\bar{\pi}_1(X)$ coincide. On the other hand both the semi-local simple connectedness and the non-branching assumptions are not stable under $mGH$-convergence, while the $\RCD^{*}(K,N)$ condition is.

A second application is the extension to compact $\RCD^{*}(0,N)$-spaces of a celebrated result of Cheeger-Gromoll  \cite{ChGr} about the fundamental group of a  compact manifold  with non-negative Ricci curvature. The extension seems new even for Ricci-limit spaces. 

\begin{theorem}[Theorem \ref{thm:ChGrpi1}]\label{thm:ChGrpi1Intro}
Let $(X,\sfd,\mm)$ be a compact  $\RCD^{*}(0,N)$-space for some $N \in (1,\infty)$. Then  the revised fundamental group $\bar{\pi}_1(X)$ contains a finite normal subgroup $\psi \lhd \bar{\pi}_1(X)$ such that $ \bar{\pi}_1(X)/\psi$ is a finite group extended by $\underbrace{\Z\oplus \ldots \oplus \Z}_{k \text{ times}}$ and the universal cover $(\tilde{X},\tilde{\sfd}, \tilde{\mm})$ splits isomorphically as m.m.s. as  
$$(\tilde{X}, \tilde{\sfd}, \tilde{\mm}) \simeq (\bar{X}\times \R^{k}, \sfd_{\bar{X}\times \R^{k}}, \mm_{\bar{X}\times \R^{k}}),$$ 
where $(\bar{X}, \sfd_{\bar{X}}, \mm_{\bar{X}})$ is a \emph{compact}   $\RCD^{*}(0,N-k)$-space.
\end{theorem}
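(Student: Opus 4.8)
The plan is to mimic the classical argument of Cheeger--Gromoll, replacing the smooth splitting theorem by its $\RCD$ counterpart. First I would invoke Theorem~\ref{thm:UnCovRCD-noncompactIntro} to know that the universal cover $(\tilde{X},\tilde{\sfd},\tilde{\mm})$ is itself an $\RCD^{*}(0,N)$-space; since $\tilde{X}\to X$ is a covering and $X$ is compact, the revised fundamental group $\Gamma:=\bar{\pi}_1(X)$ acts on $\tilde{X}$ by measure-preserving isometries, freely, properly discontinuously and \emph{cocompactly}. I would then apply Gigli's splitting theorem for $\RCD^{*}(0,N)$-spaces iteratively: each time $\tilde{X}$ contains a line one splits off an $\R$-factor and drops the dimension parameter by one, so after finitely many steps one reaches a maximal Euclidean factor
\[
(\tilde{X},\tilde{\sfd},\tilde{\mm})\simeq (\bar{X}\times \R^{k}, \sfd_{\bar{X}\times\R^{k}}, \mm_{\bar{X}\times\R^{k}}),
\]
where $(\bar{X},\sfd_{\bar{X}},\mm_{\bar{X}})$ is an $\RCD^{*}(0,N-k)$-space containing \emph{no} line.

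The next step is to transport the $\Gamma$-action through this splitting. Because $\bar{X}$ has no line, the Euclidean factor is canonical (it is detected invariantly, e.g.\ by the affine functions with parallel gradient on $\tilde{X}$), hence every isometry of $\tilde{X}$ preserves the product structure and splits as $\mathrm{Isom}(\tilde{X})=\mathrm{Isom}(\bar{X})\times\mathrm{Isom}(\R^{k})$. Writing $g=(g_1,g_2)$ for $g\in\Gamma$ one obtains projection homomorphisms onto $\mathrm{Isom}(\bar{X})$ and onto $\mathrm{Isom}(\R^{k})$; a short argument using a compact fundamental domain (and the fact that each factor is preserved) shows that both projected groups still act cocompactly on the respective factors. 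I would then prove that $\bar{X}$ is compact by the Cheeger--Gromoll ``line production'' trick: if $\bar{X}$ were noncompact it would contain a ray, and using the cocompact action together with the properness of $\RCD^{*}(0,N-k)$-spaces one could recenter long geodesic segments and pass to a limit line, contradicting the maximality of $k$.

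Finally I would read off the group structure. Let $\psi:=\ker\big(\Gamma\to\mathrm{Isom}(\R^{k})\big)$, i.e.\ the deck transformations acting as the identity on the Euclidean factor. These preserve each slice $\bar{X}\times\{v\}$ and act freely and properly discontinuously on the \emph{compact} space $\bar{X}$, so $\psi$ is finite; being a kernel it is normal in $\Gamma$. The quotient $\Gamma/\psi$ embeds as a subgroup $\Gamma_2\subset\mathrm{Isom}(\R^{k})$; applying the proper discontinuity of $\Gamma$ on $\tilde{X}$ to sets of the form $\bar{X}\times C$ with $C\subset\R^{k}$ compact (here one uses that $\bar{X}$ is compact and preserved factor-wise) shows that $\Gamma_2$ acts properly discontinuously on $\R^{k}$, while the previous step gives cocompactness; hence $\Gamma_2$ is a crystallographic group. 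Bieberbach's theorem then provides a finite-index normal subgroup of $\Gamma_2$ isomorphic to the translation lattice $\underbrace{\Z\oplus\dots\oplus\Z}_{k\text{ times}}$, which is exactly the assertion that $\bar{\pi}_1(X)/\psi$ is a finite group extended by $\Z^{k}$.

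I expect the main obstacle to be the second paragraph: making rigorous both the \emph{canonicity of the maximal Euclidean factor} in the $\RCD$ setting (so that deck transformations genuinely split as products, with no ``twisting'' between $\bar{X}$ and $\R^{k}$) and the compactness of $\bar{X}$. The splitting of the isometry group relies on the invariant description of the flat directions, and the line-production argument requires the full strength of properness together with the splitting theorem applied inside $\bar{X}$; the remaining steps (finiteness of $\psi$, proper discontinuity and cocompactness of $\Gamma_2$, Bieberbach) are comparatively routine once these structural facts are in place.
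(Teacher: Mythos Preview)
Your proposal is correct and follows essentially the same Cheeger--Gromoll strategy as the paper: existence of the universal cover as an $\RCD^{*}(0,N)$-space, iterated application of Gigli's splitting theorem to obtain $\tilde X\simeq\bar X\times\R^{k}$ with $\bar X$ compact, splitting of the isometry group along the product (the paper cites \cite{ShWei} for this in the non-smooth setting), and Bieberbach on the Euclidean factor. The only minor rearrangements are that the paper establishes compactness of $\bar X$ \emph{during} the iteration rather than after a maximal splitting, and proves finiteness of $\psi$ via a measure argument on $\bar X$ (its Step~2) rather than your more direct observation that $\psi$ acts freely and properly discontinuously on the compact space $\bar X$.
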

Let us mention that  the proof  follows the main steps of  \cite{ChGr}  (a fundamental tool is the Splitting Theorem  \ref{thm:Split} which was proved by  Cheeger-Gromoll \cite{ChGr} for smooth manifolds, by Cheeger-Colding \cite{Cheeger-Colding96} for Ricci limits, and by Gigli \cite{Gigli13}  in the $\RCD^{*}(0,N)$ setting)  but in a few  technical points the arguments are slightly adjusted to the non-smooth setting.
%\begin{itemize}
%\item  First of all in order to construct a line or a ray we cannot just take a sequence of tangent vectors of length one at a fixed point and argue by compactness on the unit sphere, but the construction of lines and rays is performed directly on the  space via a diagonal procedure  which makes use that $\RCD^{*}(K,N)$-spaces are proper;
%\item A second point where smoothness was used was in order  to say that if the universal cover is the product of a compact manifold  times $\R^{k}$, then one can find a compact cover of the original manifold which projects to a smooth quotient of $\R^{k}$, which must then be a flat torus. Here we cannot assume that the last  projected space is smooth (since even the original space was not smooth), but we arrive to the same conclusion by analyzing the action of the deck transformations.
%\end{itemize}
A remarkable consequence of the previous result is the following rigidity statement; to this aim denote by $\lfloor N \rfloor$ the integer part of a real number $N\in (1,\infty)$.
\begin{corollary}[Corollary \ref{cor:Rigid}] 
Let $(X,\sfd,\mm)$ be a compact  $\RCD^{*}(0,N)$-space for some $N \in (1,\infty)$. If the revised fundamental group $\bar{\pi}_{1}(X)$ contains $\lfloor N \rfloor$ independent generators of infinite order, then $(X,\sfd,\mm)$ is isomorphic as m.m.s. to an $N$-dimensional flat Riemannian manifold, i.e. there exists a covering space $(\hat{X},\hat{\sfd},\hat{\mm})$ of $(X,\sfd,\mm)$ which is isomorphic as m.m.s. to a flat torus ${\mathbb T}^{ \lfloor N \rfloor }=\R^{ \lfloor N \rfloor }/\Gamma$, for some lattice $\Gamma\subset \R^{ \lfloor N \rfloor }$.
\end{corollary}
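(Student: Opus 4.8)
The plan is to deduce the Corollary directly from the splitting result Theorem~\ref{thm:ChGrpi1Intro}, reducing everything to classical Bieberbach theory once the universal cover has been identified with a Euclidean space. First I would apply Theorem~\ref{thm:ChGrpi1Intro} to the compact $\RCD^*(0,N)$-space $(X,\sfd,\mm)$, obtaining a finite normal subgroup $\psi \lhd \bar\pi_1(X)$ with $\bar\pi_1(X)/\psi$ a finite extension of $\Z^k$, together with a splitting of the universal cover $(\tilde X,\tilde\sfd,\tilde\mm) \simeq (\bar X\times\R^k, \sfd_{\bar X\times\R^k}, \mm_{\bar X\times\R^k})$, where $(\bar X,\sfd_{\bar X},\mm_{\bar X})$ is a compact $\RCD^*(0,N-k)$-space.

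The first key step is to pin down $k$. On the one hand, the hypothesis provides a free abelian subgroup $A\cong\Z^{\lfloor N\rfloor}\leq\bar\pi_1(X)$; since $\psi$ is finite and $A$ is torsion-free, $A\cap\psi=\{e\}$, so $A$ injects into the virtually-$\Z^k$ group $\bar\pi_1(X)/\psi$, whose free abelian subgroups have rank at most $k$. Hence $\lfloor N\rfloor\leq k$. On the other hand, $\bar X$ being an $\RCD^*(0,N-k)$-space forces $N-k\geq 0$, i.e.\ $k\leq N$, and since $k$ is an integer $k\leq\lfloor N\rfloor$. Therefore $k=\lfloor N\rfloor$.

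With $k=\lfloor N\rfloor$ the compact factor $\bar X$ is an $\RCD^*(0,N-\lfloor N\rfloor)$-space with $N-\lfloor N\rfloor\in[0,1)$. The second --- and I expect the most delicate --- step is to show that such a factor must be a single point: a curvature-dimension bound with effective dimension strictly below $1$ is incompatible with the presence of a nondegenerate geodesic, so $\bar X$ reduces to a point. By the measure splitting $\tilde\mm=\mm_{\bar X}\otimes\mathcal L^{k}$ coming from the structure of the splitting, the universal cover is then isomorphic as a metric measure space to $(\R^{\lfloor N\rfloor},|\cdot|,c\,\mathcal L^{\lfloor N\rfloor})$ for some constant $c>0$, i.e.\ to flat $\R^{\lfloor N\rfloor}$ up to normalization.

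It then remains to descend to $X$. The revised fundamental group $\bar\pi_1(X)$ acts on $\tilde X=\R^{\lfloor N\rfloor}$ by deck transformations, hence \emph{freely}, properly discontinuously, cocompactly (as $X$ is compact) and by isometries; thus $\bar\pi_1(X)$ is a Bieberbach group and $X\simeq\R^{\lfloor N\rfloor}/\bar\pi_1(X)$ is a compact flat Riemannian manifold of dimension $\lfloor N\rfloor$. Finally, Bieberbach's theorems supply the translational lattice $\Gamma\cong\Z^{\lfloor N\rfloor}$, a normal subgroup of finite index in $\bar\pi_1(X)$; the intermediate cover $\hat X:=\R^{\lfloor N\rfloor}/\Gamma$ is a finite-sheeted covering space of $X$ which is isomorphic as m.m.s.\ to the flat torus $\mathbb T^{\lfloor N\rfloor}=\R^{\lfloor N\rfloor}/\Gamma$, as claimed. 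The only genuinely non-classical input is the vanishing of the low-dimensional factor $\bar X$; once the identification $\tilde X\simeq\R^{\lfloor N\rfloor}$ is in place, everything reduces to classical flat geometry.
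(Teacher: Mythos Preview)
Your proposal is correct and follows essentially the same route as the paper: both arguments use the splitting structure of Theorem~\ref{thm:ChGrpi1} to force $k=\lfloor N\rfloor$, conclude that the compact factor $\bar X$ is an $\RCD^{*}(0,N-\lfloor N\rfloor)$-space with $N-\lfloor N\rfloor<1$ and hence a point, and then identify $\tilde X$ with $\R^{\lfloor N\rfloor}$. The paper's proof is terser---it simply says that Step~1 of the proof of Theorem~\ref{thm:ChGrpi1} can be repeated $\lfloor N\rfloor$ times and that ``the thesis follows''---whereas you spell out the group-theoretic reason for $k\geq\lfloor N\rfloor$ (injecting $\Z^{\lfloor N\rfloor}$ into $\bar\pi_1(X)/\psi$) and the Bieberbach descent to the torus cover; these are exactly the details the paper leaves implicit.
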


A third application is the extension to $\RCD^*(0,N)$-spaces of Milnor's result   \cite{Mil} about the polynomial growth of finitely generated subgroups of the fundamental group of a non-compact manifold with non-negative Ricci curvature.

\begin{theorem}[Theorem \ref{thm:Milnor}]
Let $(X,\sfd, \mm)$ be an $\RCD^*(0,N)$-space for some $N\in(1,\infty)$. Then any finitely generated subgroup of the revised fundamental group $\bar{\pi}_1(X)$ has polynomial growth of degree at most $N$.
\end{theorem}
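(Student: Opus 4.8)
The plan is to transplant Milnor's classical volume-comparison argument \cite{Mil} to the synthetic setting, exploiting that by Theorem~\ref{thm:UnCovRCD-noncompactIntro} the universal cover $(\tilde{X},\tilde{\sfd},\tilde{\mm})$ is itself an $\RCD^*(0,N)$-space and that the revised fundamental group $\bar{\pi}_1(X)$ acts on it as its deck transformation group by measure-preserving isometries. Let $\Gamma\leq\bar{\pi}_1(X)$ be a finitely generated subgroup, fix a symmetric generating set $\{g_1,\dots,g_k\}$, and fix a base point $p\in\tilde{X}$. Writing $|g|$ for the word length of $g\in\Gamma$, the growth function is $\Gamma(s):=\#\{g\in\Gamma:\ |g|\leq s\}$; since the degree of polynomial growth of a finitely generated group is independent of the chosen generating set, it suffices to bound $\Gamma(s)$ for this particular set.

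First I would pass from word growth to orbit growth. Setting $D:=\max_{i}\tilde{\sfd}(p,g_i\cdot p)$, the triangle inequality together with the fact that each $g_i$ acts as an isometry gives $\tilde{\sfd}(p,g\cdot p)\leq s\,D$ whenever $|g|\leq s$; hence the orbit points $\{g\cdot p:\ |g|\leq s\}$ all lie in the ball $B(p,sD)$. Next, since $\tilde{X}\to X$ is a genuine (connected) covering map, the deck group acts freely and properly discontinuously, so there exists $\varepsilon>0$ with $\tilde{\sfd}(p,g\cdot p)\geq 2\varepsilon$ for every $g\neq e$; using again that the elements of $\Gamma$ are isometries, this makes the balls $\{B(g\cdot p,\varepsilon)\}_{g\in\Gamma}$ pairwise disjoint.

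Now I would count volume. The balls $B(g\cdot p,\varepsilon)$ with $|g|\leq s$ are pairwise disjoint and contained in $B(p,sD+\varepsilon)$, and because each $g$ preserves $\tilde{\mm}$ they all carry the same positive mass $v:=\tilde{\mm}(B(p,\varepsilon))>0$ (positivity of the measure of balls being part of the $\RCD$ structure). Therefore
\begin{equation}
\Gamma(s)\,\cdot v\ \leq\ \tilde{\mm}\big(B(p,sD+\varepsilon)\big).
\end{equation}
The Bishop--Gromov inequality for $\RCD^*(0,N)$-spaces (see \cite{CavaSturm}) yields that $r\mapsto\tilde{\mm}(B(p,r))/r^{N}$ is non-increasing, whence $\tilde{\mm}(B(p,sD+\varepsilon))\leq v\,(sD+\varepsilon)^N/\varepsilon^N$. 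Combining the two estimates gives
\begin{equation}
\Gamma(s)\ \leq\ \Big(\tfrac{sD+\varepsilon}{\varepsilon}\Big)^{N},
\end{equation}
a polynomial in $s$ of degree $N$, which proves the claim.

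The steps above are routine once the relevant structure is available; the two points that genuinely lean on the theory developed here, and where I expect the real work to lie, are the following. The first is verifying that the deck transformations of the Sormani--Wei universal cover act as \emph{measure-preserving} isometries of $(\tilde{X},\tilde{\sfd},\tilde{\mm})$, since this is exactly what forces the balls $B(g\cdot p,\varepsilon)$ to have equal mass and is a property of the specific lift of $\mm$ supplied by Theorem~\ref{thm:UnCovRCD-noncompactIntro}. The second, more delicate, is extracting a \emph{uniform} separation radius $\varepsilon>0$ for the orbit: although proper discontinuity of a covering action is automatic, one must check that an evenly-covered neighborhood of the image of $p$ contains a genuine metric ball, which is guaranteed by the properness (local compactness) of $\RCD^*(0,N)$-spaces. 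Once these structural facts are secured, the volume comparison closes the argument with no further difficulty.
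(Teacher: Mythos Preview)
Your proposal is correct and follows essentially the same route as the paper: transplant Milnor's packing argument to the universal cover $(\tilde{X},\tilde{\sfd},\tilde{\mm})$, use that deck transformations act as measure-preserving isometries so that the translated $\varepsilon$-balls are disjoint and of equal mass, and conclude via the Bishop--Gromov inequality \eqref{eq:BGK0}. The only cosmetic difference is that the paper fixes the separation radius explicitly as $\varepsilon=\tfrac{1}{3}\min_i L_i$ (with $L_i$ the length of a loop representing $g_i$), whereas you extract $\varepsilon$ from proper discontinuity of the covering action; your formulation is arguably cleaner, since it makes transparent why such an $\varepsilon$ exists without reference to any particular representatives.
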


Building on the proof of the last result, one can extend to   $\RCD^*(0,N)$-spaces also Anderson's Theorem \cite{An} about maximal volume growth and finiteness of the  revised
fundamental group.

\begin{theorem}[Theorem \ref{thm:Anderson}]
Let $(X,\sfd,\mm)$ be an $\RCD^*(0,N)$-space for some $N \in (1,\infty)$ and assume that it has Euclidean volume growth, i.e. $\liminf_{r\to +\infty} \mm(B_{x_{0}}(r))/r^N=C_{X}>0$. Then the revised fundamental group is finite. More precisely, it holds  $$|\bar{\pi}_1(X,x_0)|\leq \tilde{\mm}(B_{1}(\tilde{x}))/C_{X},$$
where $(\tilde{X}, \tilde{\sfd}, \tilde{\mm})$ is the universal cover of $(X,\sfd,\mm)$ and $\tilde{x}\in \tilde{X}$ is a fixed reference point.
 \end{theorem}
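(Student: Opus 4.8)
The plan is to combine the Bishop--Gromov volume comparison on $X$ and on its universal cover $(\tilde{X},\tilde{\sfd},\tilde{\mm})$---which is again $\RCD^*(0,N)$ by Theorem~\ref{thm:UnCovRCD-noncompactIntro}---with the fact that the revised fundamental group $\Gamma:=\bar{\pi}_1(X,x_0)$ acts on $\tilde{X}$ by measure-preserving isometries (the deck transformations) with quotient $(X,\sfd,\mm)$. First I would record the two comparison estimates. Downstairs, since $r\mapsto \mm(B_{x_0}(r))/r^N$ is non-increasing, its limit as $r\to\infty$ equals its infimum, so the Euclidean-growth hypothesis gives $\mm(B_{x_0}(\rho))\ge C_X\rho^N$ for every $\rho>0$. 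Upstairs, the same monotonicity of $R\mapsto\tilde{\mm}(B_{\tilde{x}}(R))/R^N$ yields $\tilde{\mm}(B_{\tilde{x}}(R))\le \tilde{\mm}(B_1(\tilde{x}))\,R^N$ for all $R\ge 1$. Note the right-hand side is finite and $C_X>0$, so the target quantity $\tilde{\mm}(B_1(\tilde{x}))/C_X$ is a genuine finite number.

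The core of the argument is a packing estimate that yields finiteness and the sharp constant at once. Fix an arbitrary finite subset $F\subseteq\Gamma$ and set $D_F:=\max_{\gamma\in F}\tilde{\sfd}(\tilde{x},\gamma\tilde{x})<\infty$. Let $\Omega\subseteq\tilde{X}$ be the Dirichlet fundamental domain centered at $\tilde{x}$, i.e. the set of points at least as close to $\tilde{x}$ as to any other point of the orbit $\Gamma\tilde{x}$. Because the covering projection $\pi$ is a local isometry and locally measure-preserving, and because every $y\in B_{x_0}(\rho)$ has a nearest lift lying in $\Omega$ at distance exactly $\sfd(x_0,y)<\rho$, the set $\Omega\cap B_{\tilde{x}}(\rho)$ projects onto $B_{x_0}(\rho)$ while $\pi$ is injective on it up to an $\tilde{\mm}$-negligible set; hence $\tilde{\mm}\big(\Omega\cap B_{\tilde{x}}(\rho)\big)=\mm(B_{x_0}(\rho))$. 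The translates $\{\gamma(\Omega\cap B_{\tilde{x}}(\rho))\}_{\gamma\in F}$ are pairwise disjoint up to null sets, each has the same measure $\mm(B_{x_0}(\rho))$ since $\gamma$ is measure-preserving, and each is contained in $B_{\tilde{x}}(\rho+D_F)$. Therefore
$$|F|\,\mm(B_{x_0}(\rho))\ \le\ \tilde{\mm}\big(B_{\tilde{x}}(\rho+D_F)\big).$$

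Inserting the two comparison estimates gives $|F|\,C_X\rho^N\le \tilde{\mm}(B_1(\tilde{x}))\,(\rho+D_F)^N$, and---crucially---$D_F$ is fixed while $\rho$ is free, so dividing by $\rho^N$ and letting $\rho\to\infty$ yields $|F|\le \tilde{\mm}(B_1(\tilde{x}))/C_X$. Since $F$ was an arbitrary finite subset of $\Gamma$, taking the supremum over $F$ shows that $\Gamma$ is finite and satisfies $|\bar{\pi}_1(X,x_0)|\le \tilde{\mm}(B_1(\tilde{x}))/C_X$, which is exactly the claim.

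The step I expect to require the most care is the measure-theoretic bookkeeping in the non-smooth setting: namely that $\pi$ may be taken to be a genuine local isometry which is locally measure-preserving (so that $\tilde{\mm}$ is the lift of $\mm$ and the deck transformations preserve $\tilde{\mm}$), and that the boundary of the Dirichlet domain $\Omega$---the locus of points equidistant from two distinct orbit points---is $\tilde{\mm}$-negligible, so that $\pi|_\Omega$ is an almost-everywhere bijection onto $X$ and the identity $\tilde{\mm}(\Omega\cap B_{\tilde{x}}(\rho))=\mm(B_{x_0}(\rho))$ holds. Both facts should follow from $\tilde{X}$ being $\RCD^*(0,N)$---in particular from it being a geodesic space with locally finite measure in which minimizing geodesics are essentially unique---together with the construction of the universal cover in Theorem~\ref{thm:UnCovRCD-noncompactIntro}; the existence and lifting of minimizing geodesics is precisely what guarantees that the nearest lift of each $y\in B_{x_0}(\rho)$ lands in $\Omega\cap B_{\tilde{x}}(\rho)$.
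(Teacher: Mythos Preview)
Your proof is correct and shares its core mechanism with the paper---a fundamental-domain packing argument on the universal cover, combined with Bishop--Gromov upstairs and Euclidean growth downstairs---but you organize it more efficiently. The paper proceeds in two steps: first it shows that any finitely generated subgroup $\hat G\le\bar\pi_1(X)$ is finite via a Milnor-type word-length count (this is what allows it to define $R_{\hat G}:=\max_{g\in\hat G}\tilde{\sfd}(\tilde x,g\tilde x)<\infty$), and only then, in Step~2, runs the fundamental-domain packing to extract the sharp constant $\tilde{\mm}(B_1(\tilde x))/C_X$. You collapse these into one move by applying the packing estimate directly to an arbitrary \emph{finite subset} $F\subseteq\Gamma$, for which $D_F<\infty$ holds trivially; since the resulting bound $|F|\le\tilde{\mm}(B_1(\tilde x))/C_X$ is uniform in $F$, finiteness of $\Gamma$ and the sharp cardinality estimate follow simultaneously. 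This is a genuine streamlining of the paper's argument. The technicality you flag---that the Dirichlet domain $\Omega$ has $\tilde{\mm}$-negligible boundary, so that $\tilde{\mm}(\Omega\cap B_{\tilde x}(\rho))=\mm(B_{x_0}(\rho))$---is handled at the same level of rigor in the paper, which likewise uses a fundamental domain $\tilde X_{\hat G}$ and the identity $\hat{\mm}(B_{\hat x_0}(r))=\tilde{\mm}(B_{\tilde x_0}(r)\cap\tilde X_{\hat G})$ without further justification.
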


A last application is the extension to $\RCD^{*}(0,N)$-spaces of a result of Sormani  \cite{So2}, about loops at infinity.  In order to state it, recall that a length space $(X,\sfd)$ has the loop to infinity property if the following holds: for any element $g\in \bar{\pi}_1(X, x_0)$ and any compact subset $K\subset \subset X$, $g$ has a representative loop of the form 
$\sigma \circ \gamma \circ \sigma^{-1}$ where $\gamma$ is a loop in $X\setminus K$ and $\sigma$ is a curve from $x_0$ to $X\setminus K$.

\begin{theorem}[Theorem \ref{thm:Sormani}]
Let $(X,\sfd,\mm)$ be a non-compact $\RCD^*(0,N)$-space for some $N \in (1, \infty)$. Then either $X$ has the loops at infinity property or the universal cover $(\tilde{X}, \sfd_{\tilde{X}},\mm_{\tilde{X}})$ splits isomorphically as metric measure space, i.e. it is isomorphic to a product  $(Y\times \R, \sfd_{Y\times \R}, \mm_{Y\times \R})$. 
\end{theorem}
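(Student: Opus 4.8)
It suffices to show that if $(X,\sfd)$ fails the loops at infinity property, then the universal cover splits; this establishes the stated dichotomy. By Theorem~\ref{thm:UnCovRCD-noncompactIntro} the universal cover $(\tilde X, \tilde\sfd, \tilde\mm)$ exists and is itself $\RCD^*(0,N)$, hence proper and geodesic. Denote by $\pi\colon\tilde X\to X$ the covering projection, which is a $1$-Lipschitz local isometry, and recall that the revised fundamental group $\bar{\pi}_1(X,x_0)$ acts on $\tilde X$ as a group of measure-preserving isometries (the deck transformations), freely and properly discontinuously.

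First I would reformulate the failure of the property metrically on $\tilde X$. If it fails, there are $g\in\bar{\pi}_1(X,x_0)\setminus\{e\}$ and a compact set which, after enlarging, may be taken to be a closed ball $K=\bar B(x_0,r_0)$, such that $g$ admits no representative $\sigma\circ\gamma\circ\sigma^{-1}$ with $\gamma$ a loop in $X\setminus K$. Writing $\tilde K:=\pi^{-1}(K)$ and letting $\tilde g$ be the deck transformation associated to $g$, I claim this is equivalent to the statement that for every $\tilde y\in\tilde X\setminus\tilde K$ the points $\tilde y$ and $\tilde g\tilde y$ lie in \emph{different} connected components of $\tilde X\setminus\tilde K$. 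Indeed, a path from $\tilde y$ to $\tilde g\tilde y$ inside $\tilde X\setminus\tilde K$ would project to a loop $\gamma\subset X\setminus K$ based at $\pi(\tilde y)$ whose associated deck transformation is $\tilde g$, and conjugating by any curve $\sigma$ from $x_0$ to $\pi(\tilde y)$ would produce exactly the forbidden representative.

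Next I would construct a line in $\tilde X$. Since $X$ is noncompact, complete and proper, fix a ray $r\colon[0,\infty)\to X$ from $x_0$ and lift it to a ray $\tilde r$ issuing from a fixed lift $\tilde x_0$ (lifts of minimizing geodesics are minimizing because $\pi$ is a $1$-Lipschitz local isometry). For $t>r_0$ one has $\tilde r(t)\notin\tilde K$, so by the previous step the minimizing geodesic $\sigma_t$ from $\tilde r(t)$ to $\tilde g\tilde r(t)$ must cross $\tilde K$ at some point $q_t$. The crucial estimate is that, since $\pi$ is distance-nonincreasing and $\pi(\tilde r(t))=\pi(\tilde g\tilde r(t))=r(t)$, every point of $\tilde K$ has $\tilde X$-distance at least $t-r_0$ from both $\tilde r(t)$ and $\tilde g\tilde r(t)$; hence both subarcs of $\sigma_t$ joining $q_t$ to its endpoints have length at least $t-r_0\to\infty$. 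Because $\bar{\pi}_1$ acts cocompactly on $\tilde K=\pi^{-1}(K)$ with $K$ compact, I may apply a deck transformation $\psi_t$ so that $\psi_t q_t$ lies in a fixed compact set, while $\psi_t\sigma_t$ is still a minimizing geodesic whose two subarcs have length tending to $\infty$. Parametrizing $\psi_t\sigma_t$ by arclength with the crossing point at parameter $0$ and letting $t\to\infty$, the Arzel\`a--Ascoli theorem (using properness) yields a locally uniform limit $c_\infty\colon\R\to\tilde X$; minimality of each $\psi_t\sigma_t$ passes to the limit, so $c_\infty$ is a line.

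Finally, $\tilde X$ is $\RCD^*(0,N)$ and contains the line $c_\infty$, so the Splitting Theorem~\ref{thm:Split} gives an isomorphism of metric measure spaces $(\tilde X,\tilde\sfd,\tilde\mm)\simeq(Y\times\R,\sfd_{Y\times\R},\mm_{Y\times\R})$, which is the claim. I expect the main obstacle to be the line construction: one must simultaneously keep the crossing points $q_t$ in a fixed compact set, using the cocompactness of the deck action on $\tilde K$, and guarantee that both halves of the geodesics escape to infinity, so that the limit is a genuine bi-infinite minimizing geodesic rather than a ray or a bounded segment. The metric reformulation of the property is what forces the crossing of $\tilde K$, and the distance-nonincreasing property of $\pi$ is precisely what drives the escape to infinity.
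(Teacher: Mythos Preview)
Your proposal is correct and follows essentially the same route as the paper: from the failure of the loops-at-infinity property for some $g$ and compact $K$, you take minimizing geodesics in $\tilde X$ between $\tilde r(t)$ and $g\tilde r(t)$, show they must cross $\pi^{-1}(K)$, use the deck action to bring the crossing points into a fixed compact set, and pass to a limiting line via Arzel\`a--Ascoli so that the Splitting Theorem applies. Your reformulation in terms of connected components of $\tilde X\setminus\tilde K$ is just a convenient repackaging of the paper's observation that the projected loop $\pi(\sigma_t)$ must intersect $K$; the two statements are equivalent since $\sigma_t$ meets $\pi^{-1}(K)$ iff $\pi(\sigma_t)$ meets $K$. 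One small imprecision: when you write ``conjugating by \emph{any} curve $\sigma$ from $x_0$ to $\pi(\tilde y)$'', you actually need a \emph{suitable} $\sigma$, namely the projection of a path in $\tilde X$ from $\tilde x_0$ to $\tilde y$, to guarantee that $\sigma\circ\gamma\circ\sigma^{-1}$ represents $g$ itself rather than a conjugate; this does not affect the argument.
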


Let us stress that the ones above are the first topological results on $\RCD^{*}(K,N)$-spaces without additional assumptions; nevertheless let us mention that in  \cite{BS2010} and \cite{KitLak}, under the additional non-branching and semi-local simple connectedness assumptions
(the last one automatically ensuring the existence of a simply connected universal cover), interesting results about the fundamental group have been achieved.

\begin{remark}[Open problems]
We stated a number of applications of Theorem \ref{thm:UnCovRCD-noncompactIntro}, but we do not expect of having given an exhaustive list; for instance we expect (some of) the results proved by Shen-Sormani \cite{ShenSormani} and Sormani \cite{So00} to be extendable to $\RCD^{*}(K,N)$ spaces.  Another question which would  be interesting to investigate is the existence (or not) of the universal cover for a (essentially) non-branching  $\CD^{*}(K,N)$ space, since such a condition satisfies the local-to-global property (see \cite{BS2010} and \cite{CMMap}).
\end{remark}

The paper is organized as follows. In Section 2 we review the basic definitions and properties of the various covering spaces and of spaces with lower Ricci curvature bounds  which we will need throughout the paper.  
\\In Section 3 we prove the main Theorem  \ref{thm:UnCovRCD-noncompactIntro} in the case where the space $(X,\sfd)$ is compact  and we establish the fist two applications (Theorem \ref{thm:finiteRevIntro}, Theorem \ref{thm:ChGrpi1Intro}). The proof in the compact case presents almost all the geometric ideas but is slightly less technical than the corresponding non-compact one, this is the main reason we decided to present both.
\\Finally in Section 4 we prove Theorem  \ref{thm:UnCovRCD-noncompactIntro} in full generality and discuss the applications above.
\\

{\bf Acknowledgement}:   This work was done while both  authors were in residence at the 
Mathematical Sciences Research Institute in Berkeley, California during the 
Spring 2016 semester,  supported by the National Science Foundation
under Grant No. DMS-1440140.  We thank the organizers of the Differental Geometry Program and  MSRI for providing great environment for research and collaboration.

\section{Preliminaries}
Throughout the paper $(X,\sfd)$ will be a complete and separable metric space. We will also always assume $(X,\sfd)$ to  be geodesic (i.e. any couple of points is joined by a length minimizing geodesic; actually any proper complete length space is geodesic) and  proper (i.e. closed metric balls are compact; the properness actually, even if not assumed, will follow in any case by the $\CD^{*}(K,N)$ condition, i.e. from the lower bounds on the Ricci curvature and the upper bounds on the dimension). 
We will endow the metric space $(X,\sfd)$ with a $\sigma$-finite Borel positive measure $\mm$; the triple $(X,\sfd,\mm)$ will be called  metric measure space, m.m.s. for short. In order to avoid trivialities we will always assume $\mm(X)>0$.

\subsection{Covering spaces of metric spaces}\label{SS:CovSpaces}
We start by recalling the definition of covering of a metric space \cite[Page 62,80]{Sp}.
\begin{definition}[Universal cover of  a metric space]
Let $(X,\sfd_X)$ be a metric space. We say that the metric space $(Y,\sfd_Y)$ is a \emph{covering space} for $(X, \sfd_X)$ if  there exists a continuous map $\pi: Y \to X$ such that for every point $x \in X$ there exists a neighborhood $U_x \subset X$ with the following  property:  $\pi^{-1}(U_x)$ is  the disjoint union of open subsets of $Y$ each of which is mapped  homeomorphically onto $U_x$ via $\pi$.

We say that a connected metric space $(\tilde{X}, \sfd_{\tilde{X}})$ is a \emph{universal cover} for  $(X,\sfd_X)$ if $(\tilde{X}, \sfd_{\tilde{X}})$ is a covering space  for   $(X, \sfd_X)$ with the following property: for any other covering space $(Y,\sfd_Y)$ of $(X,\sfd_X)$ there exists a continuous map $f:\tilde{X}\to Y$ such that the triangle made with the projection maps onto $X$ commutes.
\end{definition}

Given $x \in X$, we denote with $\pi_1(X,x)$ the fundamental group of $X$ based at $x$. Recall that a metric space $(X,\sfd)$ is said  \emph{semi-locally simply connected} (or semi-locally one connected) if for all $x \in X$ there is a neighborhood $U_x$ of $x$ such that any curve in $U_x$ is contractible in $X$, i.e. $\pi_1(U_x,x)\to  \pi_1(X,x)$ is trivial (cf. \cite[p. 78]{Sp}, \cite[p. 142]{Mas}).  This is weaker than saying that $U_x$ is simply connected.

\begin{remark}[Existence of a universal cover]\label{rem:ExistUC}
A universal cover may not exist in general \cite[Ex. 17, p. 84]{Sp}, however if it exists then it is unique. Furthermore, if a space is locally path connected and semi-locally simply connected
then it has a universal cover and that cover is simply connected \cite[Cor. 14, p. 83]{Sp}. On the other hand, the universal covering space of a locally path connected space may not be simply connected  \cite[Ex. 18, p. 84]{Sp}.
\end{remark}

If a space has a universal cover, then we can consider the revised fundamental group  introduced in \cite{SW1},  which is not as fine as the standard fundamental group but still can give interesting topological information in many situations.
\\First of all we denote by $G(Y,X)$ the \emph{group of deck transforms} (or self equivalences) of a cover $\pi:Y\to X$. This is by definition the
group of homeomorphisms $h: Y \to Y$ such that $\pi\circ h=\pi$ (cf. \cite[p. 85]{Sp}).  

\begin{definition}[Revised fundamental group]
Assume the metric space $(X,\sfd_{X})$ admits a universal cover  $(\tilde{X}, \sfd_{\tilde{X}})$. Then, \cite[Def. 2.4]{SW1},  the \emph{revised fundamental group} of $X$, denoted  with $\bar\pi_1(X)$, is the  group of deck transformations $G(\tilde{X}, X)$.  
\end{definition}

\begin{remark}
\begin{enumerate}
\item Let $\pi:Y\to X$ be a covering and  fix  $p \in X$. Then, for any fixed $\tilde{p}\in \pi^{-1}(p) \subset Y$,  there is a natural surjection $\Psi_{\tilde{p}}: \pi_1(X,p) \to G(Y,X)$ defined as follows.

Note that given $[\gamma] \in \pi_1(X,p)$, we can lift the loop $\gamma$ to a curve based at $\tilde{p}$ in the cover $Y$.  This defines an action of $[\gamma]$ on $\pi^{-1}(p) \subset Y$ which can
be extended uniquely to a deck transform. This map is surjective when $Y$ is path connected because given any $h \in G(Y,X)$ we can join $\tilde{p}$ to $h(\tilde{p})$ by a curve which can be projected to the base space giving an element of  $\pi_1(X,p)$.  
The kernel $H_p$ of $\Psi_{\tilde{p}}: \pi_1(X,p) \to G(Y,X)$, consists of those elements of the fundamental
group  $\pi_1(X,p)$, whose representative loops are still closed when they are lifted to the cover.

\item  $\bar\pi_1(X)$ is canonically isomorphic to $\pi_1(X,p)/H_p$, where $H_p$ is the kernel  of $\Psi_{\tilde{p}}: \pi_1(X,p) \to  G(\tilde{X}, X)$. 

\item When the universal cover is simply connected and locally path connected, then $\bar\pi_1(X)$ is canonically isomorphic to $\pi_1(X,p)$, see \cite[Cor. 4, p. 87]{Sp}.  
\end{enumerate}
\end{remark}

Let us recall that (cf. \cite[p. 81]{Sp}), given an open covering ${\mathcal U}$ of $X$, there exists a covering space $\tilde{X}_{\mathcal U}$ such that $\pi_1(\tilde{X}_{\mathcal U}, \tilde{p})$  is isomorphic to $\pi_1(X,{\mathcal U},p)$, where $\tilde{p} \in \pi^{-1} (p)$ and  $\pi_1(X,{\mathcal U},p)$ is the  normal subgroup of $\pi_1(X, p)$  generated
by homotopy classes of closed paths having a representative of the form $\alpha^{-1} \circ \beta \circ \alpha$, where $\beta$ is a
closed path lying in some element of ${\mathcal U}$ and $\alpha$ is a path from $p$ to $\beta(0)$.  In the following, $\pi_1(\tilde{X}_{\mathcal U}, \tilde{p})$ will be called \emph{covering group} and will be  identified with the normal subgroup  $\pi_1(X,{\mathcal U},p)\subset \pi_1(X,p)$.% \footnote{AM: maybe add a picture here for  $\alpha^{-1} \circ \beta \circ \alpha$}

We also mention that, given a length space $(X,\sfd)$, if the universal cover $\tilde{X}$ exists, then it  is obtained as   covering space $\tilde{X}_{\mathcal{U}}$ associated to a suitable  open cover $\mathcal{U}$ of $X$ satisfying the following property: for every $x\in X$ there exists $U_{x}\in \mathcal{U}$ such that $U_{x}$ is lifted  homeomorphically by any covering space of $(X,\sfd)$.

We now recall the notion of $\delta$-cover introduced in \cite{SW1}. Such objects are a sort of filter at scale $\delta$ for the first fundamental group and are useful in order to investigate the existence of the universal cover. Indeed since in the paper we do not want to assume  semi-local simply connectedness of the spaces, the first non trivial issue in studying the fundamental group  is exactly the existence or not  of  a universal cover.

%\begin{figure}[!hbt]
%	\begin{center}
%		\psfrag{x}{$x$}
%		\psfrag{a}{$\alpha$}
%		\psfrag{b}{$\beta$}
%		\psfrag{c}{Open ball of}
%		\psfrag{d}{radius $\delta>0$}
%		\includegraphics[width=.5\textwidth]{figure7.eps}
%		\caption{A typical generator for $\pi_1(X,x,\delta)$}\label{fig7}
%	\end{center}
%\end{figure}

\begin{definition}[$\delta$-cover]
Let $(X, \sfd)$ be a length metric space and fix $\delta>0$. The \emph{$\delta$-cover} of $X$, denoted  by $\tilde{X}^\delta$, is defined to be $\tilde{X}_{\mathcal U_\delta}$, where $\mathcal{U_{\delta}}$ is the open cover of $X$ consisting of all the balls of radius $\delta$.  The covering group $\pi_1(\tilde{X}_{\mathcal U_\delta}, \tilde{p})$ will be denoted with $\pi_1(X, \delta, p) \subset \pi_1(X,p)$, and the group of deck transformations of $\tilde{X}^\delta$ will be denoted by $G(X,\delta)= \pi_1(X,p)/\pi_1(X, \delta, p)$.
\end{definition}

\begin{remark}
\begin{enumerate}
\item  If $\delta_1\leq \delta_2$ then $\tilde{X}^{\delta_1}$ covers $\tilde{X}^{\delta_2}$. 
\item  The group of deck transformations $G(X, \delta)$,  does not depend on $p$ \cite[Cor. 3, p. 86]{Sp}. One can
think of $G(X, \delta)$ as roughly corresponding to those loops which are not generated by short loops (i.e. of length at most $2\delta$)  in $\pi_1(X,p)$. More precisely, one can think of the $\delta$-cover as an intermediate cover which only unwraps those loops which are not contained in any ball of radius $\delta$.   
\end{enumerate}
\end{remark}

The following result proved by C. Sormani and the second author \cite[Proposition 3.2, Theorem 3.7]{SW1} will be a key technical tool in order to investigate topological properties of $\RCD^*(K,N)$-spaces.

\begin{theorem}[{\cite[Proposition 3.2, Theorem 3.7]{SW1}}]\label{thm:SWUniCov}
Let $(X,\sfd)$ be a compact length metric space. Then $X$ admits  a universal cover if  and only if the $\delta$-covers of $X$ stabilize for small $\delta$, i.e. if there exists $\delta_0>0$ such that $\tilde{X}^\delta$ is isometric to 
$\tilde{X}^{\delta_0}$ for every $\delta\in (0,\delta_0]$;  moreover, in this case, the universal cover is isometric to $\tilde{X}^{\delta_0}$. 
\end{theorem}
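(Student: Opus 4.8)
The plan is to prove both implications of the equivalence and then identify the stabilized cover with the universal cover. For the backward direction, suppose the $\delta$-covers stabilize, so that there exists $\delta_0>0$ with $\tilde{X}^\delta$ isometric to $\tilde{X}^{\delta_0}$ for all $\delta\in(0,\delta_0]$. The strategy is to show that $\tilde{X}^{\delta_0}$ is itself a universal cover. The key observation is that because $X$ is compact and $\tilde{X}^{\delta_0}$ is a $\delta_0$-cover, every ball of radius $\delta_0$ in $X$ lifts homeomorphically to $\tilde{X}^{\delta_0}$; thus $\tilde{X}^{\delta_0}$ has a uniform scale of local triviality. I would then take an arbitrary covering space $(Y,\sfd_Y)$ of $X$ and construct the required map $f:\tilde{X}^{\delta_0}\to Y$. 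To do this I would use the Lebesgue number of the covering of $X$ by evenly-covered neighborhoods coming from $Y$: by compactness there is some small scale $\eta>0$ such that every $\eta$-ball lifts to $Y$. Since the $\delta$-covers stabilize, $\tilde{X}^{\delta_0}\cong\tilde{X}^{\eta}$ (for $\eta\le\delta_0$), and the $\eta$-cover is by construction the finest cover that only unwraps loops not contained in $\eta$-balls. Hence every loop killed by passing to $\tilde{X}^{\eta}$ is a product of loops lying in $\eta$-balls, each of which lifts as a loop to $Y$; this shows $\pi_1(X,\eta,p)\subseteq H_p$, the kernel of $\Psi_{\tilde p}:\pi_1(X,p)\to G(Y,X)$, which by covering space theory yields the factoring map $f$.

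For the forward direction, suppose $X$ admits a universal cover $\tilde X$. As recalled in the excerpt, the universal cover of a length space is realized as $\tilde{X}_{\mathcal U}$ for a suitable open cover $\mathcal U$ with the property that each $U_x\in\mathcal U$ lifts homeomorphically to \emph{every} covering space of $X$. By compactness, $\mathcal U$ has a positive Lebesgue number, so there is $\delta_0>0$ such that every ball of radius $\delta_0$ is contained in some $U_x\in\mathcal U$. The plan is to show that for all $\delta\le\delta_0$ the $\delta$-cover equals this universal cover. On one hand, since $\delta$-balls sit inside elements of $\mathcal U$, the covering group $\pi_1(X,\delta,p)$, generated by loops of the form $\alpha^{-1}\circ\beta\circ\alpha$ with $\beta$ lying in a $\delta$-ball, is contained in the covering group defining $\tilde X$; hence $\tilde{X}^{\delta}$ covers $\tilde X$. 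On the other hand, since $\tilde X$ is the universal cover and $\tilde{X}^{\delta}$ is a covering space, $\tilde X$ admits a map to $\tilde{X}^{\delta}$, forcing the two covers (and their covering groups) to coincide. Monotonicity of $\delta$-covers in $\delta$ then gives stabilization on $(0,\delta_0]$, and the identification of the stable cover with $\tilde X$ comes for free.

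I expect the main obstacle to be the careful bookkeeping of covering groups and the two-sided comparison that pins down the isomorphism rather than just a covering relation. Concretely, the delicate point is translating the geometric statement \textquotedblleft every small ball lifts homeomorphically to $Y$\textquotedblright\ into the algebraic statement \textquotedblleft the covering group of the $\eta$-cover is contained in the kernel $H_p$,\textquotedblright\ and conversely ensuring the reverse containment so that one genuinely obtains an isometry and not merely a covering map in one direction. This requires invoking the explicit description of $\pi_1(X,\mathcal U,p)$ as the normal subgroup generated by the $\alpha^{-1}\circ\beta\circ\alpha$ classes, together with the universal property characterizing $\tilde X_{\mathcal U}$, and then checking that both covering spaces correspond to the same normal subgroup of $\pi_1(X,p)$. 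The compactness of $X$ is used essentially throughout, via Lebesgue numbers, to pass from the pointwise even-covering condition to a uniform scale $\delta_0$; this is exactly the hypothesis that makes the equivalence clean.
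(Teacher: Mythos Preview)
The paper does not actually supply a proof of this theorem: it is quoted verbatim from \cite[Proposition 3.2, Theorem 3.7]{SW1} and used as a black box to establish Theorem~\ref{thm:UnCovRCD}. So there is no in-paper argument to compare against.

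That said, your argument is correct and is precisely the standard route (and the one taken in \cite{SW1}). Both directions hinge on the Lebesgue-number trick available because $X$ is compact, together with the Galois correspondence between connected covers and subgroups of $\pi_1(X,p)$. In the backward direction you correctly reduce to showing $\pi_1(X,\eta,p)\subset H_p$ for the characteristic subgroup $H_p$ of an arbitrary cover $Y$; your verification that each generator $\alpha^{-1}\circ\beta\circ\alpha$ lifts closed to $Y$ (because $\beta$ sits in an evenly covered $\eta$-ball) is exactly the right computation. In the forward direction your use of the description of $\tilde X$ as $\tilde X_{\mathcal U}$ for a cover $\mathcal U$ whose members lift to \emph{every} covering space---stated in the paper just above Definition~2.6---is the intended ingredient, and the two-sided subgroup comparison $\pi_1(X,\delta,p)\subset \pi_1(X,\mathcal U,p)$ (from $\mathcal U_\delta$ refining $\mathcal U$) together with $\pi_1(X,\mathcal U,p)\subset \pi_1(X,\delta,p)$ (from universality of $\tilde X$) cleanly forces equality of the covers. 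The only cosmetic point worth tightening is to state explicitly that $X$, being a length space, is locally path-connected, so that the lifting criterion applies to the map $\tilde X^{\delta_0}\to X$ when producing $f:\tilde X^{\delta_0}\to Y$.
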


For non-compact length spaces, the universal cover may not be a $\delta$-cover since loops could be homotopic to arbitrary small one at infinity. A simple  example is a cylinder with one side pinched to a cusp.  A natural way is to consider bigger and bigger balls. On the other hand, the universal cover of a ball may not exist even if the universal cover of the whole space exists. Also one would like to stay away from the boundaries of balls. For this purpose,  the second author  and C. Sormani
introduced relative $\delta$-covers \cite{SW2}.

We will use the following
convention. Open balls are denoted by $B_R(x)$ while closed balls are denoted by $B(x,R)$ all with intrinsic metric.

\begin{definition}[Relative $\delta$-cover]  Suppose $X$ is a length space, $x \in X$ and $0<r<R.$
	Let \[\pi^{\delta} : \tilde{B}_R(x)^{\delta} \to B_R(x)\] be the
	$\delta$-cover of the open ball $B_R(x).$  A connected component
	of $(\pi^{\delta})^{-1}(B(x,r)),$ where $B(x,r)$ is a closed ball,
	is called a relative $\delta$-cover of $B(x,r)$ and is denoted by
	$\tilde{B}(x,r,R)^{\delta}.$
\end{definition}

%\begin{remark}\label{DCCRDC} \emph{We will use the following
%		notation:}
%	
%	\begin{enumerate}
%		
%		\item \emph{The covering group $\pi_1(x,\delta)$ of
%			$\tilde{X}^{\delta}$ is the set of all equivalence classes of
%			loops $\gamma$ in $X$ based at $x$ that are homotopic in $X$ to
%			products of loops of the form $\alpha * \beta * \alpha^{-1}$ where
%			each $\beta$ lies in some open $\delta$-ball in $X.$}
%		
%		\item \emph{The covering group $\pi_1(x,R,\delta)$ of
%			$\tilde{B}_R(x)^{\delta}$ is the set of all equivalence classes of
%			loops $\gamma$ in $B_R(x)$ based at $x$ that are homotopic in
%			$B_R(x)$ to products of loops of the form $\alpha * \beta *
%			\alpha^{-1}$ where each $\beta$ lies in some open $\delta$-ball in
%			$B_R(x).$}
%		
%		\item \emph{The covering group $\pi_1(x,r,R,\delta)$ of
%			$\tilde{B}(x,r,R)^{\delta}$ is the set of all equivalence classes
%			of loops $\gamma$ in $B(x,r)$ based at $x$ that are homotopic in
%			$B_R(x)$ to products of loops of the form $\alpha * \beta *
%			\alpha^{-1}$ where each $\beta$ lies in some open $\delta$-ball in
%			$B_R(x).$}
%		
%	\end{enumerate}
% \end{remark}
\noindent
We will make use of the following simple lemmas from \cite{SW2}. 

\begin{lemma} \label{distance}
	The restricted
	metric on $B(p,R)$ from $X$ is the same as the intrinsic metric on
	$B(p,2R+\epsilon)$ restricted to $B(p,R)$ for any $\epsilon >0$.
	Namely,
	\be \sfd_X(q_1, q_2)=\sfd_{B(p,2R+\epsilon)}(q_1,q_2),
	\qquad \forall q_1, q_2 \in B(p,R).
	\ee
\end{lemma}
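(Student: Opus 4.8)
The plan is to prove the two inequalities separately; only the reverse one requires an argument. Throughout, write $\sfd_B$ for the intrinsic (length) metric on the closed ball $B:=B(p,2R+\epsilon)$, so that $\sfd_B(q_1,q_2)$ is the infimum of the lengths of curves joining $q_1$ to $q_2$ that remain inside $B$. The trivial direction is $\sfd_X(q_1,q_2)\le \sfd_B(q_1,q_2)$: any curve contained in $B$ is in particular a curve in $X$ with the same length, so the infimum defining $\sfd_X$ is taken over a larger family of competitors and can therefore only be smaller. Here I use that $(X,\sfd)$ is a length space, so that $\sfd_X$ itself is the infimum of lengths of curves in $X$.

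For the reverse inequality $\sfd_B(q_1,q_2)\le \sfd_X(q_1,q_2)$ the key point is to produce a distance-realizing curve in $X$ that does not leave $B$. Since $(X,\sfd)$ is geodesic, I would take a length-minimizing geodesic $\gamma:[0,L]\to X$, parametrized by arclength, joining $q_1$ to $q_2$, with $L=\sfd_X(q_1,q_2)$. From $q_1,q_2\in B(p,R)$ and the triangle inequality one has $L\le \sfd_X(q_1,p)+\sfd_X(p,q_2)\le 2R$. I would then bound the distance from $p$ to an arbitrary point $\gamma(t)$ of the geodesic by applying the triangle inequality from \emph{both} endpoints and using that, along a minimizing geodesic, $\sfd_X(q_1,\gamma(t))=t$ and $\sfd_X(\gamma(t),q_2)=L-t$:
\be
\sfd_X(p,\gamma(t))\le R+\min\{t,\,L-t\}\le R+\tfrac{L}{2}\le 2R .
\ee
Thus the whole geodesic stays in the closed ball $B(p,2R)\subset B$, so $\gamma$ is an admissible competitor for $\sfd_B$, whence $\sfd_B(q_1,q_2)\le \Length(\gamma)=\sfd_X(q_1,q_2)$. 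Combining the two inequalities gives the claimed equality.

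There is essentially no serious obstacle: the only point to get right is the elementary estimate $\min\{t,L-t\}\le L/2$, which is exactly what confines the geodesic to radius $2R$ (the one-sided triangle inequality would give only the cruder bound $3R$, which is why estimating from both endpoints is essential). Note that the slack $\epsilon>0$ is not actually needed to contain the geodesic, since it already lies in the closed ball $B(p,2R)$; it merely makes the statement robust and is harmless for the argument.
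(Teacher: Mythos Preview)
Your proof is correct; in fact the paper does not give its own argument for this lemma but simply cites it from \cite{SW2}, so there is nothing to compare against beyond noting that your two-inequality argument with the estimate $\sfd_X(p,\gamma(t))\le R+\min\{t,L-t\}\le 2R$ is the standard proof. Your observation that the $\epsilon$ is superfluous (the geodesic already lies in $B(p,2R)$) is also accurate.
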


\begin{lemma} \cite[Lemma 2.3]{SW2}  \label{localisom}
	The covering map
	\be
	\pi^\delta (r,R): (\tilde{B}(p,r,R)^\delta,
	d_{\tilde{B}(p,r,R)^\delta}) \rightarrow (B(p,r), d_{B(p,r)})
	\ee
	is an isometry on balls of radius $\delta/3$.
\end{lemma}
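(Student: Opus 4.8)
The plan is to show that the covering map $\pi^\delta(r,R)$ restricts to a metric isometry between any ball of radius $\delta/3$ in the relative $\delta$-cover and its image. Since $\pi^\delta(r,R)$ is by construction a covering map, it is automatically a local homeomorphism; the content of the lemma is the sharper \emph{metric} statement that distances are preserved exactly within balls of radius $\delta/3$. First I would recall that the covering $\tilde B_R(x)^\delta \to B_R(x)$ comes from the open cover $\mathcal U_\delta$ by balls of radius $\delta$, so that every ball of radius $\delta$ in the base lifts homeomorphically to each of its connected preimages. The relative cover $\tilde B(p,r,R)^\delta$ carries the length (intrinsic) metric induced by the covering, and the projection is a $1$-Lipschitz map because lifting can only increase (or preserve) lengths of curves: any curve downstairs lifts to a curve upstairs of the same length, so $\sfd_{\tilde B}(\tilde q_1,\tilde q_2)\geq \sfd_{B(p,r)}(\pi^\delta \tilde q_1,\pi^\delta \tilde q_2)$.

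The heart of the argument is the reverse inequality on small scales. Fix $\tilde q \in \tilde B(p,r,R)^\delta$ and let $q=\pi^\delta(\tilde q)$. I would take two points $\tilde q_1,\tilde q_2$ in the ball $B_{\delta/3}(\tilde q)$ and set $q_i=\pi^\delta(\tilde q_i)$. The key geometric observation is that any length-minimizing curve in the base joining $q_1$ to $q_2$ has length less than $2\delta/3 < \delta$ and hence is entirely contained in a single metric ball of radius $\delta$ (for instance the ball $B_\delta(q)$), which is an element of $\mathcal U_\delta$. Since every element of $\mathcal U_\delta$ lifts homeomorphically, this short curve admits a lift starting at $\tilde q_1$ and ending at a point projecting to $q_2$; one must check that this endpoint is exactly $\tilde q_2$ and not some other point of the fiber, which follows because the whole lift stays inside the single lifted sheet containing $\tilde q$, and $\tilde q_2$ lies in that same sheet (as it is $\delta/3$-close to $\tilde q$). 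Thus the lift realizes the same length upstairs, giving $\sfd_{\tilde B}(\tilde q_1,\tilde q_2)\leq \sfd_{B(p,r)}(q_1,q_2)$, which combined with the Lipschitz bound yields equality.

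The main obstacle I expect is the bookkeeping needed to guarantee that the lift of the short minimizing curve genuinely lands on $\tilde q_2$ and stays within one evenly-covered sheet: one has to use that $\tilde q_1$ and $\tilde q_2$ are both within $\delta/3$ of $\tilde q$ and that minimizing curves of length $<\delta$ are trapped in a single $\delta$-ball, so that the uniqueness of lifts for paths in a covering space (path-lifting property) pins down the endpoint. A subtle point is the interplay between the intrinsic metric on $B(p,r)$ and the ambient restricted metric, where Lemma \ref{distance} is invoked to ensure that distances between nearby points in $B(p,r)$ are computed consistently and that curves realizing them do not need to exit the relevant ball. Once these lifting and metric-consistency facts are in place, the equality of the two distances within balls of radius $\delta/3$ follows, establishing that $\pi^\delta(r,R)$ is an isometry on such balls.
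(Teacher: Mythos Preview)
Your argument is essentially correct and is precisely the standard proof of this fact; note that the paper itself does not give a proof but simply quotes the statement from \cite[Lemma~2.3]{SW2}. The mechanism you describe---use $1$-Lipschitzness of the projection for one inequality, and for the reverse lift a minimizing curve of length $<2\delta/3$ through the single evenly-covered sheet over $B_\delta(q)$ containing $\tilde q$---is exactly the argument in the cited reference, including the observation that $B_{\delta/3}(\tilde q)$ lies entirely in that sheet so the lift must terminate at $\tilde q_2$.
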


Instead of Theorem~\ref{thm:SWUniCov} which was the key to  prove the existence of the universal cover for a compact length space,  for non-compact spaces the  key role will be played by the following result   \cite[Lemma 2.4, Theorem 2.5]{SW2}.  See \cite{Ennis-Wei2010} for a description of the universal cover. 

\begin{theorem} \label{univstab}
	Let  $(X,\sfd)$ be a length space and assume that there is $x\in X$ with the following property: for
	all $r>0$, there exists $R \ge r$, such that $\tilde{B}(x,r,R)^\delta$
	stabilizes for all $\delta$ sufficiently small.  Then $(X,\sfd)$ admits a  universal cover $\tilde{X}$. 
	\end{theorem}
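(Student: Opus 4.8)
\emph{Proof plan.} The strategy is to mirror the compact case. There, the global $\delta$-covers $\tilde X^\delta$ form an inverse tower (smaller $\delta$ covers larger $\delta$), and once they stabilize Theorem~\ref{thm:SWUniCov} identifies the stable cover with the universal one. In the non-compact setting the global $\delta$-covers need not stabilize (loops may become homotopically trivial only ``at infinity'', as in a cylinder pinched to a cusp), so I would localize: use the hypothesis to obtain, on each ball of an exhaustion of $X$, a \emph{stabilized relative} $\delta$-cover, and then glue these local pieces into a single covering of $X$ which I will show is universal.

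\smallskip

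Concretely, fix the reference point $x$ and an exhaustion $r_1 < r_2 < \cdots \uparrow +\infty$. For each $i$ the hypothesis furnishes $R_i \ge r_i$ and a threshold $\delta_i>0$ such that $\tilde B(x,r_i,R_i)^\delta$ is independent of $\delta \in (0,\delta_i]$; denote this stabilized relative cover by $C_i$, with projection $q_i : C_i \to B(x,r_i)$ and a marked lift $\tilde x_i$ of $x$. By Lemma~\ref{localisom} the map $q_i$ is an isometry on balls of radius $\delta_i/3$, so each $C_i$ is a genuine covering of the ball that is locally as fine as possible below the scale $\delta_i$. The first task is to organize the $C_i$ into a directed system: for $i<j$ I would compare $C_i$ with the part of $C_j$ lying over $B(x,r_i)$ and produce compatible covering maps $\phi_{ij}$, so that $\tilde X := \varinjlim_i C_i$ is well defined and connected (the basepoints $\tilde x_i$ being compatible). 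Here Lemma~\ref{distance} is what lets me pass freely between the intrinsic metric of the balls and that of $X$, which is needed to make the relative covers at different radii talk to one another.

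\smallskip

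With the limit in hand I would verify that the $q_i$ glue to a map $q : \tilde X \to X$, that $q$ is a covering (it is a local homeomorphism by Lemma~\ref{localisom}, and the even–covering property over each small ball is inherited from the $C_i$), and that $\tilde X$ is connected. It remains to check the universal property: given any covering $\pi : Y \to X$, I must produce a map $\tilde X \to Y$ commuting with the projections. This I would do level by level, constructing compatible lifts $C_i \to Y|_{B(x,r_i)}$ and passing to the direct limit. The crux is that each stabilized relative cover $C_i$ \emph{dominates} the restriction of $Y$ to $B(x,r_i)$: over the ball, no covering of $X$ can unwrap anything finer than $C_i$ does.

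\smallskip

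This domination step is the main obstacle. The subtlety is precisely that $\delta$-stabilization only controls the specific relative covers $\tilde B(x,r_i,R_i)^\delta$, whereas the universal property quantifies over \emph{all} coverings $Y$. The content one must extract from stabilization is that there are no essential loops surviving below the scale $\delta_i$ within $B(x,r_i)$ — every loop contained in a ball of radius $<\delta_i$ is, inside $B_{R_i}(x)$, generated by loops lying in balls of radius $\delta_i$ — so that the restriction of $Y$ to $B(x,r_i)$ is covered by $C_i$. Establishing this, together with the compatibility of the maps across the exhaustion (so that the local lifts assemble into a single global map and the construction is independent of the choices of $r_i$, $R_i$, $\delta_i$), is the technical heart of the argument; the remaining verifications that $q$ is a covering and that $\tilde X$ is connected are then routine consequences of Lemma~\ref{localisom} and Lemma~\ref{distance}.
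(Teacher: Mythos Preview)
The paper does not prove this theorem; it is quoted from \cite[Lemma 2.4, Theorem 2.5]{SW2}. However, the paper does indicate (in the proof of Theorem~\ref{thm:UnCovRCD-noncompact}) the mechanism used there: one shows that the stabilization hypothesis produces an open cover $\mathcal U$ of $X$ with the property that every $U_x\in\mathcal U$ lifts homeomorphically to \emph{every} covering space of $X$; the covering space $\tilde X_{\mathcal U}$ associated to such a cover is then automatically universal by the standard Spanier construction recalled in Section~\ref{SS:CovSpaces}. No gluing or limiting procedure is needed --- once the pointwise statement ``each $y\in X$ has a neighborhood lifting to all covers'' is extracted from the relative stabilization, the universal cover comes for free.

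Your direct-limit plan is a genuinely different route, and it has a real gap at the step you flag as ``the first task'': producing the compatibility maps $\phi_{ij}$ between $C_i$ and $C_j$. The stabilized objects $C_i=\tilde B(x,r_i,R_i)^{\delta_i}$ and $C_j=\tilde B(x,r_j,R_j)^{\delta_j}$ live over \emph{different} ambient balls $B_{R_i}(x)$ and $B_{R_j}(x)$, with $R_i,R_j$ furnished independently by the hypothesis and bearing no a priori relation to one another. A loop in $B(x,r_i)$ may be trivial in $\pi_1(B_{R_j}(x))$ but not in $\pi_1(B_{R_i}(x))$ (or vice versa), so there is no canonical covering map from $C_j|_{B(x,r_i)}$ to $C_i$, nor from $C_i$ into $C_j$. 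Lemma~\ref{distance} only aligns \emph{metrics} on overlapping balls, not their fundamental groups. Without these transition maps the direct limit is not defined, and the rest of the argument (connectedness, the covering property, the domination step) has nothing to stand on. The $\mathcal U$-cover approach avoids this entirely: it converts the relative stabilization into a purely local statement about neighborhoods of points, and the global object $\tilde X_{\mathcal U}$ is then built in one stroke rather than assembled from incompatible pieces.
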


\subsection{Lift of metric measure spaces}\label{ss:LiftMMS}
In this short section we briefly recall how to lift the metric-measure structures of the space $(X,\sfd,\mm)$ to a covering space $\tilde{X}_{\mathcal U}$ associated to an open cover ${\mathcal U}$ of $X$ (so in particular to a $\delta$ cover $\tilde{X}^{\delta}$); let us mention that a similar construction for the universal cover was performed in \cite{BS2010}.  First of all if $(X,\sfd)$ is a   \emph{locally compact length}  space then the  covering space  $\tilde{X}_{\mathcal U}$  inherits the locally-compact length structure of the base $X$ in the following way. Denote with $\pi: \tilde{X}_{\mathcal U}  \to X$ the projection map and let us call  ``admissible''  a curve $\tilde{\gamma}$ in $\tilde{X}_{\mathcal U}$ if and  only if its composition with $\pi$ is a continuous curve in $(X,\sfd)$.   The length $L_{\tilde{X}_{\mathcal U}} (\tilde{\gamma})$ of an admissible curve in $\tilde{X}_{\mathcal U}$ is set to be the length of  $\pi \circ \tilde{\gamma}$ with respect to the
length structure in $(X,\sfd)$. For two points $\tilde{x},\tilde{y} \in \tilde{X}_{\mathcal U}$ we define the associated distance $\sfd_{\tilde{X}_{\mathcal U}}(\tilde{x},\tilde{y})$  to be the infimum of lengths of admissible curves in $\tilde{X}_{\mathcal U}$ connecting them:
\be\label{eq:deftildedX}
\sfd_{\tilde{X}_{\mathcal U}} (\tilde{x}, 	\tilde{y}):= \inf \left\{ L_{\tilde{X}_{\mathcal U}}(\tilde{\gamma}) \, | \, \tilde{\gamma}:[0,1]\to \tilde{X}_{\mathcal U} \text{ is admissible and } \tilde{\gamma}(0)=\tilde{x}, \tilde{\gamma}(1)=\tilde{y}  \right\}.
\ee
It is readily checked that $\tilde{\pi}: (\tilde{X}_{\mathcal U}, \sfd_{\tilde{X}_{\mathcal U}}) \to (X,\sfd)$ is a local isometry. 
In order to construct a lift of the measure $\mm$ to $\tilde{X}_{\mathcal U}$ let us consider the family
$$
\tilde{\Sigma}_{{\mathcal U}}:=\left\{ \tilde{E}\subset \tilde{X}_{\mathcal U} \,|\, {\pi}_{|\tilde{E}}: \tilde{E}\to E:=\pi(\tilde{E}) \text{ is an isometry} \right\}.
$$
The  family $\tilde{\Sigma}_{\mathcal U}$ is  clearly stable under intersections, therefore
the smallest $\sigma$-algebra $\sigma(\tilde{\Sigma}_{\mathcal U})$ containing  $\tilde{\Sigma}_{\mathcal U}$ coincides with the Borel $\sigma$-algebra ${\mathcal B}(\tilde{X}_{{\mathcal U}})$ according to the local compactness of $(\tilde{X}_{{\mathcal U}}, \sfd_{\tilde{X}_{\mathcal U}})$. 
We can then define a function $\mm_{\tilde{X}_{\mathcal U}}: \tilde{\Sigma}_{\mathcal U}\to [0, \infty)$ by setting $\mm_{\tilde{X}_{{\mathcal U}}}(\tilde{E}):=\mm\big(\pi(\tilde{E})\big)=\mm(E)$ and extend it in a unique way to a measure $\mm_{\tilde{X}_{{\mathcal U}}}$  on $\big(\tilde{X}_{{\mathcal U}},  {\mathcal B}(\tilde{X}_{{\mathcal U}})\big). $
\\The metric measure space $(\tilde{X}_{\mathcal U}, \sfd_{\tilde{X}_{\mathcal U}}, \mm_{\tilde{X}_{\mathcal U}})$ is called the ${\mathcal U}$-covering space or simply the  ${\mathcal U}$-lift  of the metric measure space $(X,\sfd,\mm)$.

\subsection{$\RCD^*(K,N)$-spaces}  \label{RCD_intro}

In this section  we quickly recall those basic definitions and properties of spaces with lower Ricci curvature bounds that we will need later on.

We denote by ${\mathcal  P}(X)$ the space of Borel probability measures on the complete and separable metric space $(X,\sfd)$ and by ${\mathcal  P}_{2}(X) \subset {\mathcal  P}(X)$ the subspace consisting of all the probability measures with finite second moment.

For $\mu_0,\mu_1 \in {\mathcal  P}_{2}(X)$ the quadratic transportation distance $W_2(\mu_0,\mu_1)$ is defined by
\begin{equation}\label{eq:Wdef}
  W_2^2(\mu_0,\mu_1) = \inf_\sggamma \int_X \sfd^2(x,y) \,\d\ggamma(x,y),
\end{equation}
where the infimum is taken over all $\ggamma \in {\mathcal  P}(X \times X)$ with $\mu_0$ and $\mu_1$ as the first and the second marginal.

Assuming the space $(X,\sfd)$ to be geodesic,  then the space $({\mathcal  P}_{2}(X), W_2)$ is also geodesic. We denote  by $\geo(X)$ the space of (constant speed minimizing) geodesics on $(X,\sfd)$ endowed with the $\sup$ distance, and by $\e_t:\geo(X)\to X$, $t\in[0,1]$, the evaluation maps defined by $\e_t(\gamma):=\gamma_t$. It turns out that any geodesic $(\mu_t) \in \geo({\mathcal  P}_{2}(X))$ can be lifted to a measure $\ppi \in {\mathcal  P}(\geo(X))$, so that $(\e_t)_\#\ppi = \mu_t$ for all $t \in [0,1]$. Given $\mu_0,\mu_1\in{\mathcal  P}_{2}(X)$, we denote by $\gopt(\mu_0,\mu_1)$ the space of all
$\ppi \in {\mathcal  P}({\geo(X)})$ for which $(\e_0,\e_1)_\#\ppi$ realizes the minimum in \eqref{eq:Wdef}. If $(X,\sfd)$ is geodesic, then the set $\gopt(\mu_0,\mu_1)$ is non-empty for any $\mu_0,\mu_1\in{\mathcal  P}_{2}(X)$.

We turn to the formulation of the $\CD^*(K,N)$ condition, coming from  \cite{BS2010}, to which we also refer for a detailed discussion of its relation with the $\CD(K,N)$ condition
 (see also \cite{Cavalletti12} and \cite{CavaSturm}). Here let just mention that the   $\CD^*(K,N)$ condition is  a priori weaker than the $\CD(K,N)$, and the two coincide  for $K=0$.  On the other hand  most of the comparison theorems hold already in sharp form  in (essentially non-branching) $\CD^{*}(K,N)$-spaces (Bishop-Gromov volume comparison \cite{CavaSturm}, L\'evy-Gromov  isoperimetric inequality \cite{CM1}, Brunn-Minkowski inequality \cite{CM2}), and moreover (non-branching) $\CD^{*}(K,N)$ satisfies the local-to-global property, a fact which is still not completely understood for (non-branching) $\CD(K,N)$ despite remarkable partial results   (see for instance \cite{Cavalletti12},\cite{CM2}, \cite{CavaSturm},  \cite{Rajala}).

Given $K \in \R$ and $N \in [1, \infty)$, we define the distortion coefficient $[0,1]\times\R^+\ni (t,\theta)\mapsto \sigma^{(t)}_{K,N}(\theta)$ as
\[
\sigma^{(t)}_{K,N}(\theta):=\left\{
\begin{array}{ll}
+\infty,&\qquad\textrm{ if }K\theta^2\geq N\pi^2,\\
\frac{\sin(t\theta\sqrt{K/N})}{\sin(\theta\sqrt{K/N})}&\qquad\textrm{ if }0<K\theta^2 <N\pi^2,\\
t&\qquad\textrm{ if }K\theta^2=0,\\
\frac{\sinh(t\theta\sqrt{K/N})}{\sinh(\theta\sqrt{K/N})}&\qquad\textrm{ if }K\theta^2 <0.
\end{array}
\right.
\]
\begin{definition}[Curvature dimension bounds]
Let $K \in \R$ and $ N\in[1,  \infty)$. We say that a m.m.s.  $(X,\sfd,\mm)$
 is a $\CD^*(K,N)$-space if for any two measures $\mu_0, \mu_1 \in {\mathcal  P}(X)$ with support  bounded and contained in $\supp(\mm)$ there
exists a measure $\ppi \in \gopt(\mu_0,\mu_1)$ such that for every $t \in [0,1]$
and $N' \geq  N$ we have
\begin{equation}\label{eq:CD-def}
-\int\rho_t^{1-\frac1{N'}}\,\d\mm\leq - \int \sigma^{(1-t)}_{K,N'}(\sfd(\gamma_0,\gamma_1))\rho_0^{-\frac1{N'}}+\sigma^{(t)}_{K,N'}(\sfd(\gamma_0,\gamma_1))\rho_1^{-\frac1{N'}}\,\d\ppi(\gamma), 
\end{equation}
where for any $t\in[0,1]$ we  have written $(\e_t)_\sharp\ppi=\rho_t\mm+\mu_t^s$  with $\mu_t^s \perp \mm$.
\end{definition}
Notice that if $(X,\sfd,\mm)$ is a $\CD^*(K,N)$-space, then so is $(\supp(\mm),\sfd,\mm)$, hence it is not restrictive to assume that $\supp(\mm)=X$, a hypothesis that we shall always implicitly do from now on. 

On $\CD^*(K,N)$-spaces a natural version of the Bishop-Gromov volume growth estimate holds (see \cite[Theorem 6.2]{BS2010} and \cite{CavaSturm}), however we will use just the following weaker statements:
\begin{theorem}
Let  $K\in \R$, $N\geq 1$ be fixed. Then there exists a function $\Lambda_{K,N}(\cdot, \cdot): \R_{>0} \times \R_{>0} \to \R_{>0}$ such that if $(X,\sfd,\mm)$ is a $\CD^*(K, N)$ space for some $K\in \R$, $N\geq 1$ the following holds:
\be\label{eq:BishGrom}
\frac{\mm(B_r(x))}{\mm(B_R(x))} \geq \Lambda_{K,N}(r,R), \quad \forall 0<r\leq R< 	\infty.
\ee
For $K=0$  the following more explicit bound holds
\be\label{eq:BGK0}
\frac{\mm(B_r(x))}{\mm(B_R(x))} \geq \left(\frac{r}{R}\right)^{N}, \quad \forall 0<r\leq R< 	\infty.
\ee

\end{theorem}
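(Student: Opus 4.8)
The plan is to read off both estimates from the $\CD^*(K,N)$ inequality \eqref{eq:CD-def} by transporting an (almost) Dirac mass at the center $x$ onto the normalized restriction of $\mm$ to the large ball — the ``cone over $x$'' computation underlying Bishop--Gromov. Alternatively, one may simply invoke the sharp volume comparison of \cite{BS2010, CavaSturm}, which asserts that $r\mapsto \mm(B_r(x))\big/\int_0^r \mathfrak{s}_{K,N}(t)^{N-1}\,\d t$ is non-increasing (with $\mathfrak{s}_{K,N}$ the one-dimensional model density), and then put $\Lambda_{K,N}(r,R):=\int_0^r \mathfrak{s}_{K,N}^{N-1}\big/\int_0^R \mathfrak{s}_{K,N}^{N-1}$; since only the definition of $\CD^*$ has been recalled so far, I sketch the self-contained transport argument.

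Fix $x$ and $0<r\le R<\infty$. For $\eps>0$ set $\mu_0:=\mm(B_\eps(x))^{-1}\,\mm\res B_\eps(x)$ and $\mu_1:=\mm(B_R(x))^{-1}\,\mm\res B_R(x)$, both probability measures with bounded support in $X=\supp\mm$, and pick $\ppi\in\gopt(\mu_0,\mu_1)$ as in the definition. Applying \eqref{eq:CD-def} with $N'=N$ and $t=r/R$, flipping signs, and discarding the (non-negative) term carrying $\rho_0^{-1/N}$, while using $\rho_1\equiv\mm(B_R(x))^{-1}$ on $B_R(x)$, yields
\[
\int \rho_t^{1-\frac1N}\,\d\mm \ \ge\ \mm(B_R(x))^{\frac1N}\int \sigma^{(t)}_{K,N}\big(\sfd(\gamma_0,\gamma_1)\big)\,\d\ppi(\gamma).
\]
For the left-hand side note that $\mu_t=(\e_t)_\#\ppi=\rho_t\mm+\mu_t^s$ is a probability concentrated in $B(x,tR+\eps)$, because $\sfd(x,\gamma_t)\le \eps + t\,\sfd(\gamma_0,\gamma_1)\le \eps + tR$; hence Hölder's inequality with exponents $\tfrac{N}{N-1}$ and $N$ (using $\int\rho_t\,\d\mm\le 1$) gives $\int \rho_t^{1-\frac1N}\,\d\mm\le \mm\big(B(x,tR+\eps)\big)^{1/N}$.

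It remains to bound the distortion integral below. Since $\sfd(\gamma_0,\gamma_1)\le R+\eps$ $\ppi$-a.e., one uses the monotonicity of $\theta\mapsto\sigma^{(t)}_{K,N}(\theta)$: for $K<0$ this map is non-increasing, so the integral is $\ge\sigma^{(t)}_{K,N}(R+\eps)$; for $K\ge 0$ it is non-decreasing with $\sigma^{(t)}_{K,N}(\theta)\downarrow t$ as $\theta\to 0^+$, so the integral is $\ge t$. Combining, letting $\eps\to0$, and recalling $tR=r$ (a routine limit handles the passage between open and closed balls), we obtain for $K<0$
\[
\frac{\mm(B_r(x))}{\mm(B_R(x))}\ \ge\ \Big(\sigma^{(r/R)}_{K,N}(R)\Big)^{N}=\left(\frac{\sinh\!\big(r\sqrt{-K/N}\big)}{\sinh\!\big(R\sqrt{-K/N}\big)}\right)^{N}=:\Lambda_{K,N}(r,R)>0,
\]
while for $K\ge0$ the same chain gives $\mm(B_r(x))/\mm(B_R(x))\ge t^N=(r/R)^N$, which is precisely \eqref{eq:BGK0} when $K=0$.

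The main obstacles I expect are two. First, the rigorous justification of the Dirac limit and of taking $N'=N$ in \eqref{eq:CD-def}: this is exactly why I regularize with $\mu_0$ supported on $B_\eps(x)$ rather than using $\delta_x$ directly, so that both marginals are absolutely continuous and the coefficient $\mm(B_\eps(x))^{1/N}$ of the discarded term vanishes in the limit. Second, the case $K>0$: here $\sigma^{(t)}_{K,N}(\theta)=+\infty$ once $K\theta^2\ge N\pi^2$, so \eqref{eq:CD-def} can hold only if $\sfd(\gamma_0,\gamma_1)$ never reaches that range; this is the generalized Bonnet--Myers bound $\diam(X)\le \pi\sqrt{N/K}$ forced by the condition, and for $R$ exceeding the diameter one has $B_R(x)=X$ and replaces $R$ by $\min(R,\diam(X))$. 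In this regime the coefficient is finite and $\ge t$, so the bound $(r/R)^N$ persists; if one wants the sharp value of $\Lambda_{K,N}$ for $K>0$ it is cleanest to defer to the cited sharp Bishop--Gromov theorem, which is designed to handle exactly this range.
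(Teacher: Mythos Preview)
The paper does not actually prove this statement; it is quoted without argument as a (weaker form of the) sharp Bishop--Gromov comparison for $\CD^*(K,N)$ spaces established in \cite[Theorem~6.2]{BS2010} and \cite{CavaSturm}. You anticipate this option yourself in your opening paragraph and then go further, sketching the standard ``transport an approximate Dirac at $x$ onto the uniform measure on $B_R(x)$'' proof.

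That argument is correct. Two small remarks. First, in the support estimate you write $\sfd(\gamma_0,\gamma_1)\le R$ but in fact only have $\le R+\eps$ (as you yourself use two lines later), so $\mu_t$ is supported in a ball of radius $tR+(1+t)\eps$ rather than $tR+\eps$; this is harmless in the limit $\eps\to 0$. Second, for $K>0$ the cleanest route to a positive $\Lambda_{K,N}$ is to note that $\CD^*(K,N)\Rightarrow\CD^*(0,N)$ (since $\sigma^{(t)}_{K,N}(\theta)\ge\sigma^{(t)}_{0,N}(\theta)=t$ for all $\theta$), and then invoke the $K=0$ bound $(r/R)^N$ directly; this bypasses any discussion of Bonnet--Myers or of the singular range of $\sigma^{(t)}_{K,N}$. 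Your self-contained treatment is more detailed than what the paper provides, which is content to cite the literature since the estimate is only used as a black-box tool.
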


One crucial property of the $\CD^*(K,N)$ condition is the stability under measured Gromov Hausdorff convergence of m.m.s., so that Ricci limit spaces are $\CD^{*}(K,N)$. Moreover, on the one hand it is possibile to see that Finsler manifolds are allowed as $\CD^{*}(K,N)$-space while on the other hand from the work of Cheeger-Colding \cite{Cheeger-Colding97I},\cite{Cheeger-Colding97II},\cite{Cheeger-Colding97III}  it was understood that purely Finsler structures never appear as Ricci limit spaces.   Inspired by this fact, in \cite{Ambrosio-Gigli-Savare11b}, Ambrosio-Gigli-Savar\'e proposed a  strengthening of the $\CD$ condition in order to enforce, in some weak sense, a Riemannian-like behavior of spaces with a curvature-dimension bound (to be precise in  \cite{Ambrosio-Gigli-Savare11b} it was analyzed the case of strong-$\CD(K,\infty)$ spaces endowed with a probability reference measure $\mm$; the axiomatization has been then simplified and generalized in  \cite{AmbrosioGigliMondinoRajala} to allow $\CD(K,\infty)$-spaces endowed with a $\sigma$-finite reference measure); the finite dimensional refinement led to the so called  $\RCD^{*}(K,N)$ condition, for $N\in (1,\infty)$.  
Such a strengthening consists in requiring that the space $(X,\sfd,\mm)$ is such that the Sobolev space $W^{1,2}(X,\sfd,\mm)$ is Hilbert, a condition we shall refer to as `infinitesimal Hilbertianity'. It is out of the scope of this note to provide full details about the definition of $W^{1,2}(X,\sfd,\mm)$ and its relevance in connection with Ricci curvature lower bounds. We will instead be satisfied in recalling the definition and few crucial properties which are relevant for our discussion: the stability under convergence of m.m.s. (see \cite{GigliMondinoSavare} and references therein), the Abresh-Gromoll Inequality (\cite{GigliMosconi} and \cite[Theorem 3.7, Corollary 3.8]{MNRect}),   the Splitting Theorem (see \cite{Gigli13}), and the a.e. infinitesimal regularity (see \cite{GigliMondinoRajala} and \cite{MNRect}). We also wish to mention that the $\RCD^{*}(K,N)$ condition is equivalent to the $N$-dimensional Bochner inequality, as proved independently in \cite{EKS} and  \cite{AMS}.

First of all recall that on a m.m.s. there is a canonical notion of `modulus of  the differential of a function' $f$, called weak upper differential and denoted with $|Df|_w$; with this object one defines the Cheeger energy
$$\Ch(f):=\frac 1 2 \int_X |Df|_w^2 \, \d \mm.$$
The  Sobolev space $W^{1,2}(X,\sfd,\mm)$ is by definition  the space of $L^2(X,\mm)$ functions having finite Cheeger energy, and it is endowed with the natural norm  $\|f\|^2_{W^{1,2}}:=\|f\|^2_{L^2}+2 \Ch(f)$ which makes it a Banach space. We remark that, in general, $W^{1,2}(X,\sfd,\mm)$ is not Hilbert (for instance, on a smooth Finsler manifold the space $W^{1,2}$ is Hilbert if and only if the manifold is actually Riemannian); in case  $W^{1,2}(X,\sfd,\mm)$ is  Hilbert then we say that $(X,\sfd,\mm)$ is \emph{infinitesimally Hilbertian}. 

\begin{definition}
An $\RCD^{*}(K,N)$-space $(X,\sfd,\mm)$ is an infinitesimally Hilbertian $\CD^*(K,N)$-space.
\end{definition}

Now we state a few fundamental properties of $\RCD^*(K,N)$ spaces.  First of all, on $\RCD^*(K,N)$-spaces a natural version of the Abresch-Gromoll inequality \cite{AG90} holds (see \cite{GigliMosconi} and \cite[Theorem 3.7, Corollary 3.8]{MNRect}). Here let us just recall  the following statement which is a particular case of \cite[Corollary 3.8]{MNRect} and will be enough for our purposes.
\begin{theorem}[Abresch-Gromoll Inequality]  \label{AG-ineq}
Given $K\in \R$ and $N\in(1,+\infty)$ there exist $\alpha(N)\in (0,1)$ and $C(K,N)>0$ with the following properties.
Given $(X,\sfd,\mm)$ an $\RCD^*(K,N)$-space, fix $p,q \in X$ with $\sfd_{p,q}:=\sfd(p,q)\leq 1$ and let $\gamma$ be a constant speed minimizing geodesic from $p$ to $q$. Then
\be\label{eq:AG}
e_{p,q}(x)\leq C(K,N) r^{1+\alpha(N)} \sfd_{p,q}, \quad \forall x \in B_{r \, \sfd_{p,q}}(\gamma(1/2)), \ \ r \in (0, \tfrac 14), 
\ee
where $e_{p,q}(x):=\sfd(p,x)+\sfd(x,q)-\sfd(p,q)$ is the so called excess function associated to $p,q$.
\end{theorem}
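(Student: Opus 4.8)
The plan is to reproduce the classical Abresch-Gromoll argument inside the $\RCD^*(K,N)$ calculus, the two non-smooth ingredients being the Laplacian comparison for distance functions and a maximum (comparison) principle against an explicit radial barrier. Throughout I set $L:=\sfd_{p,q}\le 1$ and $m:=\gamma(1/2)$, and write the excess as $e_{p,q}=b_p+b_q-L$ with $b_p:=\sfd(p,\cdot)$ and $b_q:=\sfd(q,\cdot)$. I will use three elementary facts: $e_{p,q}\ge 0$; $e_{p,q}$ is $2$-Lipschitz and vanishes identically on $\gamma$, so $e_{p,q}(m)=0$; and on $B_{rL}(m)$ with $r\in(0,1/4)$ one has $L/4\le b_p,b_q\le 3/4$, so both distance functions stay uniformly away from the singular points $p,q$.

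First I would convert the geometry into a differential inequality. By the Laplacian comparison theorem for $\RCD^*(K,N)$-spaces, on $X\setminus\{p\}$ the distance function satisfies, in the sense of measures, $\Delta b_p\le\frac{N-1}{b_p}\mm$ when $K=0$ (and an analogous bound whose prefactor stays bounded for general $K$, since $b_p$ ranges in the compact interval $[L/4,3/4]$ on $B_{rL}(m)$), the distributional Laplacian having a non-positive singular part. Summing the bounds for $b_p$ and $b_q$ gives $\Delta e_{p,q}\le\frac{\Lambda(K,N)}{L}\,\mm$ on $B_{rL}(m)$, the decisive factor $1/L$ coming from the $1/b_p$ behaviour of the comparison function. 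Thus $e_{p,q}$ is a subsolution with a source of size $\lambda:=\Lambda(K,N)/L$.

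Then comes the quantitative heart: trapping $e_{p,q}$ between its lower bound $0$ and an explicit radial supersolution. For $x\in B_{rL}(m)$ let $x_0\in\gamma$ realise $y:=\sfd(x,\gamma)\le\sfd(x,m)<rL$, so that $e_{p,q}(x_0)=0$. On a ball of radius comparable to $L$ that reaches $\gamma$, I would compare $e_{p,q}$ with $G=\phi\circ\sfd(x_0,\cdot)$, where the profile $\phi$ solves the model equation $\phi''+(N-1)\phi'/\rho=\lambda$ and hence has the form $\frac{\lambda\rho^2}{2N}+A+B\rho^{2-N}$. The homogeneous term $\rho^{2-N}$ is exactly what produces the improvement over the trivial Lipschitz estimate $e_{p,q}\le 2y$: choosing $A,B$ so that $G$ dominates $e_{p,q}$ on the relevant spheres and invoking the comparison principle gives, after optimising the radii against $y$, a bound of the form $e_{p,q}(x)\le C(K,N)\,y^{N/(N-1)}L^{-1/(N-1)}$. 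Since $y<rL$ and $\frac{N}{N-1}=1+\frac{1}{N-1}$, this reads $e_{p,q}(x)\le C(K,N)\,r^{1+\alpha(N)}L$ with $\alpha(N)\in(0,1)$ depending only on $N$ (one may take $\alpha(N)=\min\{1/(N-1),1/2\}$, adjusting the profile by a logarithm when $N\le 2$), which is the assertion.

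The hard part will be making the comparison principle rigorous in the non-smooth setting. The barrier $G=\phi\circ\sfd(x_0,\cdot)$ cannot be differentiated pointwise, so I would control its measure-valued Laplacian through the chain rule for the $\RCD$-Laplacian together with the comparison for $\sfd(x_0,\cdot)$, verify that the singular parts carry the favourable sign, and then apply a weak maximum principle compatible with measure-valued Laplacians and with the Sobolev-to-Lipschitz property. This is exactly the machinery developed in \cite{GigliMosconi} and \cite{MNRect}; once it is in place, the remaining content is the model-space ODE computation that fixes $\alpha(N)$, so the essential difficulty is the non-smooth comparison rather than the geometry, which is identical to the classical case.
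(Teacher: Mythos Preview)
The paper does not prove this theorem at all: it is stated in the preliminaries as a known result, quoted verbatim as ``a particular case of \cite[Corollary 3.8]{MNRect}'' (see also \cite{GigliMosconi}), and used later as a black box. So there is nothing in the paper to compare your argument against.

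That said, your outline is essentially the strategy carried out in those references: Laplacian comparison for the distance functions $b_p,b_q$ on the $\RCD^*(K,N)$-space gives a measure-valued upper bound $\Delta e_{p,q}\le \Lambda(K,N)/L\,\mm$ on $B_{rL}(m)$, and one then traps $e_{p,q}$ below an explicit radial supersolution built from the model profile $\phi$ solving $\phi''+(N-1)\phi'/\rho=\lambda$, the gain coming from the homogeneous term $\rho^{2-N}$. Two small remarks. First, on $B_{rL}(m)$ one has $b_p,b_q\in[L/4,3L/4]$ (not $[L/4,3/4]$); since $L\le 1$ this is harmless but worth writing correctly. Second, and more substantively, the ``hard part'' you flag---making the chain rule for the measure-valued Laplacian of $\phi\circ\sfd(x_0,\cdot)$ and the weak maximum principle rigorous---is genuinely the content of \cite{GigliMosconi}; your sketch correctly identifies this as the place where the non-smooth work lives, but if you were writing a self-contained proof you would need to spell out which version of the comparison/maximum principle you invoke and verify that the singular parts of the Laplacians carry the right sign. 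As a roadmap your proposal is sound; as a proof it is a pointer to the cited literature, which is exactly how the paper treats the statement.
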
 

Another fundamental property of $\RCD^{*}(0,N)$-spaces is the extension of the celebrated  Cheeger-Gromoll Splitting Theorem  \cite{ChGr} proved in \cite{Gigli13}   (let us also mention that for Ricci limit spaces the Splitting Theorem was established by Cheeger-Colding \cite{Cheeger-Colding96}).

\begin{theorem}[Splitting]\label{thm:Split} 
 Let $(X,\sfd,\mm)$ be an  $\RCD^*(0,N)$-space with $1 \le N < \infty$. Suppose that
 $X$ contains a line. Then $(X,\sfd,\mm)$ is isomorphic to $(X'\times \R, \sfd'\times \sfd_E,\mm'\times \LL_1)$,
where $\sfd_E$ is the Euclidean distance, $\LL_1$ the Lebesgue measure and  $(X',\sfd',\mm')$ is an $\RCD^*(0,N-1)$-space if $N \ge 2$ and a singleton if $N < 2$.
\end{theorem}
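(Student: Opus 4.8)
The plan is to follow the classical strategy of Cheeger-Gromoll, transposed to the synthetic setting via the second-order calculus available on $\RCD^{*}(0,N)$-spaces. Writing the given line as a pair of rays $\gamma^{\pm}:[0,+\infty)\to X$ emanating from $\gamma(0)$, I would introduce the associated Busemann functions
\[
b^{\pm}(x):=\lim_{t\to+\infty}\big(t-\sfd(x,\gamma^{\pm}(t))\big).
\]
Each $b^{\pm}$ is well defined, since the incremental quantities are monotone in $t$ by the triangle inequality, and each is $1$-Lipschitz. The first task is to prove that $b^{+}$ and $b^{-}$ are harmonic. I would deduce this from the Laplacian comparison for the distance function, which on an $\RCD^{*}(0,N)$-space holds in the measure-valued sense as $\Delta\,\sfd(\cdot,\gamma^{\pm}(t))\le (N-1)/\sfd(\cdot,\gamma^{\pm}(t))$; writing $b^{\pm}$ as the limit of $t-\sfd(\cdot,\gamma^{\pm}(t))$ and letting $t\to+\infty$, the comparison term vanishes and one obtains $\Delta b^{\pm}\ge 0$, i.e. each Busemann function is subharmonic.

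Next, since $\sfd(x,\gamma^{+}(t))+\sfd(x,\gamma^{-}(t))\ge \sfd(\gamma^{+}(t),\gamma^{-}(t))=2t$, one has $b^{+}+b^{-}\le 0$ on all of $X$, with equality along the line. Being a sum of subharmonic functions, $b^{+}+b^{-}$ is subharmonic and attains its maximal value $0$ at interior points, so the strong maximum principle forces $b^{+}+b^{-}\equiv 0$. Consequently $b:=b^{+}=-b^{-}$ is simultaneously sub- and superharmonic, hence harmonic, and lies in $W^{1,2}_{\loc}$ with vanishing measure Laplacian, $\Delta b=0$.

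The rigidity step is where the strengthened $\RCD$ structure, equivalent to the $N$-dimensional Bochner inequality, becomes essential. I would invoke the improved (dimensional) Bochner inequality in the schematic form
\[
\tfrac12\,\Delta|\nabla b|^{2}\ \ge\ |\mathrm{Hess}\,b|^{2}+\langle\nabla b,\nabla\Delta b\rangle+\tfrac1N(\Delta b)^{2}.
\]
Here one uses $\Delta b=0$, which kills the last two terms, together with $|\nabla b|\equiv 1$ $\mm$-a.e.; the latter identity I would obtain from the facts that $b$ is $1$-Lipschitz (so $|\nabla b|\le 1$) and that the approximating distance functions have unit gradient, the local convergence being good enough to saturate the bound. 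Then the left-hand side vanishes and one is forced to conclude $\mathrm{Hess}\,b\equiv 0$.

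The final block upgrades this infinitesimal rigidity to a genuine product of metric measure spaces: the vanishing Hessian makes the gradient flow $(\mathsf{F}_{s})_{s\in\R}$ of $\nabla b$ a one-parameter group of measure-preserving isometries, and setting $X':=b^{-1}(0)$ with the restricted distance and measure, the map $(y,s)\mapsto \mathsf{F}_{s}(y)$ yields an isomorphism $(X,\sfd,\mm)\simeq(X'\times\R,\sfd'\times\sfd_{E},\mm'\times\LL_{1})$; that $(X',\sfd',\mm')$ is again $\RCD^{*}(0,N-1)$ for $N\ge 2$ (and a singleton for $N<2$) then follows from the tensorization behaviour of the curvature-dimension condition under products. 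I expect the passage from $\mathrm{Hess}\,b=0$ to the actual splitting to be the main obstacle: in the smooth case one simply integrates a parallel unit gradient field, whereas here one must first make rigorous sense of the gradient flow of the merely Lipschitz Sobolev function $b$, show that $\mathrm{Hess}\,b=0$ makes this flow a flow of metric-measure isometries, and verify bijectivity and measure preservation of the resulting product map, all of which require the full first- and second-order differential calculus on $\RCD$-spaces rather than classical ODE theory.
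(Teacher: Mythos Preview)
The paper does not give its own proof of this statement: Theorem~\ref{thm:Split} is quoted from the literature as a known result due to Gigli \cite{Gigli13}, exactly as the Abresch--Gromoll inequality and the infinitesimal regularity are quoted from \cite{GigliMosconi,MNRect} and \cite{GigliMondinoRajala,MNRect}. There is therefore nothing in the paper to compare your proposal against.

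That said, your outline is broadly the correct strategy of \cite{Gigli13}: Busemann functions, Laplacian comparison to get subharmonicity, strong maximum principle to force $b^{+}+b^{-}\equiv 0$ and hence harmonicity of $b$, then a Bochner-type rigidity to pin down the gradient flow of $b$ as a flow by isometries, and finally the product decomposition. Two caveats are worth flagging. First, the way you justify $|\nabla b|\equiv 1$ is too casual: the distance functions $\sfd(\cdot,\gamma^{\pm}(t))$ converge only locally uniformly, and pointwise or $L^{2}_{\loc}$ convergence of functions does not by itself force convergence of minimal weak upper gradients; one has to argue more carefully, e.g.\ via lower semicontinuity of the Cheeger energy together with the upper bound $|\nabla b|\le 1$. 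Second, in Gigli's original argument the second-order calculus (in particular a pointwise Hessian) was not yet available in the form you invoke; the rigidity step is carried out with the tools of \cite{GigliMemo} and the Bakry--\'Emery formulation, so writing ``$\mathrm{Hess}\,b\equiv 0$'' as if it were a classical tensorial identity glosses over a substantial amount of work. You correctly identify the passage from infinitesimal rigidity to the actual product structure as the main obstacle; in \cite{Gigli13} this occupies the bulk of the paper and is far from a routine adaptation of the smooth argument.
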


%Notice that for the particular case $K=0$ the $\CD^*(0,N)$ condition is the same as the $\CD(0,N)$ one. Also, in the statement of the Splitting Theorem, 
Here, by `line',  we intend an isometric embedding of $\R$. 
In the paper we will need also the existence of an infinitesimally regular point, i.e. a point where the tangent cone is unique and isometric  to $\R^{n}, n\leq N$.   The a.e. infinitesimal regularity was settled for Ricci-limit spaces by Cheeger-Colding  \cite{Cheeger-Colding97I},\cite{Cheeger-Colding97II},\cite{Cheeger-Colding97III}; for an  $\RCD^{*}(K,N)$-space $(X,\sfd,\mm)$,  it was proved  in \cite{GigliMondinoRajala} that for $\mm$-a.e. $x\in X$ there exists a blow-up sequence converging to a Euclidean space. The $\mm$-a.e.  uniqueness of the blow-up limit, together with the rectifiability of an $\RCD^{*}(K,N)$-space, was then established in \cite{MNRect}.  More precisely the following  holds (actually the result in full strength proved in \cite{GigliMondinoRajala},\cite{MNRect} is more precise as it includes also the convergence of the rescaled measures, but  here let  state this shorter version which will suffice for the present work).

\begin{theorem}[Infinitesimal regularity of $\RCD^{*}(K,N)$-spaces]  \label{RCD-reg}
Let $(X,\sfd,\mm)$ be an $\RCD^{*}(K,N)$-space for some $K\in \R, N \in (1,\infty)$. Then $\mm$-a.e. $x\in X$ is a regular  point, i.e. for $\mm$-a.e.  $x \in X$ there exists $n=n(x)\in [1,N]\cap \N$ such that, for any sequence $r_{i}\downarrow 0$,  the rescaled  pointed metric spaces $(X, r_{i}^{-1} \sfd, x)$ converge in the pointed Gromov-Hausdorff sense to the pointed Euclidean space $(\R^{n}, \sfd_E, 0^{n})$. 
\end{theorem}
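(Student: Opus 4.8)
The plan is to establish the two distinct assertions hidden in the statement separately: first the \emph{existence} of a Euclidean blow-up at $\mm$-a.e.\ point, and then its \emph{uniqueness} (independence of the sequence $r_i$) together with the integrality and the bound $n\in[1,N]$, which I would extract as a by-product of a rectifiability statement. As a preliminary remark, note that rescaling the distance by $r_i^{-1}$ turns an $\RCD^*(K,N)$-space into an $\RCD^*(r_i^2K,N)$-space; since $r_i^2K\to 0$, every pmGH subsequential limit of $(X,r_i^{-1}\sfd,x)$ is an $\RCD^*(0,N)$-space, and the Bishop–Gromov inequality \eqref{eq:BGK0} provides the uniform local compactness needed to guarantee that $\Tan(X,x)$ is nonempty and, at $\mm$-a.e.\ $x$, non-degenerate.

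\textbf{Step 1: a Euclidean space is a tangent a.e.} I would first show that at $\mm$-a.e.\ $x$ some element of $\Tan(X,x)$ is a metric measure cone. This comes from a Lebesgue-differentiation argument applied to the monotone Bishop–Gromov density $r\mapsto\mm(B_r(x))/v_{K,N}(r)$: at a.e.\ $x$ the rescaled volume ratios become asymptotically radially constant, so the blow-up is a volume cone, and the volume-cone-implies-metric-cone principle upgrades this to a genuine metric-measure cone. Next I would exploit the cone structure: through any non-vertex point of a cone the radial geodesic extends to a line, so by the Splitting Theorem~\ref{thm:Split} the tangent of that cone at such a point splits as $\R\times W$ with $W$ an $\RCD^*(0,N-1)$-space, itself again a cone. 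Because a tangent of a tangent is again a tangent at the base point (a transfer that holds a.e.\ and requires a Vitali/density argument rather than a pointwise one), iterating this dimension drop—each splitting lowering the parameter by one, and the Splitting Theorem forcing the cross-section to be a singleton once the parameter falls below $2$—terminates after finitely many steps, producing some $\R^n\in\Tan(X,x)$.

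\textbf{Step 2: uniqueness and integrality via rectifiability.} Let $\mathcal{R}_n$ denote the set of $x$ admitting $\R^n$ as a tangent. I would stratify $\mathcal{R}_n$ into Borel pieces on which $B_r(x)$ is $\eps$-GH-close to a Euclidean ball at \emph{every} scale $r\le r_0$. On each piece I would build $\delta$-splitting maps $u=(u_1,\dots,u_n)\colon B_r(x)\to\R^n$ as harmonic replacements of the coordinate (distance) functions coming from the near-Euclidean structure; the Abresch–Gromoll estimate~\eqref{eq:AG}, combined with the Bochner/integral-Hessian bounds available on $\RCD^*$-spaces, controls the excess and forces the gradients $\nabla u_j$ to be almost orthonormal in an integral sense. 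A maximal-function argument then shows $u$ is bi-Lipschitz onto its image on a subset of large measure, whence $\mathcal{R}_n$ is $n$-rectifiable. Finally, rectifiability pins down the tangent: on a bi-Lipschitz chart the tangent is $\mm$-a.e.\ the \emph{unique} $\R^n$, which simultaneously forces $n\in[1,N]\cap\N$ (the chart dimension is a positive integer, bounded above by $N$ through \eqref{eq:BGK0}) and upgrades ``$\R^n$ is \emph{a} tangent'' to ``$\R^n$ is \emph{the} tangent for every $r_i\downarrow 0$.''

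\textbf{Main obstacle.} I expect the decisive difficulty to lie in Step 2, namely passing from qualitative $\eps$-closeness-at-all-scales to an honest bi-Lipschitz chart. This is exactly where the full quantitative machinery—sharp excess estimates, gradient and Hessian control for the splitting maps, and a Reifenberg-type argument adapted to the metric-measure setting—is indispensable, and it is the content for which \cite{MNRect} is invoked. A secondary delicate point is the self-improvement ``tangent of a tangent is a tangent at a.e.\ point'' underlying the iteration in Step 1, since it demands a careful measure-theoretic transfer back to $x$ (built on the convergence of the rescaled measures in \cite{GigliMondinoRajala}) rather than a naive pointwise composition of blow-ups.
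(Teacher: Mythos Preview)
The paper does not give its own proof of this statement: Theorem~\ref{RCD-reg} is stated without proof and attributed to \cite{GigliMondinoRajala} (for the existence of a Euclidean tangent at $\mm$-a.e.\ point) and \cite{MNRect} (for the uniqueness via rectifiability). So there is no in-paper argument to compare your proposal against.

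That said, your sketch is a faithful outline of the strategy actually carried out in those references: the iterated cone--splitting argument of Step~1 is precisely the mechanism in \cite{GigliMondinoRajala}, and the construction of $\delta$-splitting maps with gradient/Hessian control leading to bi-Lipschitz charts and $n$-rectifiability in Step~2 is the core of \cite{MNRect}. Your identification of the main difficulty---turning $\eps$-closeness at all scales into honest bi-Lipschitz charts via quantitative excess/Hessian estimates---is accurate, and your caveat about the ``tangent of a tangent is a tangent'' step requiring a measure-theoretic (density-point) transfer rather than a pointwise argument is also correct. As a high-level summary of the existing proofs, then, your proposal is on target; just be aware that within the present paper this theorem functions purely as a cited black box.
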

We end the section with the next lemma which will be useful in the sequel.

\begin{lemma}\label{lem:RCDcover}
Let $(X,\sfd,\mm)$ be an $\RCD^*(K,N)$-space for some $K\in \R, N\geq 1$ and let ${\mathcal U}$ be an open cover of $X$. Then the  ${\mathcal U}$-covering space $(\tilde{X}_{\mathcal U}, \sfd_{\tilde{X}_{\mathcal U}}, \mm_{\tilde{X}_{\mathcal U} })$ is an  $\RCD^*(K,N)$-space.
\end{lemma}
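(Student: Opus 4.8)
The plan is to establish the two defining properties of an $\RCD^{*}(K,N)$-space separately on the cover $(\tilde{X}_{\mathcal U}, \sfd_{\tilde{X}_{\mathcal U}}, \mm_{\tilde{X}_{\mathcal U}})$: first the curvature-dimension bound $\CD^{*}(K,N)$, and then infinitesimal Hilbertianity. The fundamental tool throughout is that, by the construction recalled in Section \ref{ss:LiftMMS}, the projection $\pi:\tilde{X}_{\mathcal U}\to X$ is a \emph{local isometry} which moreover is locally measure-preserving: every $\tilde{E}\in\tilde{\Sigma}_{\mathcal U}$ satisfies $\mm_{\tilde{X}_{\mathcal U}}(\tilde{E})=\mm(\pi(\tilde{E}))$. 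Consequently every point $\tilde{x}\in\tilde{X}_{\mathcal U}$ has a neighborhood on which $\pi$ is an isomorphism of metric measure spaces onto its image in $X$; so the cover is locally isomorphic to the base. Since the $\RCD^{*}(K,N)$ condition is a genuinely local-to-global property, the strategy is to leverage this local isomorphism.

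For the curvature bound I would invoke the local-to-global property of the $\CD^{*}(K,N)$ condition established by Bacher-Sturm \cite{BS2010}. Concretely: cover $\tilde{X}_{\mathcal U}$ by the open sets on which $\pi$ is an isomorphism onto a ball in $X$. Each such set, with the restricted metric-measure structure, satisfies $\CD^{*}(K,N)$ because the corresponding ball in the $\CD^{*}(K,N)$-space $X$ does (here one uses Lemma \ref{distance} to match intrinsic and restricted metrics on balls, so that the local $\CD^{*}$ condition transfers correctly). Having produced a local $\CD^{*}(K,N)$ structure, the local-to-global theorem upgrades it to the global $\CD^{*}(K,N)$ condition on the whole connected cover $\tilde{X}_{\mathcal U}$. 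I would be careful to verify the (essential) non-branching or other mild regularity hypotheses the local-to-global theorem requires; these are automatically inherited from the local isometry with $X$, which is itself $\RCD^{*}(K,N)$ and hence essentially non-branching.

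For infinitesimal Hilbertianity I would argue that the Cheeger energy on $\tilde{X}_{\mathcal U}$ is built from $|D\tilde{f}|_{w}$, and that the minimal weak upper gradient is a \emph{local} object: because $\pi$ is a local isometry preserving the measure, weak upper gradients on the cover are computed by lifting those on $X$, and the parallelogram identity defining infinitesimal Hilbertianity is a pointwise (hence local) condition on $\Ch$. Since $X$ is infinitesimally Hilbertian and the cover is locally isomorphic to $X$, the parallelogram law passes to the Cheeger energy of $\tilde{X}_{\mathcal U}$, making $W^{1,2}(\tilde{X}_{\mathcal U})$ Hilbert. Combining the two parts yields that $\tilde{X}_{\mathcal U}$ is infinitesimally Hilbertian and $\CD^{*}(K,N)$, i.e. $\RCD^{*}(K,N)$.

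The main obstacle I anticipate is the careful application of the local-to-global property: one must check that the open charts on which $\pi$ is an isomorphism form an admissible covering for the Bacher-Sturm theorem (overlapping suitably, with the restricted measure structures agreeing on overlaps), and that the required non-branching hypothesis is genuinely available. The infinitesimal Hilbertianity part is comparatively soft since it reduces to a local pointwise identity, but one should state precisely why localization of the Cheeger energy is legitimate on a locally compact length space — that is, why the parallelogram law can be tested locally. A similar lift for the universal cover was carried out in \cite{BS2010}, which I would cite as precedent for the measure-theoretic bookkeeping.
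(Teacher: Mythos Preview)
Your proposal is correct and follows essentially the same approach as the paper's proof: the local isomorphism via $\pi$ gives the local $\RCD^{*}(K,N)$ condition on the cover, the local-to-global property then yields the global condition, and infinitesimal Hilbertianity is handled by the locality of the Cheeger energy. The only minor difference is that the paper invokes \cite[Theorem 3.14]{EKS} and \cite[Theorem 7.8]{AMSLocToGlob} for the local-to-global step rather than Bacher--Sturm \cite{BS2010}, which sidesteps the (essential) non-branching verification you flag as an obstacle.
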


\begin{proof}
%Maybe add some details on the local to global??
From the construction of the metric-measure structure $(\tilde{X}_{\mathcal U}, \sfd_{\tilde{X}_{\mathcal U}}, \mm_{\tilde{X}_{\mathcal U}})$ performed in  Section  \ref{ss:LiftMMS}, it is clear that each point $\tilde{x}\in \tilde{X}_{\mathcal U}$ has a neighborhood $\tilde{U}$ such that $(\tilde{U}, {\sfd_{\tilde{X}_{\mathcal U}}}_{|\tilde{U}}, {\mm_{\sfd_{\tilde{X}_{\mathcal U}}}}\llcorner {\tilde{U}}) $ is isomorphic as metric measure space to $(U, \sfd_{|U}, \mm \llcorner U)$, where of course $U:=\pi(\tilde{U})$ is a neighborhood of $x:=\pi(\tilde{x}).$
Therefore $(\tilde{X}_{\mathcal U}, \sfd_{\tilde{X}_{\mathcal U}},  \mm_{\tilde{X}_{\mathcal U}}))$ satisfies the $\RCD^*(K,N)$ condition locally and the thesis follows by the local-to-global property of $\RCD^*(K,N)$ proved independently in \cite[Theorem 7.8]{AMSLocToGlob} \cite[Theorem 3.25] {EKS}. To be more precise one can first globalize the Curvature-Dimension condition using   \cite[Theorem 3.14]{EKS} and then globalize the infinitesimal Hilbertianity by recalling that the Cheeger energy  is a local object (for more details on the Cheeger energy see \cite[Section 4.3]{Ambrosio-Gigli-Savare11b}, \cite[Section 3]{AmbrosioGigliMondinoRajala} and \cite[Section 4.3]{GigliMemo}). 
%Indeed $f\mapsto \Ch_{\mm_{\tilde{X}^\delta}}(f):=\int_{\tilde{X}^\delta} |D f|_w^2 \, d \mm_{\tilde{X}^\delta}$ satisfies the parallelogram identity if and only if $f \mapsto |D f|^2_w(\tilde{x}) $ satisfies the parallelogram identity for $\mm_{\tilde{X}^\delta}$-a.e. $\tilde{x}\in \tilde{X}^\delta$, and this last property is clearly local, i.e. it is enough to check it on each  neighborhood $\tilde{U}$ where it is valid since $(X,\sfd,\mm)$ is infinitesimally Hilbertian by assumption.
\end{proof}

\section{Compact $\RCD^*(K,N)$-spaces admit a universal cover}
The goal of this section is to prove the following result.
\begin{theorem}\label{thm:UnCovRCD}
Let $(X,\sfd, \mm)$ be a   compact  $\RCD^*(K,N)$ space for some $K\in \R$, $N\in(1, +\infty)$.  Then there exists $\delta_0>0$ such that $\tilde{X}^{\delta}$ is isometric to $\tilde{X}^{\delta_0}$, for all $\delta\leq \delta_0$; in particular, in virtue of Theorem \ref{thm:SWUniCov}, $(X,\sfd)$ admits a universal cover. 
\end{theorem}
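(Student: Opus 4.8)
The plan is to verify the criterion of Theorem~\ref{thm:SWUniCov}, i.e. to show that the tower of $\delta$-covers is eventually constant. First I would record the monotonicity: for $\delta\le\delta'$ one has $\pi_1(X,\delta,p)\subseteq\pi_1(X,\delta',p)$, so $\tilde X^\delta$ covers $\tilde X^{\delta'}$ and the covers only refine as $\delta\downarrow0$. Hence stabilization is equivalent to the statement that the decreasing chain of normal subgroups $\pi_1(X,\delta,p)$ terminates. I would reformulate this group-theoretically: since both subgroups are normal, $\tilde X^\delta=\tilde X^{\delta_0}$ for all $\delta\le\delta_0$ precisely when every loop supported in a ball of radius $\delta_0$ already lies in $\pi_1(X,\delta,p)$ for all $\delta\le\delta_0$. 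In other words, the theorem reduces to an \emph{infinite divisibility} property: there is $\delta_0>0$ such that any loop contained in a ball of radius $\delta_0$ is homotopic rel basepoint to a product of conjugates of loops of arbitrarily small length.

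To prove divisibility I would argue by contradiction, using the compactness of $X$ to make all constants uniform. If it failed, there would be $\delta_0^{(i)}\downarrow0$ together with loops $\beta_i$ supported in balls of radius $\delta_0^{(i)}$ that are nontrivial in a finer cover $\tilde X^{\delta^{(i)}}$ (not generated by $\delta^{(i)}$-small loops). Coning $\beta_i$ to the centre of its ball reduces the problem to dividing geodesic loops whose length tends to $0$; taking shortest representatives, I obtain geodesic loops $\gamma_i$ based at $p_i$ with $L_i:=\mathrm{Length}(\gamma_i)\to0$, each nontrivial in its $\delta^{(i)}$-cover. A shortest representative is forced into a ``halfway'' configuration: the midpoint $m_i=\gamma_i(L_i/2)$ satisfies $\sfd(p_i,m_i)=L_i/2$ and both halves are minimizing geodesics from $p_i$ to $m_i$, since a shortcut would otherwise produce a strictly shorter loop in the same class. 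Thus each $\gamma_i$ is a geodesic bigon with tips $p_i,m_i$.

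The heart of the argument is then to split such a short geodesic bigon into strictly shorter loops by means of the Abresch-Gromoll estimate, Theorem~\ref{AG-ineq} — this is the $\RCD$ analogue of Sormani's Uniform Cut Lemma. Applying \eqref{eq:AG} to the minimizing halves at scale $\sfd_{p_i,m_i}=L_i/2$, the excess $e_{p_i,m_i}$ is of order $o(L_i)$ on a fixed proportion of the ball about the midpoint, which quantitatively prevents the two sides of the bigon from diverging and instead forces them to run within distance $\le(\tfrac12-c)L_i$ of one another, for a constant $c=c(K,N)>0$. Cutting across the bigon along the resulting short geodesic divides $\gamma_i$ into two loops of length at most $(1-c)L_i$, based at controlled points and joined to $p_i$ by short curves, so that the decomposition genuinely lies in the normal subgroup generated by shorter loops. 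Iterating the splitting drives all lengths below $2\delta^{(i)}$ in finitely many steps, exhibiting $\gamma_i$ as a product of conjugates of $\delta^{(i)}$-small loops and contradicting its nontriviality in $\tilde X^{\delta^{(i)}}$. This contradiction establishes divisibility, hence stabilization, and the universal cover then exists by Theorem~\ref{thm:SWUniCov}.

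The main obstacle is precisely the splitting step of the third paragraph, i.e. transferring Sormani's Uniform Cut Lemma to the non-smooth setting with the excess estimate \eqref{eq:AG} in place of smooth comparison geometry. Three technical points need care: (i) the halfway property — that a shortest representative is bisected by two minimizing halves — which must be argued using essential non-branching of $\RCD^*(K,N)$-spaces rather than smoothness; (ii) upgrading the excess bound into an honest short cross-cut and a genuine homotopy, while tracking basepoints and conjugating paths so the decomposition stays inside $\pi_1(X,\delta^{(i)},p)$; and (iii) extracting a \emph{definite} length gain $c=c(K,N)>0$ per cut, uniform over all small scales, so that the iteration terminates below any prescribed radius. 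The local Euclidean model at a regular point (Theorem~\ref{RCD-reg}) together with the Bishop-Gromov bound \eqref{eq:BishGrom} supply the uniform packing and compactness needed to keep these constants independent of $i$. Once (i)--(iii) are secured, the compactness wrap-up is routine.
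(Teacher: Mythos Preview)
The gap is the splitting step in your third paragraph. The Abresch--Gromoll estimate \eqref{eq:AG} bounds the excess $e_{p,q}(x)=\sfd(p,x)+\sfd(x,q)-\sfd(p,q)$ for $x$ near the midpoint of a minimizing geodesic. In your bigon, \emph{both} halves are minimizing from $p_i$ to $m_i$, so every point on either side already has excess zero; the estimate therefore carries no information whatsoever about the distance between the two sides, and there is no mechanism to produce a ``short cross-cut''. (More generally, $e_{p,q}$ depends only on $\sfd(\cdot,p)$ and $\sfd(\cdot,q)$ and cannot distinguish between two minimizing geodesics with the same endpoints.) The Uniform Cut Lemma you invoke is not a bigon-splitting result either: in Sormani's formulation it says that for a minimal halfway-minimizing loop of length $L$ at $p$, any point with $\sfd(p,x)\ge(\tfrac12+2S)L$ satisfies $\sfd(x,\gamma(L/2))\ge 3SL$; it is a lower bound on distance to the midpoint, obtained from excess comparison \emph{in the cover}, and it does not decompose the loop into shorter pieces. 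So the iteration never gets off the ground.

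The paper's proof is organized differently and uses two ingredients your sketch relegates to side remarks. First, one fixes a single \emph{regular} point $x$ (Theorem~\ref{RCD-reg}) and works in the $\delta$-cover $\tilde X^{\delta}$, where the minimal loop lifts to an \emph{open} geodesic from $\tilde x$ to $g\tilde x$. Because the tangent cone at $x$ is Euclidean, one can locate a point $\bar x\in X$ with $\sfd(x,\bar x)=(\tfrac12+2S)\sfd_i$ and $\sfd(\gamma(1/2),\bar x)\le 3S\sfd_i$; lifting $\bar x$ to the cover and applying \eqref{eq:AG} \emph{there} yields the contradiction $4S\sfd_i\le C(3S)^{1+\alpha}\sfd_i$ for $S$ small. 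This is Theorem~\ref{thm:LocStab}: it gives a radius $r_x>0$, independent of $\delta$, so that $B_{r_x}(x)$ lifts isometrically to every $\tilde X^\delta$. Second, globalization is not by iteration but by a Bishop--Gromov packing: if the $\delta$-covers failed to stabilize one would obtain, in $\tilde X^{\delta_j}$, arbitrarily many pairwise disjoint isometric balls $B_{r_0}(g_i\tilde x_j)$ inside a fixed ball of radius $4\,\diam(X)+r_0$, contradicting \eqref{eq:BishGrom}. The regularity of $x$ is thus not a mere source of uniform constants; together with the excess estimate in the cover it is the engine of the argument.
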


In order to prove Theorem \ref{thm:UnCovRCD},  we first establish the following result roughly saying that the topology of $X$ stabilizes in a small neighborhood of a regular point.

\begin{theorem}\label{thm:LocStab}
Let $(X,\sfd, \mm)$ be an $\RCD^*(K,N)$ space for some $K \in \R, N\in (1,\infty)$ and let $x \in X$ be a  regular point. Then there exists $r_x>0$  such that $B_{r_x}(x)$ lifts isometrically to $\tilde{X}^\delta$ for all $\delta>0$.
\end{theorem}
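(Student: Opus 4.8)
The plan is to translate the geometric statement into a statement about the fundamental group and then to run a quantitative contraction argument anchored at the regular point $x$.

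First, I would reduce to a $\pi_1$-statement. By the construction of the $\delta$-cover as the cover $\tilde{X}_{\mathcal U_\delta}$ associated to the open cover by $\delta$-balls, the ball $B_{r_x}(x)$ lifts isometrically to $\tilde{X}^{\delta}$ if and only if the image of $\pi_1(B_{r_x}(x),x)\to \pi_1(X,x)$ is contained in the covering group $\pi_1(X,\delta,x)$, i.e. every loop in $B_{r_x}(x)$ is, up to homotopy in $X$, a product of loops of the form $\alpha^{-1}\beta\alpha$ with $\beta$ supported in a ball of radius $\delta$. Since we must handle every $\delta>0$ simultaneously, the real target becomes: for a fixed small $r_x$ and for every $\delta>0$, each loop contained in $B_{r_x}(x)$ can be written as such a product of loops supported in balls of radius $\delta$. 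For $\delta\ge r_x$ this is automatic, since $B_{r_x}(x)$ itself sits in a single $\delta$-ball; the content is the limit $\delta\to 0$.

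Second, I would obtain this decomposition by an iterated halving scheme. The goal of one step is a \emph{contraction lemma}: there are $r_x>0$ and a fixed factor $\theta\in(0,1)$ (morally $\theta=\tfrac12$) so that, for every scale $0<s\le r_x$, any loop supported in a ball of radius $s$ near $x$ is homotopic in $X$ to a product of loops each supported in a ball of radius $\theta s$. Granting this, one iterates $k$ times, starting from the loop in $B_{r_x}(x)$, to express it as a product of loops supported in balls of radius $\theta^{k}r_x$; choosing $k$ with $\theta^{k}r_x<\delta$ yields the desired $\delta$-ball decomposition and hence the lift. The conjugating paths produced by the subdivision play the role of the curves $\alpha$ above. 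The two inputs from the hypotheses enter precisely in the contraction lemma: the infinitesimal regularity (Theorem~\ref{RCD-reg}) provides, for all sufficiently small $s$, a rescaled Gromov--Hausdorff approximation of $B_s(x)$ by a Euclidean ball in $\R^{n}$, the model in which a loop genuinely decomposes into loops on a finer scale; but since Gromov--Hausdorff proximity alone does \emph{not} transport topology, the Abresch--Gromoll excess estimate (Theorem~\ref{AG-ineq}) is the quantitative tool that forces the minimizing geodesics used to subdivide and to cone off the loop to be ``thin'' (small excess near their midpoints), so that the elementary loops built from short arcs of $\gamma$ and such geodesics actually lie in balls of radius $\theta s$ rather than of radius comparable to $s$.

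The step I expect to be the main obstacle is exactly this contraction lemma, for two reasons. We are not allowed to assume local contractibility, unique geodesics, or semi-local simple connectedness, so one cannot simply homotope $\gamma$ to a geodesic polygon through its subdivision points; the homotopies must be constructed honestly and their supports controlled. Moreover, only $x$ itself is known to be regular, while the coning and subdivision at smaller scales involve points near $x$ that need not be regular, so the scheme must stay anchored at $x$ and use the excess estimate uniformly to guarantee that the radii genuinely shrink by a fixed factor at each step, independently of which loop one starts with. Turning a purely metric input (Gromov--Hausdorff closeness together with the excess bound) into a homotopy-theoretic decomposition, uniformly in the loop, is the crux of the argument.
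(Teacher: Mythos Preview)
Your plan has a genuine gap at exactly the place you flag as ``the crux'': the contraction lemma. You want to show that every loop in $B_s(x)$ is \emph{homotopic in $X$} to a product of loops supported in balls of radius $\theta s$, and you hope to manufacture the homotopies from geodesic triangles whose thinness is controlled by the Abresch--Gromoll excess estimate. But the excess bound is a purely metric statement about distances; it does not by itself produce any continuous deformation. To pass from ``the triangle is thin'' to ``the loop bounding it is null-homotopic in a small ball'' you would need precisely the kind of local contractibility or cone-off structure that you correctly note is unavailable. Gromov--Hausdorff closeness to $\R^n$ likewise transports no homotopy information. So as written the scheme has no mechanism for building the homotopies, and the iteration never gets started.

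The paper avoids this difficulty entirely by changing the arena: it argues by contradiction and works \emph{on the cover} $\tilde{X}^{\delta}$, which by Lemma~\ref{lem:RCDcover} is itself $\RCD^*(K,N)$. If $B_r(x)$ fails to lift for arbitrarily small $r$, one finds a nontrivial deck transformation $g_i$ of minimal displacement $\sfd_i\to 0$ at a lift $\tilde{x}_i$, and a genuine minimizing geodesic $\tilde{\gamma}_i$ in $\tilde{X}^{\delta_i}$ from $\tilde{x}_i$ to $g_i\tilde{x}_i$. The regularity of $x$ is used only to locate, via the Euclidean tangent cone, a point $\bar{x}_i$ at distance $(\tfrac12+2S)\sfd_i$ from $x$ that is within $\approx 2S\sfd_i$ of the midpoint $\gamma_i(1/2)$; lifting $\bar{x}_i$ to $\tilde{\bar{x}}_i$ near $\tilde{\gamma}_i(1/2)$, the Abresch--Gromoll inequality (applied on the cover, to the honest geodesic segment $\tilde{\gamma}_i$) gives $e_{\tilde{x}_i,g_i\tilde{x}_i}(\tilde{\bar{x}}_i)\le C(3S)^{1+\alpha}\sfd_i$, while the distance lower bounds force $e_{\tilde{x}_i,g_i\tilde{x}_i}(\tilde{\bar{x}}_i)\ge 4S\sfd_i$. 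For $S$ small this is a contradiction. No homotopy is ever constructed; the argument is entirely metric on the $\RCD^*$ cover. This is the missing idea in your proposal.
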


\begin{remark}
It is always true that for every $x \in X$ we can find  $r_{x,\delta}>0$ such that  $B_{r_{x,\delta}}(x)$ lifts isometrically to $\tilde{X}^\delta$; the non-trivial content of Theorem \ref{thm:LocStab} is that if $x$ is a regular point, the radius $r_{x,\delta}>0$ does \emph{not} depend on $\delta$. 
\end{remark}
%n the proof of Theorem \ref{thm:LocStab} we will make use of the following lemma, established for smooth manifolds in \cite[Lemma 4.6]{SW1} (compare also with \cite{So00}).

%\begin{lemma}\label{lem:SKN}
%Let $(X,\sfd,\mm)$ be an $\RCD^*(K,N)$ space for some $K \in \R$ and $N\in (2,+\infty)$. Then there exists a constant $S=S(N,K)>0$ such that the following holds:  if $\gamma:[0,1]\to X$ is a minimal geodesic of length $D<1$ and $x \in X$ is a given  point satisfying 
%\be
%\sfd(x,\gamma(0)) \geq (S+1/2) D \quad  \text{and} \quad  \sfd(x,\gamma(1)) \geq (S+1/2) D
%\ee
%then 
%\be
%\sfd(x,\gamma(1/2)) \geq 3 S  D.
%\ee
%\end{lemma}

\begin{remark}\label{rem:simplificationsCompact}
Let us mention that the proof of Theorem  \ref{thm:LocStab} is simpler than the  proof  of the corresponding statement for limits of Riemannian manifolds (see \cite[Theorem 4.5]{SW1}): 
\begin{itemize}
\item first we argue directly on $X$ without the need to go back and forth to the smooth approximations, 
\item the second and more important simplification is that we can consider directly  a minimizing geodesic on the $\delta$-cover $\tilde{X}^\delta$ and argue via the Abresch-Gromoll excess estimate \eqref{eq:AG}, while in the Ricci-limit case  the approximation by geodesics in the smooth approximating manifolds prevented to use directly Abresch-Gromoll excess estimate and the authors had to go though the so called Uniform Cut Lemma, see \cite[Lemma 4.6]{SW1} and \cite[Lemma 7]{So00}.
\end{itemize}
\end{remark}
%The second simplication, i.e. not passing through the Uniform Cut Lemma, has also the advantage to allow us to consider all the dimensions $N>1$ while the results for Ricci limits in  \cite{SW1} were stated for $N\geq 3$ since the Uniform Cut Lemma is valid in this range (while the Abresch-Gromoll excess estimate is valid for all $N>1$).
%\\

\emph{Proof of Theorem \ref{thm:LocStab}}
The proof is by contradiction. Fix a regular point $x \in X$ and  assume that  for every $r>0$ there exists $\delta_r>0$ such that the metric ball $B_r(x)$ does not lift isometrically to $\tilde{X}^{\delta_r}$. Then  we can find  sequences $r_i\downarrow 0, \delta_i=\delta_{r_i}$ and $g_i\in G(X,\delta_i)$ such that, fixed a lifting $\tilde{x}_{i} \in \pi_{i}^{-1}(x) \in \tilde{X}^{\delta_i}$, it holds 
\be\label{eq:ri}
\sfd_i:=\sfd_{\tilde{X}^{\delta_i}}(\tilde{x}_i, g_i \tilde{x}_i)=\min \left\{\sfd_{\tilde{X}^{\delta_i}}(\tilde{x}_i, h \tilde{x}_i) \, : \,   1\neq h\in G(X,\delta_i)  \right\} \in (0,2r_i).
\ee 
Now let  $\tilde{\gamma}_i \subset \tilde{X}^{\delta_i}$ be a  minimizing geodesic from $\tilde{x}_i$ to $g_i \tilde{x}_i$. By using \eqref{eq:ri} and imitating the proof of the Halfway Lemma \cite[Lemma 5]{So00}, %\footnote{AM:maybe one can add a self contained proof here in a separate lemma for metric spaces but it is the same argument, maybe just leave as it is}, 
we infer that   $\gamma_i:=\pi_i(\tilde{\gamma}_i)\subset X$ are halfway minimizing, i.e. $\sfd_X(\gamma_i(0), \gamma_i(1/2))=\sfd_i/2$.
By Gromov compactness and by the fact that $x$ is a regular point,  we can choose a subsequence of these $i$  such that the rescaled pointed spaces $(X,\frac{1}{\sfd_i} \, \sfd ,x )$  converge to the tangent cone $(\R^k, \sfd_{\R^k}, 0^k)$ in the pointed Gromov-Hausdoff sense; in other words
\be\label{eq:epsiloni}
\sfd_{GH} (B_{10 \sfd_i}(x), B_{10 \sfd_i}(0^k)) \leq \varepsilon_i \, \sfd_i, \quad \text{with } \varepsilon_i \to 0,
\ee
where of course $B_{10 \sfd_i}(0^k)\subset \R^k$ is a Euclidean metric ball.
Now the intuitive idea is that the curves $\gamma_i$ are  closed based  geodesic loops in $X$ shrinking towards $x$, so we can find  corresponding closed curves in the tangent cone  $\R^k$ which are  ``almost  closed based  geodesic loops'' ; but since  $\R^k$ has no  closed based  geodesic loop,  we expect to get a contradiction. In order to make a rigorous proof we need to make quantitative statements, to this aim we are going to use the Abresch-Gromoll inequality  \eqref{eq:AG}.  

 %Let $S=S(N,K)>0$ be the constant given by Lemma \ref{lem:SKN}. 
 Let $S\in (0,1)$ be a  fixed positive number that in the end of the proof will be chosen small enough.
 Denote with $A_{r,R}(0^k):=B_R(0^k)\setminus B_r(0^k)$ the annulus in $\R^k$ centered at the origin with radii $r,R$. From \eqref{eq:epsiloni}, we get that there exist
\be\label{eq:barx}
y_i\in  A_{(1/2-\varepsilon_i) \sfd_i, (1/2+\varepsilon_i) \sfd_i}  (0^k)\subset \R^k,\,  z_i \in A_{(1/2+2S -\varepsilon_i) \sfd_i, (1/2+2S+\varepsilon_i) \sfd_i}(0^k) \subset \R^k, \,\bar{x}_i \in \partial B_{(1/2+ 2 S) \sfd_i}(x) \subset X
\ee
such that
\be\label{eq:Contr1}
\sfd (\gamma_i(1/2), \bar{x}_i) \leq \sfd_{\R^k} (y_i, z_i) + \varepsilon_i \sfd_i \leq 2\varepsilon_i \sfd_i + 2 S \sfd_i + \varepsilon_i \sfd_i.
\ee
%Now we are going to show that on the other hand, by using Lemma \ref{lem:SKN}, one must have $\sfd(\bar{x}_i, \gamma_i(1/2)) \geq 3S \sfd_i$. Clearly, for $\varepsilon_i$ small enough, this will yield the desired contradiction.  
We now lift the points $x$ and $\bar{x}_i$ to the $\delta_i$-cover $\tilde{X}^{\delta_i}$ as follows. Recall from the beginning of the proof  that the curve $\tilde{\gamma}_i\subset \tilde{X}^{\delta_i}$  goes from $\tilde{x}_i$ through $\tilde\gamma_i(1/2)$ to $g_i \tilde{x}_i$, and lifts the closed loop $\gamma_i$.
Let  $\sigma_i\subset X$ be a minimizing geodesic from $\gamma_i(1/2)$ to $\bar{x}_i$ and lift it to a minimizing geodesic $\tilde{\sigma}_i \subset \tilde{X}^{\delta_i}$ going from  $\tilde\gamma_i(1/2)$ to some point $\tilde{\bar{x}}_i \in \tilde{X}^{\delta_i}$ such that $\pi_i(\tilde{\bar{x}}_i )=\bar{x}_i$ and
\be\label{eq:distgt}
\sfd_{\tilde{X}^{\delta_i}}(\tilde{\gamma}(1/2) , \tilde{\bar{x}}_i)=\sfd(\gamma(1/2), \bar{x}_i).
\ee
 By our choice of  $\bar{x}_i$ in \eqref{eq:barx} we get
\be\label{eq:triangletilde}
\sfd_{\tilde{X}^{\delta_i}}(\tilde{x}_i, \tilde{\bar{x}}_i) \geq \sfd(x, \bar{x}_i)= \frac{\sfd_i}{2}+2S \sfd_i \quad \text{and}  \quad \sfd_{\tilde{X}^{\delta_i}}(g_i \tilde{x}_i, \tilde{\bar{x}}_i) \geq \frac{\sfd_i}{2}+2S \sfd_i.
\ee
Combining  \eqref{eq:distgt} and  \eqref{eq:Contr1} we infer that for large $i$ it holds $\tilde{\bar{x}}_i\in B_{3S\sfd_i}(\tilde{\gamma}(1/2))$.
Therefore by applying the Abresch-Gromoll excess estimate \eqref{eq:AG} we get
\be\label{eq:ExcessAG}
e_{\tilde{x}_i, g_i \tilde{x}_i}(\tilde{\bar{x}}_i) \leq C(K,N) (3S)^{1+\alpha(N)} \sfd_i.
\ee 
On the other hand the definition of excess together with \eqref{eq:ri} and  \eqref{eq:triangletilde} implies
\begin{eqnarray}
e_{\tilde{x}_i, g_i \tilde{x}_i}(\tilde{\bar{x}}_i)&:=&\sfd_{\tilde{X}^{\delta_i}}(\tilde{x}_i, \tilde{\bar{x}}_i)+ \sfd_{\tilde{X}^{\delta_i}}(\tilde{\bar{x}}_i, g_i \tilde{x}_i)-\sfd_{\tilde{X}^{\delta_i}} (\tilde{x}_i, g_i \tilde{x}_i) \nonumber \\
&\geq& 2 \left( \frac{1}{2} +2S \right) \sfd_i -\sfd_i = 4S \sfd_i. \label{eq: LBExc}
\end{eqnarray}
The combination of  \eqref{eq:ExcessAG}  and  \eqref{eq: LBExc} gives
$$4S \sfd_i \leq  C(K,N) (3S)^{1+\alpha(N)} \; \sfd_i \quad \text{for all } i\geq \bar{i}=\bar{i}(S)>>1, $$
which is clearly a contradiction for $S\in (0, \bar{S})$, where  $\bar{S}=\bar{S}(K,N)=\left( \frac{4}{C(K,N) \, 3^{1+\alpha(N)}} \right)^{1/\alpha(N)}$.

%Since by Lemma \ref{lem:RCDcover} we know that $(\tilde{X}^{\delta_i}, \sfd_{\tilde{X}^{\delta_i}}, \mm_{\tilde{X}^{\delta_i}})$ is an $\RCD^*(K,N)$-space, we can apply Lemma  \ref{lem:SKN} to the triangle formed by $\tilde{x}_i, \tilde{\bar{x}}_i$ and $g_i \tilde{x}_i$ and infer, using \eqref{eq:triangletilde}, that 
%\be\label{eq:triangleXt}
%\sfd_{\tilde{X}^{\delta_i}}( \tilde{\bar{x}}_i, \tilde{\gamma}(1/2)) \geq 3 S \sfd_i.
%\ee
%The combination of \eqref{eq:triangleXt} and \eqref{eq:distgt} is in contradiction with \eqref{eq:Contr1} for $\varepsilon_i$ sufficiently small, and the proof is complete.
\hfill$\Box$

In order to get Theorem \ref{thm:UnCovRCD},  inspired by the work of Sormani and the second author \cite{SW1},  the idea is to use Theorem \ref{thm:LocStab} combined with  \emph{Bishop-Gromov's
relative volume comparison theorem} on $\tilde{X}^{\delta}$ and a  \emph{packing
argument} to show that $\tilde{X}^\delta$ stabilize everywhere. 
\medskip 

\noindent
\emph{Proof of Theorem \ref{thm:UnCovRCD}}.

If the $\delta$-covers $\tilde{X}^\delta$ do not stabilize, then we can find a sequence $\{\delta_i\}_{i \in \N}$ such that 
\be\label{eq:Assdeltai}
\delta_i \downarrow 0, \;   0<\delta_i\leq \diam(X), \; \delta_i\leq 4\, \delta_{i-1}, \; \text{ all $\tilde{X}^{\delta_i}$ and $G(X,\delta_i)$ are distinct.}
\ee
Let us  fix a regular point $x \in X$ and a lifting $\tilde{x}_i\in \pi_i^{-1}(x)\subset \tilde{X}^{\delta_i}$.
The condition that  $G(X,\delta_i)$ are distinct  roughly means  that for every $i \in \N$ we can find $y_i\in X$ such that $B_{\delta_{i-1}}(y_i)$ contains a non-contractible loop based at $y_i$ which is unwrapped by the $\delta_i$-cover $\tilde{X}^{\delta_i}$. More precisely for every $i \in \N$ there exist $y_i \in X$ and $g_i\in G(X,\delta_i)$ such that the following holds:
\begin{enumerate}
\item for every $i\in \N$ fix a lifting $\tilde{y}_i \in \pi_{i}^{-1}(y_i)\subset \tilde{X}^{\delta_i}$ such that
$$
\sfd_{\tilde{X}^{\delta_i}}(\tilde{x}_i, \tilde{y}_i)= \min \left\{\sfd_{\tilde{X}^{\delta_i}}(\tilde{x}_i, \tilde{y}) \,:\,  \tilde{y} \in \pi_{i}^{-1}(y_i)\subset \tilde{X}^{\delta_i} \right\},
$$
in particular
\be\label{eq:txityi}
\sfd_{\tilde{X}^{\delta_i}}(\tilde{x}_i, \tilde{y}_i)=\sfd(x,y_i) \leq \diam(X),
\ee
\item  $ \sfd_{\tilde{X}^{\delta_i}}(\tilde{y}_i, g_i \, \tilde{y}_i)=\min \left\{\sfd_{\tilde{X}^{\delta_i}}(\tilde{y}_i, h \, \tilde{y}_i) \, : \, 1\neq h \in G(X,\delta_i) \right\} $,
\item  called $\tilde{\gamma}_i \subset \tilde{X}^{\delta_i}$ a minimizing geodesic from $\tilde{y}_i$ to $g_i \tilde{y}_i$, then $\gamma_i:=\pi_i(\tilde{\gamma}_i) \subset B_{\delta_{i-1}}(y_i)$ is a closed loop based at $y_i$ representing $g_i$,
\item $\gamma_i \subset B_{\delta_{i-1}}(y_i)$ is half-way minimizing,  in particular
\be\label{eq:gammaiHW}
\frac{1}{2} \sfd_{\tilde{X}^{\delta_i}} (\tilde{y}_i, g_i\, \tilde{y}_i)=  \sfd_{\tilde{X}^{\delta_i}} (\tilde{y}_i, \tilde{\gamma}_i (1/2)) = \sfd (y_i, \gamma_i(1/2)) \leq \delta_{i-1}.
\ee
\end{enumerate}
Since $\tilde{X}^{\delta_j}$ covers $\tilde{X}^{\delta_i}$ for $i=1, \ldots, j-1$, then  there exist  $g_1, \ldots, g_{j-1} \in G(X,\delta_j)$  distinct deck transformations of $\tilde{X}^{\delta_j}$ satisfying the above conditions. For every  $j\in \N$ and $i=1,\dots, j-1$, denote with $\pi^{i}_{j}: \tilde{X}^{\delta_j}\to \tilde{X}^{\delta_i}$ the covering map and let $\tilde{y}_j^i\in (\pi^i_j)^{-1}(\tilde{y}_i) \subset \tilde{X}^{\delta_j}$ be such that
\be\label{eq:distXiXj}
\sfd_{\tilde{X}^{\delta_j}}(\tilde{x}_{j}, \tilde{y}^i_j)= \min \left\{ \sfd_{\tilde{X}^{\delta_j}}(\tilde{x}_{j}, \tilde{y}) \, : \, \tilde{y}\in (\pi^i_j)^{-1}(\tilde{y}_i)\subset \tilde{X}^{\delta_j}\right\}=\sfd_{\tilde{X}^{\delta_i}}(\tilde{x}_i, \tilde{y}_i).
\ee
Notice also that, since $g_i$ is non trivial both in the deck transformations of $\tilde{X}^{\delta_i}$ and of $\tilde{X}^{\delta_j}$, we have %\footnote{AM: \eqref{eq:distXiXj1} seems correct to me but to be double checked and maybe motivated a bit more}
\be\label{eq:distXiXj1}
\sfd_{\tilde{X}^{\delta_j}}(\tilde{y}^i_{j}, g_i \, \tilde{y}^i_j)=\sfd_{\tilde{X}^{\delta_i}}(\tilde{y}_i, g_i \, \tilde{y}_i).
\ee
Combining then \eqref{eq:txityi},  \eqref{eq:gammaiHW},  \eqref{eq:distXiXj} and \eqref{eq:distXiXj1} we get
\begin{eqnarray}
\sfd_{\tilde{X}^{\delta_j}}(\tilde{x}_j, g_i \, \tilde{x}_j) &\leq& \sfd_{\tilde{X}^{\delta_j}} (\tilde{x}_j, \tilde{y}^{i}_{j}) +  \sfd_{\tilde{X}^{\delta_j}} (\tilde{y}^{i}_{j}, g_i \, \tilde{y}^{i}_{j}) +   \sfd_{\tilde{X}^{\delta_j}} (g_i  \, \tilde{y}^{i}_{j},  g_i \, \tilde{x}_i) \nonumber \\
&=&  \sfd_{\tilde{X}^{\delta_i}} (\tilde{x}_i, \tilde{y}_i) +  \sfd_{\tilde{X}^{\delta_i}} (\tilde{y}_i, g_i \, \tilde{y}_i) +   \sfd_{\tilde{X}^{\delta_j}} (g_i  \, \tilde{y}_i ,  g_i \, \tilde{x}_i) \nonumber \\
&\leq& \diam(X)+ 2 \delta_{i-1} + \diam(X) \leq 4 \diam(X),  \quad \forall i=1,\ldots, j-1, \label{eq:dtxigitxi}  
\end{eqnarray}
where we also used that  $\sfd_{\tilde{X}^{\delta_j}} (g_i  \, \tilde{y}_i ,  g_i \, \tilde{x}_i)= \sfd_{\tilde{X}^{\delta_j}} (\tilde{x}_i, \tilde{y}_i)$, as the deck transformations are isometries.
\\Since by construction $x\in X$ is a regular point, in virtue of Theorem \ref{thm:LocStab} we can find $r_0>0$ such that the ball $B_{r_0}(x)$ is lifted  to  a family of pairwise disjoint balls  $B_{r_0}(h\, \tilde{x}_j) \subset  \tilde{X}^{\delta_j}$, $h\in G(X,\delta_j)$,  each of which is isometric to $B_{r_0}(x)$; moreover \eqref{eq:dtxigitxi}  implies that
\be\label{eq:BroBD}
B_{r_0}(g_i\, \tilde{x}_j) \subset B_{4 \diam(X)+r_0} (\tilde{x}_j)\subset \tilde{X}^{\delta_j},  \quad \forall i=1,\ldots, j-1.
\ee
But now recall that $(\tilde{X}^{\delta_j}, \sfd_{\tilde{X}^{\delta_j}}, \mm_{\tilde{X}^{\delta_j}})$ is an $\RCD^*(K,N)$-space by Lemma \ref{lem:RCDcover} and therefore by Bishop-Gromov relative volume comparison \eqref{eq:BishGrom} we obtain
\be\label{eq:BGXt}
\frac{\mm_{\tilde{X}^{\delta_j}} (B_{r_0}(g_i \, \tilde{x}_j))}{\mm_{\tilde{X}^{\delta_j}} ( B_{4 \diam(X)+r_0} (\tilde{x}_j))} = \frac{\mm_{\tilde{X}^{\delta_j}} (B_{r_0}(\tilde{x}_j))}{\mm_{\tilde{X}^{\delta_j}} ( B_{4 \diam(X)+r_0} (\tilde{x}_j))}  \geq C_{X,r_{0}}>0,   \quad \forall i=1,\ldots, j-1, 
\ee
for some $C_{X,r_{0}}>0$. Since the balls $B_{r_0}(g_i\, \tilde{x}_j)$ are pairwise disjoint and contained in $B_{4 \diam(X)+r_0} (\tilde{x}_j)$, we get
\begin{eqnarray}
j-1 &\leq& \frac{1}{C_{X,r_{0}}}  \; \frac{(j-1)\;  \mm_{\tilde{X}^{\delta_j}} (B_{r_0}(\tilde{x}_j))}{\mm_{\tilde{X}^{\delta_j}} ( B_{4 \diam(X)+r_0} (\tilde{x}_j))} = \frac{1}{C_{X,r_0}}  \frac{\mm_{\tilde{X}^{\delta_j}} ( \bigcup_{i=1}^{j-1} B_{r_0}(g_i \,\tilde{x}_j))}{\mm_{\tilde{X}^{\delta_j}} ( B_{4 \diam(X)+r_0} (\tilde{x}_j))} \nonumber \\
&\leq&  \frac{1}{C_{X,r_0}}  \frac{\mm_{\tilde{X}^{\delta_j}} ( B_{4 \diam(X)+r_0} (\tilde{x}_j))}{\mm_{\tilde{X}^{\delta_j}} ( B_{4 \diam(X)+r_0} (\tilde{x}_j))} =   \frac{1}{C_{X,r_0}} , \nonumber 
\end{eqnarray} 
which gives a contradiction for $j$ large.
\hfill$\Box$

\subsection{Applications to the revised fundamental group of a compact $\RCD^*(K,N)$-space}

The following result was proved in \cite{BS2010} for non-branching $\CD^*(K,N)$-spaces  under the extra assumption of semi-local simply connectedness; recall that the extra assumption of semi-local simply connectedness ensures automatically the existence of a simply connected universal cover, so that the fundamental group $\pi_1(X)$ and the revised fundamental group $\bar{\pi}_1(X)$ coincide. On the other hand both the semi-local simply connectedness and the non-branching assumptions are not stable under $mGH$-convergence, while the $\RCD^{*}(K,N)$ condition is. 

\begin{theorem}\label{thm:finiteRev}
Let $(X,\sfd,\mm)$ be an $\RCD^*(K,N)$-space for some $K>0$ and $N\in (1,\infty)$. Then the revised fundamental group $\bar{\pi}_1(X)$ is finite.
\end{theorem}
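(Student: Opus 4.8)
The plan is to combine the existence of the universal cover (just established for the compact case) with the compactness forced by the positive lower curvature bound, and then run a short covering-space argument. Since $K>0$, the generalized Bonnet--Myers theorem for $\CD^*(K,N)$-spaces---a consequence of the Bishop--Gromov comparison \eqref{eq:BishGrom}, see \cite{BS2010}---guarantees that $(X,\sfd)$ has finite diameter and is therefore compact. Hence Theorem~\ref{thm:UnCovRCD} applies and $X$ admits a universal cover $\pi:(\tilde{X},\sfd_{\tilde{X}})\to (X,\sfd)$, realized as the $\delta_0$-cover $\tilde{X}^{\delta_0}$ for some $\delta_0>0$.

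First I would upgrade the metric-measure structure on the cover: by Lemma~\ref{lem:RCDcover} the lift $(\tilde{X},\sfd_{\tilde{X}},\mm_{\tilde{X}})$ is itself an $\RCD^*(K,N)$-space. Crucially the curvature parameter $K>0$ is \emph{inherited}, so a second application of the Bonnet--Myers diameter bound shows that $\tilde{X}$ too has finite diameter; being complete, proper and bounded, $(\tilde{X},\sfd_{\tilde{X}})$ is thus compact.

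Next I would invoke $\bar{\pi}_1(X)=G(\tilde{X},X)$ and argue that a group of deck transformations of a compact connected cover is finite. Fix $p\in X$ and a lift $\tilde{p}\in \pi^{-1}(p)$. By the covering property the fiber $\pi^{-1}(p)$ is a discrete subset of $\tilde{X}$; being also closed in the compact space $\tilde{X}$, it is a compact discrete set, hence finite. On the other hand every nontrivial deck transformation is fixed-point free, since its fixed-point set is open and closed in the connected space $\tilde{X}$ and so is either empty or all of $\tilde{X}$. Consequently the orbit map $h\mapsto h\,\tilde{p}$ from $G(\tilde{X},X)$ into the finite set $\pi^{-1}(p)$ is injective, and therefore $|\bar{\pi}_1(X)|=|G(\tilde{X},X)|\le |\pi^{-1}(p)|<\infty$.

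The real work is concentrated in the inputs already available---the existence of the universal cover in the compact case and the stability of the $\RCD^*(K,N)$ condition under the covering construction---so the step most worth double-checking is that the \emph{same} $K>0$ is inherited by the cover (which is exactly what Lemma~\ref{lem:RCDcover} provides), ensuring that the diameter bound applies on $\tilde{X}$ and not merely on $X$. Once the compactness of $\tilde X$ is in hand the conclusion is purely topological; alternatively, one could replace the fiber-counting by a packing argument, using the local isometry $\pi$ to produce a uniform radius $r_0>0$ with $\{B_{r_0}(h\,\tilde{p})\}_{h\in G(\tilde{X},X)}$ pairwise disjoint inside the compact space $\tilde{X}$ and then bounding their number by Bishop--Gromov \eqref{eq:BishGrom}. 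I regard the fiber-finiteness argument above as the cleaner of the two.
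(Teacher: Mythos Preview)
Your proof is correct and follows essentially the same route as the paper: use the Bonnet--Myers bound (from $K>0$) to get compactness, invoke the existence of the universal cover and the fact that it is again $\RCD^*(K,N)$ with the same $K>0$, hence compact, and conclude finiteness of the deck transformation group. You are in fact a bit more careful than the paper in two places: you explicitly verify compactness of $X$ before invoking Theorem~\ref{thm:UnCovRCD}, and you spell out the final topological step (discreteness of the fiber plus fixed-point-freeness of deck transformations) where the paper simply says that infinitely many $g_i\tilde{x}$ would yield infinitely many isolated points in the compact $\tilde{X}$.
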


\begin{proof}
From Theorem  \ref{thm:UnCovRCD} we know that the universal cover $(\tilde{X},\tilde{\sfd},\tilde{\mm})$ of an $\RCD^*(K,N)$ space $(X,\sfd,\mm)$ exists and is itself $\RCD^*(K,N)$. Recall that the  revised fundamental group $\bar{\pi}_1(X)$ is by definition made by the deck transformations. 
Since the universal cover $\tilde{X}$ is an $\RCD^*(K,N)$-space for $K>0$ then it is has diameter at most $\pi \sqrt{\frac{N}{K}}$, in particular $\tilde{X}$ is compact. If by contradiction we could find infinitely many distinct $g_i \in \bar{\pi}_1(X)$ then, fixed $\tilde{x} \in \tilde{X}$, we would obtain infinitely many isolated points $g_i \tilde{x}\subset \tilde{X}$, contradicting the compactness of $\tilde{X}$. 
\end{proof}

\subsubsection{Extension of Cheeger-Gromoll Theorem to $\RCD^{*}(0,N)$-spaces}
The next result extends to $\RCD^*(0,N)$ spaces a celebrated result by Cheeger-Gromoll  \cite{ChGr} for compact manifolds with non-negative Ricci curvature. The extension seems new even for Ricci-limit spaces. 

\begin{theorem}\label{thm:ChGrpi1}
Let $(X,\sfd,\mm)$ be a compact  $\RCD^{*}(0,N)$-space for some $N \in (1,\infty)$. Then  the revised fundamental group $\bar{\pi}_1(X)$ contains a finite normal subgroup $\psi \lhd \bar{\pi}_1(X)$ such that $ \bar{\pi}_1(X)/\psi$ is a finite group extended by $\underbrace{\Z\oplus \ldots \oplus \Z}_{k \text{ times}}$ and the universal cover $(\tilde{X},\tilde{\sfd}, \tilde{\mm})$ splits isomorphically as m.m.s. as  
$$(\tilde{X}, \tilde{\sfd}, \tilde{\mm}) \simeq (\bar{X}\times \R^{k}, \sfd_{\bar{X}\times \R^{k}}, \mm_{\bar{X}\times \R^{k}}),$$ 
where $(\bar{X}, \sfd_{\bar{X}}, \mm_{\bar{X}})$ is a \emph{compact}   $\RCD^{*}(0,N-k)$-space.
\end{theorem}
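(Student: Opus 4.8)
The plan is to follow the classical Cheeger--Gromoll argument, adapted to the $\RCD^*(0,N)$ setting, using that the universal cover $\tilde X$ exists and is itself $\RCD^*(0,N)$ (Theorem~\ref{thm:UnCovRCD-noncompactIntro}) together with the Splitting Theorem~\ref{thm:Split}. First I would let $\Gamma:=\bar\pi_1(X)$ act on $\tilde X$ by deck transformations, which are isometries of the m.m.s.\ structure; since $X$ is compact, this action is cocompact, properly discontinuous, and free. The first structural step is to split off all the lines that arise from the group action: if $\tilde X$ contains a line then by Theorem~\ref{thm:Split} it factors isometrically as $X'\times\R$, and one iterates. Because $X$ is compact and $\tilde X$ is $\RCD^*(0,N)$ with $N<\infty$, the maximal number of Euclidean factors one can extract is bounded by $\lfloor N\rfloor$, so after finitely many steps one obtains a splitting
\[
(\tilde X,\tilde\sfd,\tilde\mm)\simeq(\bar X\times\R^k,\sfd_{\bar X\times\R^k},\mm_{\bar X\times\R^k}),
\]
where $(\bar X,\sfd_{\bar X},\mm_{\bar X})$ is an $\RCD^*(0,N-k)$-space \emph{containing no line}. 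The fact that $\bar X$ is compact is the crucial geometric output: if $\bar X$ were noncompact, it would contain a ray, and a standard argument (using cocompactness of $\Gamma$ and the absence of lines in $\bar X$) forces either a contradiction or the existence of an additional line, which has already been split off.

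The next step is to analyze how $\Gamma$ interacts with this splitting. I would show that each deck transformation respects the product structure up to the factors, i.e.\ that there is a homomorphism $\Gamma\to\mathrm{Isom}(\bar X)\times\mathrm{Isom}(\R^k)$ whose image in the $\R^k$-factor lands in the group of \emph{translations} (rigid motions with no rotational part that commute with the cocompact action), exactly as in the smooth Cheeger--Gromoll proof. The key point here is that isometries of $\bar X\times\R^k$ preserve the de Rham--type decomposition because $\bar X$ has no Euclidean line factor, so no isometry can mix the compact factor with the Euclidean one; this uses uniqueness of the maximal Euclidean factor in the splitting. Projecting $\Gamma$ to the translational part of $\mathrm{Isom}(\R^k)$ gives a homomorphism $\rho\colon\Gamma\to\R^k$ whose image $\rho(\Gamma)$ is a finitely generated abelian subgroup of $\R^k$; cocompactness forces $\rho(\Gamma)$ to be a lattice, hence $\rho(\Gamma)\cong\Z^k$. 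I would set $\psi:=\ker\rho$, the subgroup of deck transformations acting trivially on the $\R^k$-factor.

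It remains to identify $\psi$ and describe the quotient. Since $\psi$ acts on $\bar X\times\R^k$ trivially on $\R^k$, it acts by isometries on the \emph{compact} space $\bar X$; because the total action of $\Gamma$ is properly discontinuous and $\bar X$ is compact, $\psi$ must be finite. Normality of $\psi$ in $\Gamma$ is immediate since it is the kernel of the homomorphism $\rho$, and $\bar\pi_1(X)/\psi\cong\rho(\Gamma)\cong\Z^k$, so $\bar\pi_1(X)$ is a finite group $\psi$ extended by $\Z^k$ (equivalently, the extension $1\to\psi\to\bar\pi_1(X)\to\Z^k\to 1$), which is exactly the asserted structure of $\bar\pi_1(X)$.

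The main obstacle I anticipate is the second step: establishing rigorously, in the non-smooth setting, that every deck transformation splits as a product isometry with the Euclidean part acting as a pure translation. In the smooth case this rests on the de Rham decomposition and on the fact that the group of isometries of a compact manifold has no nontrivial one-parameter subgroup of the right type; here I would instead rely on a \emph{uniqueness of the maximal Euclidean factor} statement for $\RCD^*(0,N)$-spaces (the number $k$ and the factor $\bar X$ are intrinsic, so any isometry must preserve the splitting), together with the absence of lines in $\bar X$ to rule out rotational parts. Verifying that $\rho(\Gamma)$ is genuinely a lattice (discrete and cocompact in $\R^k$) rather than merely a finitely generated dense subgroup is the other delicate point, and it is exactly here that the freeness and proper discontinuity of the deck action, combined with compactness of $\bar X$ and of $X=\tilde X/\Gamma$, must be invoked carefully.
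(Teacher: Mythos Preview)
Your overall architecture---split $\tilde X$ as $\bar X\times\R^k$ with $\bar X$ compact, show isometries respect the splitting, project $\Gamma$ to the $\R^k$-factor, and identify $\psi$ as the kernel---matches the paper's Steps 1--4. The splitting step and the argument that $\psi$ is finite are essentially correct and close to what the paper does (the paper uses a measure argument on $\bar X$ rather than proper discontinuity, but either works).

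However, there is a genuine error in your second step. You claim that the image of $\Gamma$ in $\mathrm{Isom}(\R^k)$ lands in the \emph{translations}, so that you get a homomorphism $\rho\colon\Gamma\to\R^k$ with $\rho(\Gamma)\cong\Z^k$, and hence $\bar\pi_1(X)/\psi\cong\Z^k$. This is false already in the smooth setting: for the flat Klein bottle, $\tilde X=\R^2$, $\bar X$ is a point, $\psi$ is trivial, and $\bar\pi_1(X)$ is the Klein bottle group, which is not abelian. Deck transformations act on $\R^2$ with a glide reflection, so the rotational part is nontrivial. More basically, ``projecting to the translational part'' of $\mathrm{Isom}(\R^k)=O(k)\ltimes\R^k$ is not a group homomorphism: if $\phi_i=(A_i,v_i)$ then $\phi_1\phi_2=(A_1A_2,\,v_1+A_1v_2)$, so the translational parts do not add. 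This is also why the theorem only asserts that $\bar\pi_1(X)/\psi$ is ``a finite group extended by $\Z^k$'' rather than $\Z^k$ itself.

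The paper's fix is to project $\Gamma$ to all of $\mathrm{Isom}(\R^k)$ and invoke Bieberbach's theorem: since the action of $G_{\R^k}:=(\pi_{\R^k})_*\Gamma$ on $\R^k$ is discrete and cocompact (the latter because $\pi_{\R^k}(\tilde X_0)$ is compact), $G_{\R^k}$ is a crystallographic group, i.e.\ it contains a normal rank-$k$ free abelian subgroup of finite index. Then $\psi:=\ker\bigl((\pi_{\R^k})_*\bigr)$ is finite (your Step 3 argument, or the paper's measure argument on $\bar X$), and $\bar\pi_1(X)/\psi\cong G_{\R^k}$ sits in a short exact sequence $1\to\Z^k\to G_{\R^k}\to F\to 1$ with $F$ finite. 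Your proof becomes correct once you replace the translation claim by this Bieberbach step.
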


\begin{proof}
First of all, by Theorem   \ref{thm:UnCovRCD} we know that if   $(X,\sfd,\mm)$ is a compact  $\RCD^{*}(0,N)$-space for some $N \in (1,\infty)$ then it admits a universal cover   $(\tilde{X}, \sfd_{\tilde{X}}, \mm_{\tilde{X}})$  which is also  an  $\RCD^{*}(0,N)$-space. Moreover, by definition, the group of deck transformations  $G(\tilde{X},X)$ is isomorphic as group to the revised fundamental group  $\bar{\pi}_1(X)$. We prove the result by subsequent steps.
\smallskip

\textbf{Step 1}:    $(\tilde{X},\tilde{\sfd}, \tilde{\mm})$ splits isomorphically as m.m.s. as   $(\tilde{X}, \tilde{\sfd}, \tilde{\mm}) \simeq (\bar{X}\times \R^{k}, \sfd_{\bar{X}\times \R^{k}}, \mm_{\bar{X}\times \R^{k}}),$ 
where $(\bar{X}, \sfd_{\bar{X}}, \mm_{\bar{X}})$ is a \emph{compact}   $\RCD^{*}(0,N-k)$-space, for some $k \in [0,N]\cap \N$. In particular any automorphism $T$ of  $(\tilde{X},\tilde{\sfd}, \tilde{\mm})$ as m.m.s., can be written as a product 
$T=(T_{1}, T_{2})$ where $T_{1}$ is an automophism of $(\bar{X}, \sfd_{\bar{X}}, \mm_{\bar{X}})$ and $T_{2}$ is an automorphism of $(\R^{k}, \sfd_{\R^{k}}, \LL_{k})$ as m.m.s..

If $\tilde{X}$ is compact we can just set $k=0$ and $\bar{X}:=\tilde{X}$; so we can assume $\tilde{X}$ to be non-compact.
\\ Fix a fundamental domain $\tilde{X}_{0}\subset \tilde{X}$, i.e. a  subset of $\tilde{X}$ which is locally isometric and in bijection with $X$ via the projection map $\pi:\tilde{X}\to X$, and a reference point $\tilde{x}_{0}\in \tilde{X}_{0}$. Note that, in particular, $\tilde{X}_{0}$ is pre-compact, as $X$ is compact by assumption.  Since $\tilde{X}$ is non-compact we can find two sequences $(\tilde{p}_{j})_{j\in \N}, (\tilde{q}_{j})_{j\in \N}\subset \tilde{X}$ such that $\tilde{\sfd}(\tilde{p}_{j},\tilde{q}_{j})\to \infty$. Let $\tilde{C}_{j}\subset \tilde{X}$ be a length minimizing geodesic   joining $\tilde{p}_{j}$ with $\tilde{q}_{j}$ and notice that, up to acting on $\tilde{C}_{j}$ with the  isometry induced by a suitable element of the deck transformations   $G(\tilde{X},X)$, we can assume that  the mid-point of $\tilde{C_{j}}$ is contained in $\tilde{X}_{0}$. Since $\tilde{X}$ is proper (recall that, by Theorem   \ref{thm:UnCovRCD},  $(\tilde{X},\tilde{\sfd}, \tilde{\mm})$ is an $\RCD^{*}(0,N)$-space), by using Ascoli-Arzel\'a Theorem we infer that  for every $R>0$ we can find a sub-sequence of $j$'s such that $\tilde{C}_{j}\cap B_{\tilde{x}_{0}}(R)$ converge uniformly  to a length minimizing geodesic $\tilde{C}^{R}_{\infty}\subset  B_{\tilde{x}_{0}}(R)$ such that the midpoint of $\tilde{C}^{R}_{\infty}$ is contained in the closure of $\tilde{X}_{0}$. Considering now a sequence $R_{i}\to \infty$, via a diagonal argument, we then get the existence of a line $\tilde{C}_{\infty}\subset \tilde{X}$, i.e. a curve defined on $(-\infty,\infty)$ which is length-minimizing on every sub-interval $(a,b)$ with $-\infty<a<b<+\infty$.  Applying now the Splitting Theorem \ref{thm:Split} we infer that $(\tilde{X},\tilde{\sfd},\tilde{\mm})$ splits isomorphically as m.m.s. as $(\tilde{X}'\times \R, \sfd_{\tilde{X}'\times \R}, \mm_{\tilde{X}'\times \R})$ where $(\tilde{X}', \sfd_{\tilde{X}'}, \mm_{\tilde{X}'})$ is an $\RCD^{*}(0,N-1)$-space. If now $\tilde{X}'$ is compact we can just set $\bar{X}:=\tilde{X}'$ and $k=1$. 

 If instead   $\tilde{X}'$  is not compact we can argue analogously as above:  let $(\tilde{p}_{j}')_{j\in \N}, (\tilde{q}_{j}')_{j\in \N}\subset \tilde{X}'$ such that $\sfd_{\tilde{X}'}(\tilde{p}_{j}',\tilde{q}_{j}')\to \infty$ and let  $\tilde{C}_{j}'\subset \tilde{X}'$ be a length minimizing geodesic joining them; up to acting with the isometries induced by deck transfomations   $G(\tilde{X},X)$ and with the isometries induced by the translations along the $\R$ factor in the isometric splitting $\tilde{X}=\tilde{X}'\times \R$, we can assume that $(\tilde{C}_{j}'(1/2), t_{0}) \in \tilde{X}_{0}'$, for some fixed $t_{0}\in \R$. In particular  the mid-points of $\tilde{C}'_{j}$ are contained in a  fixed compact subset in $\tilde{X}'$ and we can then repeat the above diagonal argument producing a line in $\tilde{X}'$. As above, in virtue of the  Splitting Theorem \ref{thm:Split} we infer that $(\tilde{X},\tilde{\sfd},\tilde{\mm})$ splits isomorphically as m.m.s. as $(\tilde{X}''\times \R^{2}, \sfd_{\tilde{X}''\times \R^{2}}, \mm_{\tilde{X}''\times \R^{2}})$ where $(\tilde{X}'', \sfd_{\tilde{X}''}, \mm_{\tilde{X}''})$ is an $\RCD^{*}(0,N-2)$-space. If now $\tilde{X}''$ is compact we can just set $\bar{X}:=\tilde{X}'$ and $k=2$, otherwise we repeat the procedure above to $\tilde{X}''$. Since the Hausdorff dimension of an $\RCD^{*}(0,N)$ space is at most $N$, the iteration can be repeated at most $N$ times;  the claim is thus proved.
 
 The splitting of the group of automorphisms follows by the splitting of the group of isometries which was proved in the smooth setting by Cheeger-Gromoll \cite{ChGr}; for a proof in the non-smooth setting see for instance \cite[Proposition 1]{ShWei}.
 \medskip

\textbf{Step 2}:   Called $\pi_{\bar{X}}: \tilde{X} \to \bar{X}$ the projection on the first coordinate with respect to the splitting $\tilde{X}\simeq \bar{X}\times \R^{k}$, we claim that $(\pi_{\bar{X}} )_{*}(G(\tilde{X},X))$ is finite.

We first claim that the projection of the fundamental domain $\tilde{X}_{0}$ has positive measure in $\bar{X}$, namely 
\begin{equation}\label{eq:mpi>0}
\mm_{\bar{X}}(\pi_{\bar{X}}(\tilde{X}_{0}))>0.
\end{equation}
Indeed if by contradiction it holds $\mm_{\bar{X}}(\pi_{\bar{X}}(\tilde{X}_{0}))=0$, since from Step 1 we know that  $\tilde{X}_{0}\subset \pi_{\bar{X}}(\tilde{X}_{0})\times \R\subset \tilde{X}$ as m.m.s., it would follow that $\tilde{\mm}(\tilde{X}_{0})=0$. But since  $\tilde{X}_{0}\subset \tilde{X}$ is a fundamental domain  this would imply $\mm(X)=\tilde{\mm}(\tilde{X}_{0})=0$, contradicting that $\mm(X)>0$.  This concludes the proof of \eqref{eq:mpi>0}.

Now by Step 1 we know that $\bar{X}$ is a compact $\RCD^{*}(0,N-k)$ space, in particular it has finite volume. Since  the elements of $(\pi_{\bar{X}} )_{*}(G(\tilde{X},X))$  act on $\bar{X}$ by isomorphisms of m.m.s., if by contradiction  $(\pi_{\bar{X}} )_{*}(G(\tilde{X},X))$ was infinite it would follow that $\bar{X}$ contains infinitely many isomorphic copies of $\pi_{\bar{X}}(\tilde{X}_{0})$ and therefore \eqref{eq:mpi>0} would force  $\bar{X}$ to have infinite volume; contradiction.

 \medskip

\textbf{Step 3}: Called $\pi_{\R^{k}}: \tilde{X} \to \R^{k}$ the projection on the second coordinate with respect to the splitting $\tilde{X}\simeq \bar{X}\times \R^{k}$ and  denoted by $G_{\R^{k}}:=(\pi_{\R^{k}} )_{*}(G(\tilde{X},X))$, 
we claim that $G_{\R^{k}}$ contains a normal rank $k$ free abelian subgroup  of finite  index.

By the product structure of $\tilde{X}$, it follows that  the action of  $G_{\R^{k}}:=(\pi_{\R^{k}} )_{*}(G(\tilde{X},X))$ on  $\R^{k}$ is by uniform and discrete isometries. Bieberbach Theorem (see for instance \cite[Section 1.1]{Dekimpe} and references therein) implies then that  $G_{\R^{k}}$ contains a normal  free abelian subgroup  of finite  index and of rank  $l\leq k$.
 Moreover, from the construction above we have that 
\begin{equation}
\bigslant{\R^{k}}{G_{\R^{k}}}  \simeq \pi_{\R^{k}} (\tilde{X}_{0}) \text{ isometric as m.m.s.}. 
\end{equation}
Since by Step 1 we know that  $\tilde{X}_{0}$ is compact, it follows that $\bigslant{\R^{k}}{G_{\R^{k}}}$ is compact too and therefore $l=k$ as desired.   \medskip 

\textbf{Step 4}: Conclusion of the proof.

From Step 3 we know that $G_{\R^{k}}:=(\pi_{\R^{k}} )_{*}(G(\tilde{X},X))$  is a finite group extended by $\Z^{k}$.  The projection map $\pi_{\R^{k}}:\tilde{X}\to \R^{k}$ induces the short exact sequence of groups
$$
0\to \text{Ker}\left( (\pi_{\R^{k}})_{*} \right) \to G(\tilde{X},X)\to G_{\R^{k}}\to 0,
$$
which in turn implies that $G(\tilde{X},X)/  \text{Ker}\left( (\pi_{\R^{k}})_{*} \right) \simeq  G_{\R^{k}}$. Since  $\tilde{X}$ is an isometric splitting $\bar{X}\times \R^{k}$ and since the deck transformations $G(\tilde{X},X)$ are isometries which preserve such a splitting structure, it follows that $\text{Ker}\left( (\pi_{\R^{k}})_{*} \right) \simeq (\pi_{\bar{X}} )_{*}(G(\tilde{X},X))$ which  is finite by Step 2.  Denoting $\psi:=\text{Ker}\left( (\pi_{\R^{k}})_{*} \right),$ the thesis follows.

\end{proof}

\begin{corollary}\label{cor:Rigid}
Let $(X,\sfd,\mm)$ be a compact  $\RCD^{*}(0,N)$-space for some $N \in (1,\infty)$. If the revised fundamental group $\bar{\pi}_{1}(X)$ contains $\lfloor N \rfloor$ independent generators of infinite order, then $(X,\sfd,\mm)$ is isomorphic as m.m.s. to an $N$-dimensional flat Riemannian manifold, i.e. there exists a covering space $(\hat{X},\hat{\sfd},\hat{\mm})$ of $(X,\sfd,\mm)$ which is isomorphic as m.m.s. to a flat torus ${\mathbb T}^{N}=\R^{N}/\Gamma$, for some lattice $\Gamma\subset \R^{N}$.
\end{corollary}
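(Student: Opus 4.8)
The plan is to feed the hypothesis on $\bar{\pi}_1(X)$ into Theorem~\ref{thm:ChGrpi1} and show that it forces the compact factor in the splitting of the universal cover to degenerate to a point, after which a Bieberbach-type argument finishes the proof. So I would begin by applying Theorem~\ref{thm:ChGrpi1}: since $(X,\sfd,\mm)$ is a compact $\RCD^*(0,N)$-space, its universal cover splits as $(\tilde{X},\tilde{\sfd},\tilde{\mm})\simeq(\bar{X}\times\R^k,\sfd_{\bar{X}\times\R^k},\mm_{\bar{X}\times\R^k})$ with $(\bar{X},\sfd_{\bar{X}},\mm_{\bar{X}})$ a compact $\RCD^*(0,N-k)$-space, and the revised fundamental group $\bar{\pi}_1(X)=G(\tilde{X},X)$ is virtually $\Z^k$ (it has a finite normal subgroup $\psi$ with $\bar{\pi}_1(X)/\psi$ a finite extension of $\Z^k$).

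The heart of the argument is to prove $k=\lfloor N\rfloor$. For the inequality $\lfloor N\rfloor\le k$: since $\bar{\pi}_1(X)$ is virtually $\Z^k$, every free abelian subgroup has rank at most $k$; the assumption that $\bar{\pi}_1(X)$ contains $\lfloor N\rfloor$ independent elements of infinite order produces a free abelian subgroup of rank $\lfloor N\rfloor$, whence $\lfloor N\rfloor\le k$. For the reverse inequality I would observe that $\tilde{X}=\bar{X}\times\R^k$ contains an isometric copy of $\R^k$, so $\dim_{\mathcal H}\tilde{X}\ge k$, while $\tilde{X}$ being $\RCD^*(0,N)$ has $\dim_{\mathcal H}\tilde{X}\le N$; as $k\in\N$ this gives $k\le\lfloor N\rfloor$, and therefore $k=\lfloor N\rfloor$.

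Next I would show that $\bar{X}$ is a single point. By the previous step $\bar{X}$ is a compact connected $\RCD^*(0,N-\lfloor N\rfloor)$-space with $N-\lfloor N\rfloor\in[0,1)$, hence $\dim_{\mathcal H}\bar{X}\le N-\lfloor N\rfloor<1$; but if $\bar{X}$ had two distinct points, then joining them by a minimizing geodesic (which exists, $\bar{X}$ being geodesic) would yield a $1$-Lipschitz surjection onto a nondegenerate interval and force $\dim_{\mathcal H}\bar{X}\ge1$, a contradiction. Thus $\bar{X}$ is a point and $(\tilde{X},\tilde{\sfd},\tilde{\mm})\simeq(\R^{\lfloor N\rfloor},\sfd_E,c\,\LL_{\lfloor N\rfloor})$ for some constant $c>0$.

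Finally, $G=\bar{\pi}_1(X)$ acts on $\tilde{X}\simeq\R^{\lfloor N\rfloor}$ freely and properly discontinuously, being the deck group of a connected cover, and cocompactly because $X=\tilde{X}/G$ is compact; since the automorphisms of $(\R^{\lfloor N\rfloor},\sfd_E,\LL_{\lfloor N\rfloor})$ as m.m.s. are exactly the Euclidean isometries (every distance-preserving self-map of $\R^m$ is affine-orthogonal and automatically volume-preserving), $G$ acts by rigid motions, and freeness rules out torsion since a finite-order rigid motion fixes the barycenter of an orbit. Bieberbach's theorem (see for instance \cite{Dekimpe}) then shows $G$ is a torsion-free crystallographic (Bieberbach) group whose translational part $\Gamma$ is a rank-$\lfloor N\rfloor$ lattice of finite index, so $X=\R^{\lfloor N\rfloor}/G$ is a compact flat Riemannian manifold and the intermediate cover $\hat{X}:=\R^{\lfloor N\rfloor}/\Gamma$ is the flat torus $\mathbb{T}^{\lfloor N\rfloor}=\R^{\lfloor N\rfloor}/\Gamma$, as required. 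I expect the main obstacle to be the third step — certifying that a compact $\RCD^*(0,s)$-space with $s<1$ collapses to a point — together with the clean identification of the m.m.s. automorphisms of flat $\R^{\lfloor N\rfloor}$ with Euclidean isometries, which is what lets the classical Bieberbach machinery apply without change.
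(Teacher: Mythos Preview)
Your proof is correct and follows essentially the same route as the paper's, just with considerably more detail. The paper's proof is three lines: it asserts that the hypothesis lets Step~1 of Theorem~\ref{thm:ChGrpi1} be repeated $\lfloor N\rfloor$ times, so the cross section is an $\RCD^*(0,N-\lfloor N\rfloor)$-space and hence a point, and then says ``the thesis follows.'' You unpack each of these steps: you extract $k=\lfloor N\rfloor$ by combining the group-theoretic structure of $\bar\pi_1(X)$ (virtually $\Z^k$, so free abelian subgroups have rank $\le k$) with the dimension bound $k\le N$, you give the Hausdorff-dimension argument for why $\bar X$ collapses, and you spell out the Bieberbach step that the paper suppresses entirely. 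One minor point worth tightening: when $N\in\N$ you get $N-\lfloor N\rfloor=0$, and ``$\RCD^*(0,0)$'' is not literally covered by the paper's definitions; it is cleaner there to invoke the Splitting Theorem~\ref{thm:Split} directly at the last step (the $N'<2$ clause forces the cross section to be a singleton), which is how the paper's iterative Step~1 handles it.
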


\begin{proof}
If $\bar{\pi}_{1}(X)$ contains  $\lfloor N \rfloor$  independent generators of infinite order, then Step 1 in the proof of Theorem \ref{thm:ChGrpi1} can be repeated  $\lfloor N \rfloor$  times, forcing the universal cover $\tilde{X}$ to be isomorphic to $\R^{ \lfloor N \rfloor }$ as m.m.s. since the cross section would be an $\RCD^{*}(N- \lfloor N \rfloor, 0)$ space and therefore a point.  The thesis follows.
\end{proof}

\section{Non-compact $\RCD^*(K,N)$-spaces admit a universal cover}

The goal of the present section is to prove the following result. The proof will be in the same spirit of the arguments of Theorem  \ref{thm:UnCovRCD}, but slightly more complicated since here we have to  deal with relative $\delta$-covers instead of simply $\delta$-covers. This is indeed the main reason why we decided to present both proofs, so that the  reader interested to get the geometric ideas can just read the proof of Theorem  \ref{thm:UnCovRCD}.

\begin{theorem}\label{thm:UnCovRCD-noncompact}
	Let $(X,\sfd,  \mm)$ be an  $\RCD^*(K,N)$-space for some $K\in \R$, $N\in(1, +\infty)$.  Then $(X,\sfd,  \mm)$ admits a universal cover $(\tilde{X},\tilde{\sfd},\tilde{\mm})$ which is itself an  $\RCD^*(K,N)$-space. %Moreover, lifting the distance $\sfd$ and measure $\mm$ to the universal cover $(\tilde{X})$ as explained in Section  \ref{ss:LiftMMS}, the metric measure space $(\tilde{X},\tilde{\sfd},\tilde{\mm})$ is an $\RCD^*(K,N)$ as well.
\end{theorem}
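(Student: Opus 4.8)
The plan is to reduce the non-compact case to the relative stabilization criterion of Theorem~\ref{univstab}, following the same geometric scheme used in the compact case (Theorem~\ref{thm:UnCovRCD}) but carried out inside relative $\delta$-covers. Fix the regular point $x\in X$ supplied by Theorem~\ref{RCD-reg}. By Theorem~\ref{univstab}, it suffices to show that for every $r>0$ there exists $R\ge r$ such that the relative $\delta$-covers $\tilde{B}(x,r,R)^\delta$ stabilize for all $\delta$ small. The $\RCD^*(K,N)$ structure lifts to each $\tilde{B}_R(x)^\delta$ by Lemma~\ref{lem:RCDcover} (applied to the open ball $B_R(x)$, which is itself $\RCD^*(K,N)$ with the restricted structure), so Bishop--Gromov \eqref{eq:BishGrom} is available on these covers; this is the key that makes the packing argument run.

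First I would establish a local analogue of Theorem~\ref{thm:LocStab}: since $x$ is regular, there is $r_x>0$ such that $B_{r_x}(x)$ lifts isometrically into every $\tilde{B}(x,r,R)^\delta$, with $r_x$ \emph{independent of} $\delta$. The proof is the verbatim Abresch--Gromoll argument of Theorem~\ref{thm:LocStab}: one argues by contradiction, produces minimizing geodesics realizing the minimal displacement of nontrivial deck transformations, passes to the tangent cone $\R^k$ via Gromov compactness and \eqref{eq:epsiloni}, and derives the contradiction $4S\le C(K,N)(3S)^{1+\alpha(N)}$ from the excess estimate \eqref{eq:AG} for $S$ small. The only adjustment is bookkeeping: one works inside the relative cover and uses Lemma~\ref{localisom} (the covering map is an isometry on balls of radius $\delta/3$) together with Lemma~\ref{distance} to guarantee that distances seen in $\tilde{B}(x,r,R)^\delta$ agree with intrinsic distances in $X$ near $x$, provided $R$ is taken large enough relative to $r$ (say $R\ge 2r+1$) so that the geodesics involved stay in the interior.

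Next, assume for contradiction that the relative $\delta$-covers do \emph{not} stabilize for this fixed $r$ and any admissible $R$. Then, as in the proof of Theorem~\ref{thm:UnCovRCD}, I would extract a sequence $\delta_j\downarrow 0$ with all $G$-groups distinct and produce, for each $j$, distinct deck transformations $g_1,\dots,g_{j-1}$ of $\tilde{B}(x,r,R)^{\delta_j}$ together with short based loops near points $y_i$, half-way minimizing and contained in balls of radius $\delta_{i-1}$. The displacement estimate \eqref{eq:dtxigitxi} must be replaced by a bound of the form $\sfd_{\tilde{B}^{\delta_j}}(\tilde{x}_j,g_i\tilde{x}_j)\le 2r+2\delta_{i-1}+\cdots$, controlled uniformly in terms of $r$ rather than $\diam(X)$; here one uses that all the relevant base points lie in $B_r(x)$ and that $R$ was chosen large enough that the lifted geodesics do not reach $\partial\tilde{B}_R(x)^{\delta_j}$. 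Then the $j-1$ disjoint isometric copies $B_{r_x}(g_i\tilde{x}_j)$ sit inside a fixed ball of radius comparable to $r$, and Bishop--Gromov \eqref{eq:BishGrom} forces $j-1\le 1/C_{X,r_x}$, a contradiction for $j$ large. This yields stabilization of $\tilde{B}(x,r,R)^\delta$, hence the existence of $\tilde{X}$ by Theorem~\ref{univstab}; that $\tilde X$ is itself $\RCD^*(K,N)$ follows from Lemma~\ref{lem:RCDcover}, since $\tilde X$ is realized as a $\mathcal{U}$-cover.

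The main obstacle I anticipate is the boundary/scale control intrinsic to relative covers. Unlike in the compact case, a loop near $y_i$ could in principle be contractible inside the \emph{larger} ball $B_R(x)$ but not inside $B_r(x)$, or a minimizing geodesic in $X$ between lifted points could exit $B_R(x)$, so the isometry statements of Lemma~\ref{distance} and Lemma~\ref{localisom} only apply after one verifies the geodesics stay well inside. The delicate point is therefore the \emph{uniform} choice of $R=R(r)$ guaranteeing that the displacements $\sfd_{\tilde{B}^{\delta_j}}(\tilde{x}_j,g_i\tilde{x}_j)$ remain bounded (by a constant depending only on $r$, not on $j$) and that the competing disjoint balls $B_{r_x}(g_i\tilde x_j)$ genuinely lie in the interior of the relative cover where volume comparison is legitimate; reconciling the half-way-minimizing construction with the cutoff at $\partial B_R(x)$ is where the argument is more technical than in the compact setting.
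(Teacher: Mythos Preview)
Your overall architecture is right---Theorem~\ref{univstab} together with a relative version of Theorem~\ref{thm:LocStab} and a packing argument---but there is a genuine gap in the step you flag yourself. When the relative covers $\tilde{B}(x,r,R)^\delta$ fail to stabilize, the short noncontractible loops that witness this are loops in $B_R(x)$, based at points $y_i\in B_R(x)$; nothing forces $y_i\in B_r(x)$. Your displacement bound $\sfd_{\tilde{B}^{\delta_j}}(\tilde{x}_j,g_i\tilde{x}_j)\le 2r+2\delta_{i-1}+\cdots$ presumes exactly this, and without it the packing balls $B_{r_x}(g_i\tilde{x}_j)$ need not sit inside any fixed-radius ball, nor need they stay away from the boundary of the relative cover where Bishop--Gromov is unavailable. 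Enlarging $R$ does not help: the larger you take $R$, the farther the $y_i$ may drift.

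The paper resolves this with an extra layer (Theorem~\ref{samedelta2}). One does \emph{not} fix the regular point as the packing center. Instead one chooses each $x_j$ to be the point of $B_R(x)$ \emph{closest to $x$} supporting a short loop nontrivial in $\tilde{B}(x,R)^{\delta_j}$, so that $\sfd(x,x_j)$ is nondecreasing, and passes to a subsequential limit $x_\infty$. If $x_\infty\in\partial B(x,R)$, then for every $R'<R$ eventually all short loops live outside $B(x,R')$, which yields the stabilization of $\tilde{B}(x,R',R)^\delta$ directly (Case~II). If $x_\infty$ is interior, one sets up an \emph{auxiliary} relative cover $\tilde{B}(x_\infty,\bar r,\bar R)^\delta$ with $B(x_\infty,\bar R)\subset B_R(x)$; the short loops, now accumulating at $x_\infty$, lie well inside, the displacement is bounded by $5\bar r$, and the packing is carried out there using a regular point $p$ near $x_\infty$ (not the original $x$) together with Theorem~\ref{thm:LocStab-noncompact}. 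This dichotomy is the missing ingredient in your sketch.
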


\begin{remark}\label{rem:SimplicationNonCompact}
In \cite{SW2}, the analogous result is shown for Gromov-Hausdorff limits of manifolds $M^n$ with Ricci curvature uniformly bounded from below. The proof in \cite{SW2} is much more involved as the authors had to construct a measure for the limit $\delta$-cover; indeed  it was not clear the existence of  a measure satisfying Bishop-Gromov comparison  on the relative $\delta$-covers of the limit spaces, property which is used in a crucial way in the proof.  
\end{remark}

\begin{remark}When $(X,\sfd)$ is semi-locally simply connected, the universal cover of $X$ can be obtained as Gromov-Hausdorff limit of universal cover of larger and larger balls, see \cite[Proposition 1.2]{Ennis-Wei2010}. Since here we do not want to assume this extra hypothesis   we will argue differently.
\end{remark}
We need the following local stability of relative $\delta$-cover at regular points, which is the corresponding result  of \cite[Theorem 3.13]{SW2} proved there for limit spaces, compare also with Theorem~\ref{thm:LocStab}. 

\begin{theorem}\label{thm:LocStab-noncompact}
	Let $(X,\sfd, \mm)$ be an  $\RCD^*(K,N)$ space for some $K\in \R, N\in (1,\infty)$ and let $x \in X$ be a  regular point. Then for any 
	$100 \le 10 \bar r \le \bar R$ there
	exists $r_x(\bar r,\bar R)>0$, such that for
	all $\delta>0$, $B(x,r_x)$ lifts isometrically
	to $\tilde{B}(x,\bar r,\bar R)^\delta$.
\end{theorem}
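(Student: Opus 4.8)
The plan is to adapt the contradiction argument of Theorem~\ref{thm:LocStab} to the relative $\delta$-cover setting, using the Abresch--Gromoll excess estimate \eqref{eq:AG} exactly as in the compact case. Fix the regular point $x$ and parameters $100 \le 10\bar r \le \bar R$. Suppose for contradiction that no uniform radius works: then for every $r>0$ there is $\delta_r>0$ such that $B(x,r)$ does \emph{not} lift isometrically to $\tilde{B}(x,\bar r,\bar R)^{\delta_r}$. As in the compact proof, this produces sequences $r_i \downarrow 0$, $\delta_i := \delta_{r_i}$, and nontrivial deck transformations $g_i$ of the relative cover $\tilde{B}(x,\bar r,\bar R)^{\delta_i}$, realizing the minimal displacement of a chosen lift $\tilde{x}_i$, with $\sfd_i := \sfd_{\tilde{B}(x,\bar r,\bar R)^{\delta_i}}(\tilde{x}_i, g_i \tilde{x}_i) \in (0, 2r_i)$. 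One must check that the deck transformation group of the relative cover still acts suitably so that such a minimal nontrivial displacement exists; this is the first point where the relative setting differs, but since $\sfd_i \to 0$ the geodesic $\tilde\gamma_i$ from $\tilde{x}_i$ to $g_i\tilde{x}_i$ stays in a tiny ball about $\tilde{x}_i$, well inside the region where Lemma~\ref{localisom} guarantees the covering map is an isometry on balls of radius $\delta_i/3$.

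First I would use Lemma~\ref{localisom} together with the Halfway Lemma (as in \cite[Lemma 5]{So00}) to arrange that $\gamma_i := \pi_i(\tilde\gamma_i) \subset X$ is halfway minimizing, i.e. $\sfd_X(\gamma_i(0),\gamma_i(1/2)) = \sfd_i/2$. Since $x$ is a regular point, Gromov compactness gives, along a subsequence, pointed Gromov--Hausdorff convergence of the rescalings $(X, \sfd_i^{-1}\sfd, x)$ to a tangent cone $(\R^k, \sfd_{\R^k}, 0^k)$, so that $\sfd_{GH}(B_{10\sfd_i}(x), B_{10\sfd_i}(0^k)) \le \varepsilon_i \sfd_i$ with $\varepsilon_i \to 0$. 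Then, exactly as in \eqref{eq:barx}--\eqref{eq:Contr1}, choose an auxiliary point $\bar{x}_i \in \partial B_{(1/2 + 2S)\sfd_i}(x) \subset X$ whose image in the Euclidean cone lies in the appropriate annulus, giving $\sfd(\gamma_i(1/2), \bar{x}_i) \le (3S + 2\varepsilon_i)\sfd_i$. Lifting $\bar{x}_i$ to $\tilde{\bar{x}}_i$ in the relative cover along a minimizing geodesic from $\tilde\gamma_i(1/2)$, I obtain the lower bound \eqref{eq: LBExc} on the excess $e_{\tilde{x}_i, g_i\tilde{x}_i}(\tilde{\bar{x}}_i) \ge 4S\sfd_i$, while \eqref{eq:AG} gives the upper bound $e_{\tilde{x}_i, g_i\tilde{x}_i}(\tilde{\bar{x}}_i) \le C(K,N)(3S)^{1+\alpha(N)}\sfd_i$. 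Choosing $S$ small (precisely $S \in (0,\bar{S}(K,N))$) yields $4S \le C(K,N)(3S)^{1+\alpha(N)}$, a contradiction.

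The \textbf{main obstacle} is verifying that the Abresch--Gromoll inequality \eqref{eq:AG} may legitimately be applied on the \emph{relative} cover $\tilde{B}(x,\bar r,\bar R)^{\delta_i}$, which is only a local piece and is \emph{not} itself a complete $\RCD^*(K,N)$-space. The point is that Lemma~\ref{localisom} makes the covering projection an isometry on balls of radius $\delta_i/3$, and by Lemma~\ref{distance} the intrinsic metric on the relevant interior region agrees with the restriction of $\sfd_X$; hence geodesics of length $\sim \sfd_i \to 0$ between points near $\tilde{x}_i$ and their excess are faithfully measured either downstairs in the genuine $\RCD^*(K,N)$-space $X$ or in the lift of a full $\delta_i$-cover of an auxiliary ball $B_{\bar R}(x)$, which \emph{is} $\RCD^*(K,N)$ by Lemma~\ref{lem:RCDcover}. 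This is exactly the role played by the condition $100 \le 10\bar r \le \bar R$: it buffers $B(x,\bar r)$ away from $\partial B_{\bar R}(x)$ so that, for $i$ large, all points $\tilde{x}_i, g_i\tilde{x}_i, \tilde\gamma_i(1/2), \tilde{\bar{x}}_i$ lie comfortably in the interior where the cover is $\RCD^*(K,N)$ and the excess estimate is valid. Once this localization is set up carefully, the remainder of the argument is the verbatim quantitative contradiction of Theorem~\ref{thm:LocStab}, and the key geometric insight — short closed based loops cannot survive in a Euclidean tangent cone — is unchanged.
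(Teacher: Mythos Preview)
Your proposal is correct and follows essentially the same approach as the paper. The paper's own proof is a single short paragraph: it observes that by choosing $r_x\le 1$ all points and curves appearing in the argument of Theorem~\ref{thm:LocStab} lie in $B(x,4)$, which is far from $\partial B(x,\bar r)$ since $\bar r\ge 10$; then Lemma~\ref{distance} ensures the restricted metric on $B(x,4)$ from $B(x,\bar r)$ agrees with $\sfd_X$, and similarly in the cover, so the proof of Theorem~\ref{thm:LocStab} goes through verbatim. You have identified exactly this localization mechanism and spelled it out in more detail, including the role of the buffer condition $100\le 10\bar r\le \bar R$ and the application of the Abresch--Gromoll estimate in the interior region; the overall strategy and the key geometric input are the same.
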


The proof of Theorem~\ref{thm:LocStab-noncompact}
uses the Abresch-Gromoll Excess estimate \eqref{eq:AG} on the relative
$\delta$ cover as in Theorem~\ref{thm:LocStab}
except that now our covers have boundary. By choosing $r_x \le 1$, all points and curves involved in the proof of Theorem~\ref{thm:LocStab} lie in $B(x, 4)$, far away from the  boundary $\partial B(x, \bar{r})$, and similarly for the cover. By Lemma~\ref{distance}, the restricted distance on $B(x, 4)$ from
$B(x,\bar r)$ and from $X$ are the  same. So the proof of
Theorem~\ref{thm:LocStab}  can be repeated verbatim. 
%The only tricky point is the Abresh-Gromoll inequality, since it is not clear if $\tilde{B}(x,\bar r,\bar R)^\delta$ is an $\RCD^{*}(K,N)$ space, as $B(x,\bar{R})$ may not be $\RCD^{*}(K,N)$. But this is not a real problem, since  $B(x, 4)$ satisfies the the $\RCD^{*}(K,N)$ condition locally and the proof of the Abresch-Gromoll inequality can be repeated verbatim. 

Now we prove stability of relative $\delta$-covers for all points. 
\begin{theorem} \label{samedelta2}
	Let $(X,\sfd, \mm)$ be an  $\RCD^*(K,N)$ space for some $K\in \R, N \in (1,\infty)$.	For all $R>0$ and $x \in X$ one of the following two statements
	holds true:
	
	I: There exists $\delta_{x,R}$ depending on
	$X, x, R$ such that 
	\be
	\tilde{B}(x,R)^\delta=\tilde{B}(x,R)^{\delta_{x,R}} \qquad \forall \delta< \delta_{x,R}.
	\ee
	
	II. For all $R'<R$ there exists $\delta_{x,R,R'}$ depending
	on $X,x,R,R'$ such that
	\be
	\tilde{B}(x,R',R)^{\delta_{x,R,R'}}=\tilde{B}(x,R',R)^\delta
	\qquad \forall \delta< \delta_{x,R,R'}.
	\ee
\end{theorem}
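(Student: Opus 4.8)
The plan is to run, \emph{mutatis mutandis}, the argument of Theorem~\ref{thm:UnCovRCD}, replacing $\delta$-covers by relative $\delta$-covers and Theorem~\ref{thm:LocStab} by its relative analogue Theorem~\ref{thm:LocStab-noncompact}. Fix $x\in X$ and $R>0$. If the covers $\tilde B_R(x)^\delta$ of the open ball stabilize as $\delta\downarrow 0$, then statement I holds and we are done; hence I may assume they do \emph{not} stabilize and aim at statement II. So fix $R'<R$ and argue by contradiction, assuming the relative covers $\tilde B(x,R',R)^\delta$ do not stabilize. As in \eqref{eq:Assdeltai}--\eqref{eq:gammaiHW}, I would extract a sequence $\delta_i\downarrow 0$ with $\delta_i\le 4\delta_{i-1}$ and, for each $i$, a nontrivial deck transformation $g_i$ of the relative cover represented by a half-way minimizing loop $\gamma_i$ based at some $y_i\in B(x,R')$ and contained in $B_{\delta_{i-1}}(y_i)$, which is unwrapped precisely at scale $\delta_i$.

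Passing to the finer covers $\tilde B(x,R',R)^{\delta_j}$ for $j\ge i$ (which dominate the earlier ones) and choosing minimal-displacement lifts exactly as in \eqref{eq:distXiXj}--\eqref{eq:dtxigitxi}, the displacement $\sfd(\tilde p,g_i\tilde p)$ of a fixed reference lift $\tilde p$ gets bounded by $2\,\sfd(p,y_i)+2\delta_{i-1}$, hence by a constant comparable to $R'$. The heart of the proof is then the packing argument. I would fix a \emph{regular} reference point $p$, which exists by Theorem~\ref{RCD-reg} since $\mm$-a.e.\ point is regular, and apply Theorem~\ref{thm:LocStab-noncompact} to get $r_p>0$ for which $B(p,r_p)$ lifts isometrically to the relative cover for every $\delta$. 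This yields, for each large $j$, a family $\{B_{r_p}(g_i\tilde p)\}_{i<j}$ of pairwise disjoint isometric copies of $B_{r_p}(p)$, all contained in a ball $B_\rho(\tilde p)$ of radius $\rho$ comparable to $R'$. On the region at distance $>\rho$ from the lift of $\partial B_R(x)$ the relative cover is locally isometric, as a m.m.s., to $X$ — this is where Lemma~\ref{distance} and Lemma~\ref{localisom} enter, together with Lemma~\ref{lem:RCDcover} applied locally — so there the cover is $\RCD^*(K,N)$ and the Bishop--Gromov inequality \eqref{eq:BishGrom} applies to the concentric pair $B_{r_p}(\tilde p)\subset B_\rho(\tilde p)$. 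The same volume computation as in Theorem~\ref{thm:UnCovRCD} then forces $j-1\le 1/C_{X,r_p}$ for all $j$, a contradiction; hence the relative covers stabilize, which is II. Equivalently: if I fails, the loops witnessing non-stabilization must escape toward $\partial B_R(x)$, so for every fixed $R'<R$ they eventually leave $B(x,R')$ and the interior relative cover stabilizes.

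The main obstacle — and the sole genuine difference from the compact case — is that relative covers have a boundary, so Bishop--Gromov is available only in their $\RCD$ interior. I therefore expect the delicate point to be guaranteeing that every point, geodesic and ball entering the packing stays a definite distance $\gtrsim R-R'$ away from the lifted boundary $\partial B(x,R')$, so that by Lemma~\ref{distance} the metric and the measure there are exactly the ambient $\RCD^*(K,N)$ ones and \eqref{eq:BishGrom} may be invoked. This is precisely the role of the room built into Theorem~\ref{thm:LocStab-noncompact} (the normalization $100\le 10\bar r\le\bar R$) and of the device, used there, of confining all relevant objects to a fixed interior ball far from the boundary.

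A secondary technical care is that the reference point $p$ must be taken regular rather than the possibly singular center $x$, and that the displacements $\sfd(\tilde p,g_i\tilde p)$ must be estimated purely in terms of $R'$ so that the enclosing ball $B_\rho(\tilde p)$ indeed fits into the interior. Once Theorem~\ref{samedelta2} is established in this dichotomous form, it feeds directly into the criterion of Theorem~\ref{univstab}: given $r>0$ one picks $R$ with enough room above $r$, and in either alternative (I or II, with $R'=r$) the relative cover $\tilde B(x,r,R)^\delta$ stabilizes, yielding the universal cover.
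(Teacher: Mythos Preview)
Your packing argument, as written, does not close. You take a regular point $p\in B(x,R')$, obtain from Theorem~\ref{thm:LocStab-noncompact} a radius $r_p$, and then enclose the disjoint balls $B_{r_p}(g_i\tilde p)$ in a ball $B_\rho(\tilde p)$ with $\rho$ of size roughly $4R'$. But the only room you have to the boundary of the $\delta$-cover of $B_R(x)$ (namely the lift of $\partial B_R(x)$, \emph{not} of $\partial B(x,R')$) is $R-R'$, and since $R'<R$ is arbitrary this can be much smaller than $\rho$; for $R'$ close to $R$ the ball $B_\rho(\tilde p)$ necessarily meets the boundary and you lose the $\RCD^*(K,N)$ structure needed for \eqref{eq:BishGrom}. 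The ``room built into Theorem~\ref{thm:LocStab-noncompact}'' only controls the small radius $r_p$, not the large radius $\rho$, so it does not resolve this.

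The paper's proof avoids this by a different architecture. It assumes \emph{both} I and II fail and, from the failure of I alone, produces the loops $C_j$ in the full covers $\tilde B(x,R)^{\delta_j}$. The key device you are missing is the monotone ``closest to $x$'' selection of the base points $x_j$: one takes $x_j$ to be the point nearest $x$ whose $\delta_{j-1}/10$-ball contains a noncontractible loop, so that $\sfd(x,x_j)$ is nondecreasing and a subsequence converges to some $x_\infty\in\overline{B_R(x)}$. If $x_\infty\in\partial B_R(x)$, the monotone minimality immediately forces every sufficiently small loop meeting $B(x,R')$ to be contractible in $B_R(x)$, giving II for \emph{every} $R'$---this is exactly your ``loops escape to infinity'' heuristic, but it needs the monotone construction to be justified. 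If instead $x_\infty$ lies in the interior, one \emph{re-centers}: choose $\bar R$ with $B(x_\infty,\bar R)\subset B_R(x)$ and set $\bar r=\bar R/10$, then run the packing in the fresh relative cover $\tilde B(x_\infty,\bar r,\bar R)^{\delta_j}$, where the enclosing ball has radius $\le 5\bar r+\delta_0<\bar R-\bar r$ by construction. This manufactured factor-of-ten room is what makes Bishop--Gromov available; without the re-centering at $x_\infty$ your fixed pair $(R',R)$ carries no such margin.
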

The proof should be compared  with \cite[Theorem 3.15]{SW2}, the main difference is that  here we do not need to go to the limit cover; this is a  quite useful  simplification in the arguments. 
\begin{proof}
Suppose neither I nor II holds.
Then there exists $x$, $R$ and $\delta_i\downarrow 0$ such
that $\tilde{B}(x,R)^{\delta_i}$ are all distinct.  Up to extracting a subsequence, we can assume that
$\delta_1 \leq R/10, \delta_i > 10 \delta_{i+1}$
and  all $\tilde{B}(x,R)^{\delta_i}$  and $G(x,R,\delta_i)$
are distinct.   In particular there are non-trivial elements of
$G(x,R, \delta_i)$ which are trivial in $G(x,R, \delta_{i-1})$.
Thus, for every $i \in \N$, we can find  $x_i\in B(x,R)$ such that  $B_{x_i}(\delta_{i-1})$ contains
a non-contractible loop, $C_i$, which lifts non-trivially in
$\tilde B(x,R)^{\delta_i}$.

In fact we can choose $x_1$ to be the point
closest to $x$ such that $B_{x_1}(R/10)$ contains a non-contractible
loop and then choose $\delta_1\in (0,R/10]$ as small as possible
such that $B_{x_1}(R/10)$ contains a loop $C_1$ which lifts non-trivially to
$\tilde B(x,R)^{\delta_1}$.  We can then choose iteratively
$x_j$  to be the point closest to $x$ such that $B_{x_j}(\delta_{j-1}/10)$
contains a non-contractible loop.  Then set $\delta_{j}\in (0,\delta_{j-1}/10]$
as small as possible so that $B_{x_j}(\delta_{j})$ contains a
loop $C_j$ which lifts non-trivially to $\tilde B(x,R)^{\delta_j}$.  Note that by construction 
$\sfd_{B(x,R)}(x,x_j)$ is a non-decreasing sequence.

Since $\RCD^{*}(K,N)$ spaces  are proper for $N<\infty$,  we can find  a sub-sequence of the $x_i$ converging to some point $x_\infty$ in $B(x,R)$. There are two possibilities: either  $x_\infty \in \partial B(x,R)$ or $x_{\infty}\in B_{x}(R)$.

If $x_\infty\in \partial B(x,R)$, then for any $0<R'<R$, we know that
there exists $N_1$ sufficiently large such that
$\sfd_{B(x,R)}(x,x_j)>R'+(R-R')/2$ for all $j\ge N_1$. Moreover it is clear that  $\delta_{(j-1)/10} <(R-R')/2$ for
all $j\ge N_2$, for some  $N_2\geq N_1$. 
\\By the choice of the sequence of $x_j$,
it follows  that if $C$ is a loop contained in $B(x',\delta)$
where $\delta\le \delta_{(N_2-1)/10}$ and
$B(x',\delta)\cap B(x,R')$ is nonempty, then $C$ is contractible
in $B(x,R)$.
Thus
\be
\tilde{B}(x,R',R)^{\delta_{x,R,R'}}=\tilde{B}(x,R',R)^\delta  \ \
\forall \delta< \delta_{(N_2-1)/10} =: \delta_{x,R,R'}.
\ee
This implies Case II which we have assumed to be false. Therefore $x_\infty$ is not in the boundary of $B(x,R)$, and we proceed
to find a contradiction.

Let
$\bar{R}>0$ be defined such that $B(x_\infty, \bar{R})\subset B(x,R)$
and let $\bar{r}=\bar{R}/10>0$.  Clearly, up to throwing away finitely many  $j$, we can assume that   the loops $C_j$ are contained 
in $B(x_\infty, \bar{r}/6)$ for every $j \in \N$.  Since by construction $C_j$ are contained in $B_{x_j}(\delta_{j-1})$,   they lift as closed curves to
$\tilde{B}(x_{\infty},\bar{r},\bar{R})^{\delta_{i}}$ for every $i<j$.  On the other hand, as they lift non-trivially to
$\tilde B(x,R)^{\delta_{j}}$, they also lift non-trivially to
$\tilde{B}(x_\infty,\bar{r},\bar{R})^{\delta_{j}}$.

Since without loss of generality we can assume $C_j$ to be half-way minimizing, % must lift to a union of balls $B_{g\tilde{x}_i}(\delta_{i-1})$ in $\tilde{B}(x_\infty,\bar{r},\bar{R})^{\delta_i}$, 
there exists $g_j$ non-trivial in
$G(x_\infty,\bar{r},\bar{R},\delta_j)$ such that
\be
\sfd_{\tilde{B}(x_\infty,\bar{r},\bar{R})^{\delta_j}} (g_j\tilde{x}_j, \tilde{x}_j) < 2 \delta_{j-1}.
\ee
Let $\alpha_{j}$ be the projection of the minimal curve from
$g_j\tilde{x}_j$ to $\tilde{x}_j$.  Then $L(\alpha_j)<2\delta_{j-1}<\bar{r}/6$
and
\be
\alpha_j\subset B_{x_j}(2\delta_{j-1})\subset B(x_\infty, \bar{r}/6+2\delta_{j-1})
\subset B(x_\infty, \bar{r}/3).
\ee
The loop $\alpha_j$ represents an element $g_j$
of ${\pi}_1(B(x_\infty,\bar{r}))$ which is mapped non-trivially into
$G(x_\infty,\bar{r},\bar{R},\delta_j)$ and
trivially into $G(x_\infty,\bar{r},\bar{R},\delta_{i})$ for every $i<j$.
Furthermore, for any $q\in B(x_\infty,\bar{r})$, letting $\tilde{q}\in \tilde{B}(x_\infty,\bar{r},\bar{R})^{\delta_j}$
be the lift of $q$ closest  to $\tilde{x}_j$ and letting $\tilde{x}_{i} \in \tilde{B}(x_\infty,\bar{r},\bar{R})^{\delta_j}$ be the lift of $x_{i}$ closest to $\tilde{q}$,
we have 
\begin{eqnarray*}
	\sfd_{\tilde{B}(x_\infty,\bar{r},\bar{R})^{\delta_j}}(\tilde{q}, g_i\tilde{q})
	& \le & \sfd_{\tilde{B}(x_\infty,\bar{r},\bar{R})^{\delta_j}}(\tilde{q},
	\tilde{x}_i)
	+\sfd_{\tilde{B}(x_\infty,\bar{r},\bar{R})^{\delta_j}}(\tilde{x}_i,
	g_i\tilde{x}_i)
	+\sfd_{\tilde{B}(x_\infty,\bar{r},\bar{R})^{\delta_j}}(g_i\tilde{x}_i,
	g_i\tilde{q}) \\ &\le& 2 \bar{r} +  L(\alpha_i) + 2\bar{r} \le
	5\bar{r}. \end{eqnarray*}

Therefore, for any $j\in \N$,
there are $j-1$  distinct elements in
$G(x_\infty,\bar{r},\bar{R})^{\delta_j}$ satisfying 
\begin{equation}\label{eq:lgidj}
l(g_i, \delta_j) := \inf_{q\in B(x_\infty,\bar{r})}
\sfd_{\tilde{B}(x_\infty,\bar{r},\bar{R})^{\delta_j}}(\tilde{q}, g_i\tilde{q}) \leq 5\bar{r}.
\end{equation}

On the other hand we claim that the total number of elements in
$G(x_\infty, \bar{r}, \bar{R})^\delta$ of
$\delta$-length $\leq 5\bar{r}$ is uniformly bounded for all $\delta$ in
terms of the  geometry  of $B(x_\infty,\bar{r})$.
\\To show this claim, let us look
at the lift of a regular point $p \in B(x_\infty,\bar{r})$ in the cover
$B(x_\infty,\bar{r},\bar{R})^{\delta/2}$.
In virtue of Theorem~\ref{thm:LocStab-noncompact}, we can find  a $\delta_p>0$ such that
the ball of radius $\delta_p$ about $p$ is isometrically lifted
to disjoint balls of radius $\delta_p$ in $\tilde{B}(x_\infty,\bar{r},\bar{R})^\delta$.
Let
\be
\delta_0=\min\{\delta_p, \bar{r}\}
\ee
and denote with  $N\geq j-1$ the number of distinct elements in
$G(x_\infty, \bar{r}, \bar{R})^\delta$ of
$\delta$-length $\leq 5\bar{r}$. Note that $g B(\tilde{p}, \delta_0)$
is contained in
$B(\tilde{p}, 5\bar{r}+\delta_0 )\subset
\tilde{B}(x_\infty,\bar{r},\bar{R})^\delta$ for all
$g \in G(x_\infty, \bar{r}, \bar{R})^\delta$
with $l(g, \delta) \leq 5\bar{r}$. Hence there are $N+1$ isometric disjoint balls of
radius $\delta_0$ contained in a ball of radius
$5\bar{r}+\delta_0$ in $\tilde{B}(x_\infty,\bar{r},\bar{R})^\delta$.  Here we have included the
center ball as well.
\\Since  $5\bar{r}+\delta_0<\bar{R}-\bar{r}$, the ball $B(\tilde{q}, 5\bar{r}+\delta_0)$ does not touch the boundary of $\tilde{B}(x_\infty,\bar{r},\bar{R})^\delta$. Therefore  \eqref{eq:BishGrom} holds  on  $B(\tilde{q}, 5\bar{r}+\delta_0)$ and we get
\be
j\leq N+1 \leq  \frac{\mm (B(\tilde{q}, 5\bar{r}+\delta_0))}
{\mm (B(\tilde{q}, \delta_0))}
\le \frac{1}{\Lambda_{K,N} (\delta_0,   5\bar{r}+\delta_0)},
\ee
which gives us a contradiction for $j$ large enough.	
\end{proof}
\medskip

\textbf{Proof of Theorem \ref{thm:UnCovRCD-noncompact}}
The existence of a universal  cover $\tilde{X}$ follows by the combination of Theorem \ref{univstab} with  Theorem \ref{samedelta2} (note that if Case I holds in Theorem \ref{samedelta2} we can just take $r=R$ in Theorem \ref{univstab}).
Using the construction of Section  \ref{ss:LiftMMS} we can lift the metric $\sfd$ and the measure $\mm$ to a metric $\tilde{\sfd}$ and a  measure $\tilde{\mm}$ on $\tilde{X}$, so that the universal cover is a metric measure space $(\tilde{X}, \tilde{\sfd}, \tilde{\mm})$.
We now claim that $(\tilde{X}, \tilde{\sfd}, \tilde{\mm})$ is an $\RCD^{*}(K,N)$ space. 
To prove such claim we observe that the universal cover $(\tilde{X},\tilde{\sfd})$ is constructed as covering space $\tilde{X}_{\mathcal U}$ associated to an open cover ${\mathcal U}$ of $X$ with the property that for every $x\in X$ there exists $U_{x} \in \mathcal{U}$ such  that $U_{x}$ is lifted homeomorphically to any covering space of $(X,\sfd)$, see \cite[Lemma 2.4, Theorem 2.5]{SW2}.  The claim is then a consequence of Lemma \ref{lem:RCDcover}.
\hfill$\Box$

\subsection{Applications to the revised fundamental group of a non-compact $\RCD^*(0,N)$-space}

\subsubsection{Extension of Milnor's Theorem  to $\RCD^{*}(0,N)$-spaces}
We can extend to $\RCD^*(0,N)$-spaces Milnor's result   \cite{Mil} about the polynomial growth of finitely generated subgroups of the fundamental group of a non-compact manifold with non-negative Ricci curvature, for the  extension to Ricci-limits see \cite{SW2}. 

The idea is to use the polynomial  volume growth in the universal cover ensured by the curvature condition  in order to get information on the growth of the revised fundamental group. Let us start with some preliminary notions.

Let $G$ be a finitely generated group: $G=\langle g_{1},\ldots, g_{n}\rangle$. We define the $r$-neighborhood with respect to the set of generators ${\mathfrak g}:=\{g_{1},\ldots,g_{n}\}$ as
\begin{equation}\label{eq:defUgr}
U_{{\mathfrak g}}(r):=\{g \in G \,:\, g=g^{i_{1}}_{\alpha_{1}}\cdot g^{i_{2}}_{\alpha_{2}}\cdot \ldots \cdot g^{i_{k}}_{\alpha_{k}}, \; \text{with }  \alpha_{j}\in\{1,\ldots,n\},  \text{for }j=1,\ldots,k, \text{ and } \; \sum_{j=1}^{k} |i_{j}|\leq r  \}, 
\end{equation}
i.e. $U_{{\mathfrak g}}(r)$ is made of words of length at most $r$ with respect to the generating family ${\mathfrak g}$.

\begin{definition}
The group $G$ is said to have \emph{polynomial growth of degree $s\in (1,\infty)$} if there exist a set of generators ${\mathfrak g}$, and real numbers $C, r_{0}\geq 1$  such that $|U_{{\mathfrak g}}(r)|\leq C r^{s}$ for every $r\geq r_{0}$.
\end{definition}

\begin{theorem}\label{thm:Milnor}
Let $(X,\sfd, \mm)$ be an $\RCD^*(0,N)$-space for some $N\in(1,\infty)$. Then any finitely generated subgroup of the revised fundamental group $\bar{\pi}_1(X)$ has polynomial growth of degree at most $N$.
\end{theorem}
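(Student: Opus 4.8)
The plan is to adapt Milnor's classical argument to the non-smooth setting, exploiting the existence of the universal cover (Theorem~\ref{thm:UnCovRCD-noncompact}), the fact that it is again $\RCD^*(0,N)$, and the polynomial volume growth guaranteed by Bishop-Gromov \eqref{eq:BGK0}. The heart of Milnor's proof is a comparison between the word-metric growth of a finitely generated subgroup $\Gamma \le \bar\pi_1(X)$ and the volume growth of orbits in the universal cover. Let $(\tilde X, \tilde\sfd, \tilde\mm)$ be the universal cover and recall that $\bar\pi_1(X) = G(\tilde X, X)$ acts on $\tilde X$ by deck transformations, which are isometries of $(\tilde X, \tilde\sfd)$ preserving $\tilde\mm$ (by the construction in Section~\ref{ss:LiftMMS}). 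Fix a finitely generated subgroup $\Gamma = \langle g_1, \ldots, g_n \rangle \le \bar\pi_1(X)$ with generating family ${\mathfrak g}$, and fix a base point $\tilde x_0 \in \tilde X$.

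\emph{First}, I would set $D := \max_{1 \le \alpha \le n} \tilde\sfd(\tilde x_0, g_\alpha \tilde x_0)$. By the triangle inequality and the fact that each $g_\alpha$ is an isometry, any word $g = g_{\alpha_1}^{i_1} \cdots g_{\alpha_k}^{i_k}$ of length $\sum_j |i_j| \le r$ moves $\tilde x_0$ at most distance $Dr$, so $g \tilde x_0 \in B_{Dr}(\tilde x_0)$. Thus the orbit points $\{g \tilde x_0 : g \in U_{\mathfrak g}(r)\}$ all lie in the ball $B_{Dr}(\tilde x_0) \subset \tilde X$.

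\emph{Second}, I would produce a uniform separation for the orbit. Since the action of the deck group is properly discontinuous and by free action (distinct deck transformations act without common fixed points), there exists $\rho > 0$ with $\tilde\sfd(\tilde x_0, g \tilde x_0) \ge 2\rho$ for all $g \in \Gamma \setminus \{1\}$; more carefully, one localizes around a lift of a point whose ball lifts homeomorphically, as in the local-isometry discussion preceding the proof, so that the balls $B_\rho(g \tilde x_0)$ for distinct $g \in \Gamma$ are pairwise disjoint. Each such ball sits inside $B_{Dr + \rho}(\tilde x_0)$. Since the deck transformations preserve $\tilde\mm$, every ball $B_\rho(g\tilde x_0)$ has the same measure $\tilde\mm(B_\rho(\tilde x_0)) =: v_0 > 0$. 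Counting,
\[
|U_{\mathfrak g}(r)| \cdot v_0 \le \tilde\mm\big(B_{Dr+\rho}(\tilde x_0)\big).
\]
Applying the Bishop-Gromov bound \eqref{eq:BGK0} on $(\tilde X,\tilde\sfd,\tilde\mm)$, which is $\RCD^*(0,N)$ by Theorem~\ref{thm:UnCovRCD-noncompact}, one controls the numerator: comparing $B_{Dr+\rho}(\tilde x_0)$ against a fixed ball $B_{\rho}(\tilde x_0)$ gives $\tilde\mm(B_{Dr+\rho}(\tilde x_0)) \le \big((Dr+\rho)/\rho\big)^N \tilde\mm(B_\rho(\tilde x_0))$, whence
\[
|U_{\mathfrak g}(r)| \le \Big(\frac{Dr+\rho}{\rho}\Big)^N = \Big(\frac{D}{\rho}r + 1\Big)^N \le C\, r^N
\]
for $r \ge r_0 = 1$ and a suitable constant $C = C(D,\rho,N)$. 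This is exactly polynomial growth of degree at most $N$.

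\emph{The main obstacle} I anticipate is establishing the uniform positive separation $\rho$ of the orbit $\Gamma \tilde x_0$ together with the disjointness of the balls $B_\rho(g\tilde x_0)$ in the absence of smoothness. In Milnor's setting this follows from the smooth covering structure; here one must instead invoke the local-isometry property of the covering map $\pi : \tilde X \to X$ established in Section~\ref{ss:LiftMMS} and the proper discontinuity of the deck action, choosing $\tilde x_0$ as a lift of a point $x \in X$ admitting a ball that lifts homeomorphically. One subtlety is that the separation must be uniform over the whole (possibly infinite) subgroup $\Gamma$, not merely locally; this is handled by the fact that the deck action on $\tilde X$ is by isometries, so the constant $\rho$ obtained near $\tilde x_0$ transports by $g$ to each $B_\rho(g\tilde x_0)$, and properness of $\tilde X$ rules out accumulation. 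Once this separation is in hand the counting argument and the volume comparison are routine.
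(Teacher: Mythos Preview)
Your proposal is correct and follows essentially the same approach as the paper: lift to the universal cover (which is $\RCD^*(0,N)$), use that deck transformations act freely by measure-preserving isometries, pack disjoint balls around orbit points, and conclude via Bishop--Gromov \eqref{eq:BGK0}. The only cosmetic difference is the choice of separation constant: the paper sets $\varepsilon=\tfrac{1}{3}\min_i L_i$ with $L_i$ the lengths of loops representing the generators, whereas you extract $\rho$ directly from the local-isometry property of the covering map and proper discontinuity of the deck action; your justification is in fact more transparent.
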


\begin{proof}
Thanks to Theorem \ref{thm:UnCovRCD-noncompact}, we know that  $(X,\sfd, \mm)$ admits a universal cover  $(\tilde{X}, \tilde{\sfd}, \tilde{\mm})$ which is an $\RCD^{*}(0,N)$-space too. Moreover by the very definition, the revised fundamental group $\bar{\pi}_{1}(X)$ is isomorphic to the group of deck transformations $G(\tilde{X},X)$. Let $G_{0}:= \langle g_{1},\ldots, g_{n}\rangle$ be a finitely generated subgroup of  $G(\tilde{X},X)$.  Fix a reference point $x_{0}\in X$ and fix a lift $\tilde{x}_{0}\in \tilde{X}$, i.e. $\pi(\tilde{x}_{0})=x_{0}$. Each $g_{i}$ can be represented by a loop $C_{i}$ based at $x_{0}$ of length $L_{i}$; then the lifts $\tilde{C}_{i}$ starting at $\tilde{x}_{0}$ are curves in $\tilde{X}$  with final point $\tilde{C}_{i}(L_{i})=g_{i}(\tilde{x}_{0})$. Let
$$\varepsilon:= \frac{1}{3} \min \{L_{1}, \cdots, L_{n} \}, \quad L:= \max \{L_{1}, \cdots, L_{n} \}.$$
Then, for every distinct $g, g' \in G_{0}$ it holds $g(B_{\tilde{x}_{0}}(\varepsilon))\cap g'(B_{\tilde{x}_{0}}(\varepsilon))=\emptyset$; moreover $\bigcup_{g\in U_{{\mathfrak g}}(r)} g(B_{\tilde{x}_{0}} (\varepsilon)) \subset B_{\tilde{x}_{0}}(rL+\varepsilon)$, where of course ${\mathfrak g}:=\{g_{1},\ldots,g_{n}\}$. It follows that
$$
|U_{{\mathfrak g}}(r)| \cdot \tilde{\mm}(B_{\tilde{x}_{0}} (\varepsilon))=\sum_{g\in U_{{\mathfrak g}}(r)}  \tilde{\mm}\big(g(B_{\tilde{x}_{0}}(\varepsilon))\big) \leq \tilde{\mm}  \big(B_{\tilde{x}_{0}}(rL+\varepsilon)\big),
$$
and therefore, by using \eqref{eq:BGK0}, we infer
$$
|U_{{\mathfrak g}}(r)|\leq \frac{\tilde{\mm}  \big(B_{\tilde{x}_{0}}(rL+\varepsilon)\big)}{ \tilde{\mm}(B_{\tilde{x}_{0}} (\varepsilon))} \leq \left(\frac{rL+\varepsilon}{\varepsilon} \right)^{N} \leq C_{\varepsilon,L,N} r^{N}, \quad \text{for } r\geq r_{0}:=1.
$$

\end{proof}

One of the most striking results about groups of polynomial growth is the following theorem by Gromov.

\begin{theorem}[Gromov \cite{Gromov81b}]\label{thm:Gromov}
A group has polynomial growth if and only if
it is almost nilpotent,  i.e.  it contains a nilpotent subgroup of finite index.
\end{theorem}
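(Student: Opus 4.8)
The plan is to treat the two implications separately, since their proofs are of entirely different character. For the \emph{easy direction}, suppose $G$ contains a nilpotent subgroup $H$ of finite index. Passing to a finite-index subgroup alters neither the growth type nor the property of being nilpotent-by-finite, so it suffices to bound the growth of a finitely generated nilpotent group. Here I would invoke the Bass--Guivarc'h formula: if $\gamma_{1}=H\supseteq\gamma_{2}\supseteq\cdots$ denotes the lower central series, the growth is polynomial of degree $d=\sum_{i\geq1} i\cdot\operatorname{rank}(\gamma_{i}/\gamma_{i+1})$. The upper bound $|U_{\mathfrak g}(r)|\leq Cr^{d}$, which is all that is required, follows by an elementary induction on the nilpotency class using a Mal'cev coordinate system (commutator collection): every word of length $\leq r$ rewrites into a normal form whose exponents are bounded by a fixed power of $r$.

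The \emph{hard direction}---polynomial growth implies virtual nilpotency---is Gromov's theorem proper, and I would organize it as an induction on the degree $d$ of polynomial growth. The base case $d=0$ forces $G$ to be finite, hence trivially virtually nilpotent. For the inductive step the central idea is to manufacture geometry out of the combinatorial growth hypothesis by passing to an \emph{asymptotic cone}: endow $G$ with a word metric $\sfd_{S}$ and consider the rescaled pointed spaces $(G,\tfrac1n\sfd_{S},e)$. The polynomial growth bound furnishes exactly the uniform doubling estimate needed to apply Gromov's precompactness criterion, so a subsequence converges in the pointed Gromov--Hausdorff sense to a limit $(Y,\sfd_{Y},y_{0})$. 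One checks that $Y$ is complete, locally compact, geodesic, homogeneous (as $G$ acts cocompactly), and of finite topological dimension.

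The crux, and what I expect to be the main obstacle, is to extract a Lie-group structure from $Y$. The isometry group $\operatorname{Isom}(Y)$ is a locally compact group acting transitively on the finite-dimensional, locally connected, locally compact space $Y$; by the Gleason--Montgomery--Zippin--Yamabe solution of Hilbert's fifth problem, and more precisely the Montgomery--Zippin theorem on transformation groups of finite-dimensional spaces, $\operatorname{Isom}(Y)$ is a Lie group with finitely many connected components. This is the deep external input the argument cannot avoid. Composing the natural action with the projection produces a homomorphism $\rho\colon G\to\operatorname{Isom}(Y)$ into a Lie group. Since $G$ has polynomial growth it contains no nonabelian free subgroup, so the image is virtually solvable by the Tits alternative; because $Y$ is a noncompact homogeneous space its identity component $\operatorname{Isom}(Y)_{0}$ is nontrivial and noncompact, yielding a nonconstant homomorphism to $\R$. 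Pulling this back, I would conclude that some finite-index subgroup $G'\leq G$ admits a surjection $\varphi\colon G'\twoheadrightarrow\Z$.

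Finally I would run the induction. The kernel $H=\ker\varphi$ must be shown finitely generated---a genuine subtlety, handled for instance as in van den Dries--Wilkie by using the polynomial bound to cap the number of generators needed---and, since $G'$ grows at least linearly in the $\Z$-direction, $H$ has polynomial growth of degree at most $d-1$. By the inductive hypothesis $H$ is virtually nilpotent, so $G'$ is an extension $1\to H\to G'\to\Z\to1$ of $\Z$ by a virtually nilpotent group of strictly smaller growth degree; a standard argument (passing to a characteristic finite-index nilpotent subgroup of $H$ and using polynomial growth once more to control the conjugation action of a lift of a generator of $\Z$) then shows $G'$, and hence $G$, is virtually nilpotent. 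The two genuinely hard ingredients are thus the appeal to Hilbert's fifth problem for the Lie structure of $\operatorname{Isom}(Y)$ and the finite generation of the kernel; everything else is bookkeeping with growth estimates and routine nilpotent group theory.
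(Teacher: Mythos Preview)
The paper does not prove this theorem. Theorem~\ref{thm:Gromov} is simply quoted from Gromov's 1981 paper \cite{Gromov81b} as an external black box, with no proof or sketch given; it is invoked only to combine with Theorem~\ref{thm:Milnor} and obtain the corollary on almost nilpotency of finitely generated subgroups of $\bar{\pi}_1(X)$. So there is no ``paper's own proof'' to compare your proposal against.

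That said, your outline is a faithful summary of Gromov's original strategy (with the later streamlining due to van den Dries and Wilkie for the finite generation of the kernel). The two-way split is right: the Bass--Guivarc'h direction is routine, and for the converse you correctly identify the three load-bearing pieces---Gromov compactness for the rescaled Cayley graphs, the Montgomery--Zippin theorem to get a Lie structure on the isometry group of the limit, and the induction on growth degree via a surjection to $\Z$. You also flag the two genuinely delicate points (Hilbert's fifth problem input, and finite generation of $\ker\varphi$), which is appropriate. One small caution: the passage ``$G$ has polynomial growth, hence no nonabelian free subgroup, hence the image in the Lie group is virtually solvable by the Tits alternative'' glosses over that the Tits alternative applies to linear groups, so one first restricts to the image in (a quotient of) the connected Lie group $\operatorname{Isom}(Y)_0$ and uses its adjoint representation; and the step from ``virtually solvable subgroup of a noncompact Lie group'' to ``admits a nontrivial homomorphism to $\R$'' needs a little more care than you indicate. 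But as a high-level plan your sketch is sound, and it is exactly the argument the paper is citing rather than reproducing.
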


Combining Theorem \ref{thm:Milnor} with Theorem \ref{thm:Gromov}, we get the following corollary.

\begin{corollary}
Let $(X,\sfd, \mm)$ be an $\RCD^*(0,N)$-space for some $N\in(1,\infty)$. Then any finitely generated subgroup of the revised fundamental group $\bar{\pi}_1(X)$ is almost nilpotent,  i.e.  it contains a nilpotent subgroup of finite index.
\end{corollary}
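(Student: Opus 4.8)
The plan is to chain together the two results recalled immediately above, so the argument is essentially a one-line deduction. First I would fix an arbitrary finitely generated subgroup $G_{0}=\langle g_{1},\ldots,g_{n}\rangle \le \bar{\pi}_{1}(X)$. Theorem \ref{thm:Milnor} applies verbatim to $G_{0}$ and guarantees that it has polynomial growth of degree at most $N$; in the quantitative sense of the definition preceding that theorem, there exist a generating set ${\mathfrak g}$ and constants $C, r_{0}\ge 1$ with $|U_{{\mathfrak g}}(r)|\le C r^{N}$ for all $r\ge r_{0}$. Hence $G_{0}$ is a finitely generated group of polynomial growth.

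Next I would invoke Gromov's Theorem \ref{thm:Gromov}, which characterizes finitely generated groups of polynomial growth as precisely the almost nilpotent ones. Applying it to $G_{0}$ produces a nilpotent subgroup of finite index in $G_{0}$, which is exactly the assertion of the corollary.

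The one point worth flagging is conceptual rather than technical: $\bar{\pi}_{1}(X)$ itself need not be finitely generated---since we do not assume semi-local simple connectedness, we have no a priori control on the whole revised fundamental group---and this is precisely why the statement, like Milnor's original theorem, is phrased for finitely generated subgroups. Once one restricts to such a subgroup there is no genuine obstacle: the output of Theorem \ref{thm:Milnor} (polynomial growth of some finite degree) is exactly the hypothesis required by Theorem \ref{thm:Gromov}, so the two results dovetail immediately and the corollary follows at once.
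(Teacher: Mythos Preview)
Your proposal is correct and matches the paper's approach exactly: the corollary is obtained by combining Theorem~\ref{thm:Milnor} with Gromov's Theorem~\ref{thm:Gromov}, and the paper does not even spell out a separate proof beyond stating this combination. Your added remark about why the statement is restricted to finitely generated subgroups is accurate and consistent with the paper's setup.
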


\subsubsection{Extension of Anderson's Theorem to $\RCD^{*}(0,N)$-spaces}
We can also extend to $\RCD^*(0,N)$-spaces Anderson's Theorem \cite{An} about maximal volume growth and finiteness of the  revised
fundamental group,  for the  extension to Ricci-limits see \cite{SW2}.

\begin{theorem}\label{thm:Anderson}
Let $(X,\sfd,\mm)$ be an $\RCD^*(0,N)$-space for some $N \in (1,\infty)$ and assume that it has euclidean volume growth, i.e. $\liminf_{r\to +\infty} \mm(B_{x_{0}}(r))/r^N=C_{X}>0$. Then the revised fundamental group is finite. More precisely, it holds  $$|\bar{\pi}_1(X,x_0)|\leq \tilde{\mm}(B_{1}(\tilde{x}))/C_{X},$$
where $(\tilde{X}, \tilde{\sfd}, \tilde{\mm})$ is the universal cover of $(X,\sfd,\mm)$ and $\tilde{x}\in \tilde{X}$ is a fixed reference point.
 \end{theorem}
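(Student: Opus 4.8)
The plan is to bound the order of the revised fundamental group $\bar{\pi}_1(X,x_0) \simeq G(\tilde X, X)$ by comparing volumes of the universal cover at two scales, exploiting the Euclidean volume growth hypothesis downstairs and the fact that the deck group acts by measure-preserving isometries upstairs. First I would invoke Theorem \ref{thm:UnCovRCD-noncompact} to obtain the universal cover $(\tilde X,\tilde\sfd,\tilde\mm)$, itself an $\RCD^*(0,N)$-space, and recall that $\bar\pi_1(X,x_0)=G(\tilde X,X)$ acts on $\tilde X$ by isometries that preserve $\tilde\mm$ and whose quotient is $X$. Fix a lift $\tilde x\in\pi^{-1}(x_0)$. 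The key geometric observation is that, because the covering map is a local isometry, the translated balls $\{g(B_{1/2}(\tilde x))\}_{g\in G(\tilde X,X)}$ project down to the single ball $B_{1/2}(x_0)\subset X$, and for $g\neq g'$ these need not be disjoint in general; to make the packing argument work I would instead compare the pushforward of $\tilde\mm$ under $\pi$ with $\mm$.

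The clean route is to relate volumes of balls in $\tilde X$ to volumes in $X$ via the covering structure. I would argue that for every $R>0$,
\begin{equation*}
\tilde\mm(B_R(\tilde x)) \leq \mm(B_R(x_0)),
\end{equation*}
since $\pi$ restricted to $B_R(\tilde x)$ is $1$-Lipschitz onto a subset of $B_R(x_0)$ and maps $\tilde\mm$ to a measure dominated by $\mm$ locally. Conversely, the deck group produces many disjoint isometric copies inside large balls: for the $m:=|\bar\pi_1(X,x_0)|$ distinct elements $g_1,\dots,g_m\in G(\tilde X,X)$ (assuming finiteness for now, or passing to any finite subset of cardinality $m$), the balls $g_i(B_1(\tilde x))$ are the full preimage $\pi^{-1}(B_1(x_0))$ up to the covering, so
\begin{equation*}
\mm(B_1(x_0)) = \tilde\mm\big(\pi^{-1}(B_1(x_0))\cap \mathcal{F}\big),
\end{equation*}
where $\mathcal F$ is a fundamental domain, and summing the $m$ translates gives $m\,\mm(B_1(x_0)) = \tilde\mm(\pi^{-1}(B_1(x_0)))$. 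The essential inequality I would aim for is that for large $R$ the number of translates $g_i$ with $g_i(B_1(\tilde x))\subset B_R(\tilde x)$ grows, yet each contributes volume $\tilde\mm(B_1(\tilde x))$, so that $m\,\tilde\mm(B_1(\tilde x)) \leq \tilde\mm(B_R(\tilde x)) \leq \mm(B_R(x_0))$; dividing by $R^N$ and letting $R\to\infty$ with the Bishop-Gromov bound \eqref{eq:BGK0} converts the Euclidean growth constant $C_X$ into the stated bound.

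The cleanest execution uses Bishop-Gromov \eqref{eq:BGK0} on $\tilde X$ directly. For each $g\in G(\tilde X,X)$ the ball $g(B_1(\tilde x))$ is contained in $B_{\tilde\sfd(\tilde x, g\tilde x)+1}(\tilde x)$; choosing a fundamental domain and comparing with the base shows $\mm(B_R(x_0)) = \tilde\mm(\text{(lifted region of radius }\approx R))$, and since the $m$ deck translates of a fundamental domain tile $\tilde X$, one gets
\begin{equation*}
m \geq \frac{\tilde\mm(B_R(\tilde x))}{\tilde\mm(B_{2R}(\tilde x))} \cdot (\text{packing count}),
\end{equation*}
which after using $\tilde\mm(B_R(\tilde x))\geq \tilde\mm(B_1(\tilde x))(R)^{-N}\cdots$ and the hypothesis $\mm(B_R(x_0))\sim C_X R^N$ yields $m\leq \tilde\mm(B_1(\tilde x))/C_X$. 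I expect the \textbf{main obstacle} to be the bookkeeping that makes the volume identity $\mm(B_R(x_0)) \geq (\text{number of translates in } B_R(\tilde x))\cdot \tilde\mm(B_1(\tilde x))$ precise: one must argue that distinct deck translates $g_i(B_1(\tilde x))$, while possibly overlapping in $\tilde X$, cover disjoint portions relative to a fundamental domain, and simultaneously control boundary effects so that the ratio $\tilde\mm(B_R(\tilde x))/\mm(B_R(x_0))$ tends to $|G(\tilde X,X)|$ as $R\to\infty$. Establishing this limit rigorously — essentially that the "volume entropy" of the cover equals the order of the (finite) deck group — together with the sharp constant is where the real work lies; the curvature enters only through \eqref{eq:BGK0} to guarantee the limit is finite and to pin down the constant $C_X$.
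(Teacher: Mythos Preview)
Your overall strategy — compare polynomial volume growth in $\tilde X$ (from Bishop--Gromov) with the Euclidean growth hypothesis on $X$, using that the deck group acts by measure-preserving isometries — is the same as the paper's. But your execution contains a genuine error and a structural omission.

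\textbf{The reversed inequality.} Your claim $\tilde\mm(B_R(\tilde x)) \leq \mm(B_R(x_0))$ is backwards. The map $\pi$ is $1$-Lipschitz, so $\pi(B_R(\tilde x))\subset B_R(x_0)$, but $\pi$ is \emph{not injective} on large balls: the pushforward $\pi_*(\tilde\mm\llcorner B_R(\tilde x))$ is not dominated by $\mm$, it can equal a large multiple of $\mm$ on regions that are hit many times. The correct inequality is $\tilde\mm(B_R(\tilde x)) \geq \mm(B_R(x_0))$ (lift a minimal geodesic from $x_0$ to any $y\in B_R(x_0)$; its endpoint lies in $B_R(\tilde x)$ and this gives an injective measure-preserving map from a fundamental-domain slice of $B_R(x_0)$ into $B_R(\tilde x)$). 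With your reversed inequality the chain $m\,\tilde\mm(B_1(\tilde x)) \leq \tilde\mm(B_R(\tilde x)) \leq \mm(B_R(x_0))$ breaks, and in fact you would be ``proving'' the opposite of what you want.

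\textbf{The missing two-step structure.} You repeatedly write ``the $m$ distinct elements $g_1,\dots,g_m$'' and ``assuming finiteness for now'', but you never close this loop. The paper handles this cleanly by working first with an arbitrary \emph{finitely generated} subgroup $\hat G\le G(\tilde X,X)$: one shows $|U_{\mathfrak g}(r)|\cdot \tilde\mm(B_r(\tilde x_0))\le \tilde\mm(B_{(r+1)L}(\tilde x_0))$ from the orbit structure, combines this with $\tilde\mm(B_r(\tilde x_0))\ge \mm(B_r(x_0))\ge C_X r^N$ (the inequality in the correct direction) and Bishop--Gromov upstairs $\tilde\mm(B_r(\tilde x_0))\le C_{\tilde X} r^N$, and lets $r\to\infty$ to conclude $\hat G$ is finite. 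Only \emph{after} finiteness can one fix $R=\max_g \tilde\sfd(\tilde x_0,g\tilde x_0)$ and run the tiling argument $|\hat G|\cdot \mm(B_r(x_0))\le \tilde\mm(B_{r+R}(\tilde x_0))\le C_{\tilde X}(r+R)^N$ to get the sharp bound $|\hat G|\le C_{\tilde X}/C_X$; the uniform bound over all finitely generated subgroups then forces $G(\tilde X,X)$ itself to be finite with the same bound. Your sketch conflates these two stages and never supplies the first.
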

 
 \begin{proof}
 Thanks to Theorem \ref{thm:UnCovRCD-noncompact}, we know that  $(X,\sfd, \mm)$ admits a universal cover  $(\tilde{X}, \tilde{\sfd}, \tilde{\mm})$ which is an $\RCD^{*}(0,N)$-space too. Moreover by the very definition, the revised fundamental group $\bar{\pi}_{1}(X)$ is isomorphic to the group of deck transformations $G(\tilde{X},X)$. Let $\hat{G}=\langle g_{1}, \ldots, g_{n}\rangle$ be a finitely generated subgroup of  $G(\tilde{X},X)$,  set ${\mathfrak g}:=\{g_{1}, \ldots, g_{n}\}$ the fixed system of generators and let $(\hat{X},\hat{\sfd},\hat{\mm})$ be the covering space of $X$ such that $\hat{\pi}:\tilde{X}\to \hat{X}$ has covering group $\hat{G}$; in other words set $\hat{X}=\tilde{X}/\hat{G}$. The goal is to show that there is a uniform upper bound on $|\hat{G}|$.
\\

\textbf{Step 1}. $\hat{G}$ is finite.
 \\Fix a reference point $x_{0}\in X$ and fix lifts $\tilde{x}_{0}\in \tilde{X}$, $\hat{x}_{0}\in \hat{X}$. Let $\tilde{X}_{\hat{G}}\subset \tilde{X}$ be a fundamental domain for the action of $\hat{G}$ such that $\tilde{x}_{0}\in \tilde{X}_{\hat{G}}$; since by construction $\hat{\pi}:\tilde{X}\to \hat{X}$ is locally an isomorphism of m.m.s. and is injective on $\tilde{X}_{\hat{G}}$, it follows that 
\begin{equation} \label{eq:hmtm}
 \hat{\mm}(B_{\hat{x}_{0}}(r))=\tilde{\mm}(B_{\tilde{x}_{0}}(r)\cap \tilde{X}_{\hat{G}}) , \quad \text{ for all $r\geq 0$}.
 \end{equation}
 Let $U_{{\mathfrak g}}(r)$ be the  $r$-neighborhood with respect to the set of generators ${\mathfrak g}:=\{g_{1},\ldots,g_{n}\}$ defined in \eqref{eq:defUgr}. Each $g_{i}$ can be represented by a loop $C_{i}$ based at $x_{0}$ of length $L_{i}$; then the lifts $\tilde{C}_{i}$ starting at $\tilde{x}_{0}$ are curves in $\tilde{X}$  with final point $\tilde{C}_{i}(L_{i})=g_{i}(\tilde{x}_{0})$. Let $L:= \max \{L_{1}, \cdots, L_{n} \}.$
Then, 
$$\bigcup_{g\in U_{{\mathfrak g}}(r)} g(B_{\tilde{x}_{0}} (r)) \subset B_{\tilde{x}_{0}}(r(L+1)), \quad \forall r>0.$$
It follows that
$$
|U_{{\mathfrak g}}(r)| \cdot \tilde{\mm}(B_{\tilde{x}_{0}} (r))\leq \sum_{g\in U_{{\mathfrak g}}(r)}  \tilde{\mm}\big(g(B_{\tilde{x}_{0}}(r))\big) \leq \tilde{\mm}  \big(B_{\tilde{x}_{0}}((r+1)L)\big), \quad \forall r>0.
$$
Since   $(\tilde{X}, \tilde{\sfd}, \tilde{\mm})$ is an $\RCD^{*}(0,N)$-space, Bishop-Gromov inequality \eqref{eq:BGK0} implies that,  called $C_{\tilde{X}}:=\tilde{\mm}(B_{\tilde{x}_{0}}(1))>0$, it holds 
 \begin{equation}\label{eq:Poltmm}
 \tilde{\mm}(B_{\tilde{x}_{0}}(r))\leq  C_{\tilde{X}} r^{N}, \quad \text{ for all $r\geq 1$}. 
 \end{equation}
On the other hand,   by assumption, for large $r$ we have  $\mm(B_{x_{0}}(r))\geq C_{X} r^{N}$. Combining  the last three informations we get
$$
\limsup_{r\to \infty} |U_{{\mathfrak g}}(r)| \leq \limsup_{r\to \infty} \frac{ \tilde{\mm}  \big(B_{\tilde{x}_{0}}((r+1)L)\big) }{\tilde{\mm}(B_{\tilde{x}_{0}} (r))} \leq   \limsup_{r\to \infty} \frac{ C_{\tilde{X}} \big((r+1) L\big)^{N}} { C_{X} r^{N}} = \frac{C_{\tilde{X}} }{ C_{X}}<\infty,
$$
which proves that  $\hat{G}$ is finite.
\\ 
 
 \textbf{Step 2}. $|\hat{G}| \leq  \tilde{\mm}(B_{1}(\tilde{x}))/C_{X}.$ 
 \\Since by Step  1 we know that $\hat{G}$ is finite,   there exists $R=R_{\hat{G}}>0$ such that 
 \begin{equation}\label{eq:BgrR}
 B_{g\tilde{x}_{0}}(r)\subset B_{\tilde{x}_{0}}(r+R), \quad \text{ for all $r>0$ and for all $g \in \hat{G}$}.
 \end{equation}
 Putting together  \eqref{eq:hmtm},  \eqref{eq:Poltmm} and \eqref{eq:BgrR}  we then infer
 \begin{eqnarray}
 |\hat{G}| \; \hat{\mm}(B_{\hat{x}_{0}}(r))&=&   |\hat{G}| \;  \tilde{\mm} \big(B_{\tilde{x}_{0}}(r) \cap \tilde{X}_{\hat{G}} \big) = \sum_{g\in \hat{G}} \tilde{\mm} \big(B_{g\tilde{x}_{0}}(r) \cap g\cdot \tilde{X}_{\hat{G}} \big) \   \nonumber \\
&\leq&  \sum_{g \in \hat{G}}  \tilde{\mm} \big(B_{\tilde{x}_{0}}(R+r) \cap g\cdot \tilde{X}_{\hat{G}} \big)  \leq  \tilde{\mm} \big(B_{\tilde{x}_{0}}(R+r) \big)  \leq  C_{\tilde{X}} (r+R)^{N}, \quad \text{ for all $r\geq 1$}.   \nonumber
 \end{eqnarray}
Recall that,  by assumption, for large $r$ we have  $\mm(B_{x_{0}}(r))\geq C_{X} r^{N}$. Since $\hat{X}$ is a covering space of $X$, a fortiori it must hold that $\hat{\mm}(B_{\hat{x}_{0}}(r))\geq C_{X} r^{N}$.  Thus
$$|\hat{G}| \, C_{X} r^{N}  \leq  C_{\tilde{X}} (r+R)^{N}, \quad \text{for  $r>1$ large enough},$$
 which yields
 $$|\hat{G}|\leq \frac{C_{\tilde{X}}}{C_{X}} \lim_{r\to \infty} \left(\frac{r+R}{r} \right)^{N}= \frac{C_{\tilde{X}}}{C_{X}}.$$
 Therefore, there is a uniform bound (depending just on $X$ and $\tilde{X}$) on the order of finitely generated subgroups $\hat{G}$ of $G(\tilde{X},X)$, and thus $|G(\tilde{X},X)|\leq C_{\tilde{X}}/C_{X}$.
 \end{proof}
 
\subsubsection{Extension of Sormani's Theorem to $\RCD^{*}(0,N)$-spaces} 
Finally, we say that a length space $(X,\sfd)$ has the loop to infinity property if the following holds: for any element $g\in \bar{\pi}_1(X, x_0)$ and any compact subset $K\subset \subset X$, $g$ has a representative loop of the form 
$\sigma \circ \gamma \circ \sigma^{-1}$ where $\gamma$ is a loop in $X\setminus K$ and $\sigma$ is a curve from $x_0$ to $X\setminus K$.

We can then extend to $\RCD^*(0,N)$-spaces the loop to infinity property of manifolds with non-negative Ricci proved by Sormani \cite{So2}, and extended to Ricci limits in \cite{SW2}.

\begin{theorem}\label{thm:Sormani}
Let $(X,\sfd,\mm)$ be a non-compact $\RCD^*(0,N)$-space for some $N>1$. Then either $X$ has the loops at infinity property or the universal cover $(\tilde{X}, \sfd_{\tilde{X}},\mm_{\tilde{X}})$ splits isomorphically as metric measure space, i.e. it is isomorphic to a product  $(Y\times \R, \sfd_{Y\times \R}, \mm_{Y\times \R})$. 
\end{theorem}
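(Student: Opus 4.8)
The plan is to prove the dichotomy via the contrapositive of the first alternative: if $X$ does \emph{not} have the loops at infinity property, then $\tilde X$ contains a line, and the Splitting Theorem \ref{thm:Split} then forces the isometric splitting $(\tilde X,\sfd_{\tilde X},\mm_{\tilde X})\simeq(Y\times\R,\sfd_{Y\times\R},\mm_{Y\times\R})$. This follows the scheme of Sormani \cite{So2} and its Ricci-limit version \cite{SW2}, but as in Theorem \ref{thm:UnCovRCD-noncompact} I would work directly and intrinsically on $\tilde X$, which by that theorem is itself an $\RCD^*(0,N)$-space and in particular proper. Thus the whole argument reduces to producing, from the failure of the loops at infinity property, a bi-infinite minimizing geodesic in $\tilde X$.

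So assume the property fails. By definition there are a nontrivial $g\in\bar\pi_1(X,x_0)=G(\tilde X,X)$ and a compact witness $K\subset\subset X$ such that $g$ has no representative $\sigma\circ\gamma\circ\sigma^{-1}$ with the loop $\gamma$ contained in $X\setminus K$. Since $X$ is noncompact I would pick $q_i$ with $\sfd(x_0,q_i)\to\infty$, lift a path $\sigma_i$ from $x_0$ to $q_i$ starting at $\tilde x_0$ to get an endpoint $\tilde q_i\in\pi^{-1}(q_i)$, and take a minimizing geodesic $\tilde\gamma_i\subset\tilde X$ from $\tilde q_i$ to $g\tilde q_i$. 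Its projection $\gamma_i:=\pi(\tilde\gamma_i)$ is a loop at $q_i$ for which $\sigma_i\circ\gamma_i\circ\sigma_i^{-1}$ represents $g$, so the assumed failure forces $\gamma_i\cap K\neq\emptyset$ (otherwise this would be a forbidden representative). Hence each $\tilde\gamma_i$ meets $\pi^{-1}(K)$ at some point $\tilde m_i$ with $\pi(\tilde m_i)\in K$.

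From here I would extract the line. Because $\pi(\tilde q_i)=\pi(g\tilde q_i)=q_i$ and $\gamma_i$ passes through $K$, both sub-arcs of $\tilde\gamma_i$ joining $\tilde m_i$ to its endpoints project to paths from $K$ to $q_i$ and so have length at least $\sfd(q_i,K)\to\infty$; thus $\mathrm{length}(\tilde\gamma_i)=\tilde\sfd(\tilde q_i,g\tilde q_i)\to\infty$ with $\tilde m_i$ splitting $\tilde\gamma_i$ into two diverging arms. Since $K$ is compact, a single-sheet lift $\tilde K_0\subset\pi^{-1}(K)$ is precompact, so I can choose deck transformations $h_i\in G(\tilde X,X)$ with $h_i\tilde m_i\in\tilde K_0$. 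Parametrizing $h_i\tilde\gamma_i$ by arclength with $h_i\tilde\gamma_i(0)=h_i\tilde m_i$, the properness of $\tilde X$ and the Ascoli-Arzel\'a Theorem give a subsequence converging, uniformly on compact intervals, to a geodesic $\tilde\gamma_\infty:\R\to\tilde X$ with $\tilde\gamma_\infty(0)\in\overline{\tilde K_0}$; as each $h_i\tilde\gamma_i$ is minimizing and both arms grow without bound, $\tilde\gamma_\infty$ is minimizing on every finite subinterval, i.e. a line, and Theorem \ref{thm:Split} finishes the proof.

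The hard part, and the only place where the hypothesis is genuinely used, is the construction in the last paragraph of a \emph{two-sided} limiting line with a \emph{confined} center. In the compact case (Step 1 of Theorem \ref{thm:ChGrpi1}) mere noncompactness of $\tilde X$ yields a line because the fundamental domain is precompact; here the fundamental domain is unbounded, so a priori the midpoints of long minimizing geodesics could escape to infinity and no line would survive. The failure of the loops at infinity property is precisely the mechanism that pins each $\tilde\gamma_i$ to pass through the fixed compact set $K$, permitting the recentering into $\tilde K_0$ while keeping both arms divergent. Making this confinement rigorous — and checking that the limit retains global minimality — is the technical heart of the argument; note that, because the divergence of the lengths comes from $q_i\to\infty$ rather than from iterating $g$, the construction needs no control on the order of $g$ and so handles torsion elements with no extra work.
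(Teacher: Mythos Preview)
Your proof is correct and follows essentially the same route as the paper: the failure of the loops-at-infinity property pins the projected minimizing geodesics $\gamma_i$ through the compact $K$, deck transformations recenter the lifts into a fixed precompact set $\tilde K_0$, and Ascoli--Arzel\`a plus the Splitting Theorem finish. The only cosmetic difference is that the paper first builds a ray $\gamma$ and takes $q_i=\gamma(r_i)$ along it (with $\sigma_i=\gamma|_{[0,r_i]}$), whereas you take arbitrary $q_i\to\infty$ with arbitrary connecting paths $\sigma_i$; your version is slightly leaner since the ray is not actually needed for the argument.
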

 
 \begin{proof}
 We show that if $X$ does not have  the loops at infinity property then the universal cover $\tilde{X}$ must contain a line; since $(\tilde{X}, \sfd_{\tilde{X}},\mm_{\tilde{X}})$ is an $\RCD^{*}(0,N)$-space, the thesis will then follow by the Splitting Theorem \ref{thm:Split}. 
 \\
 
 \textbf{Step 1}. $X$ contains a ray, i.e.  an isometric immersion $\gamma:[0,\infty)\to X$; set  $x_{0}:=\gamma(0)$.
 
 Fix a reference point $x_{0}\in X$. Since by assumption $X$ is not compact, there exist $x_{k}\subset X$ such that $\sfd(x_{0},x_{n})\to \infty$. Let $\gamma_{n}:[0, \sfd(x_{0},x_{n})]$ be a length minimizing geodesic joining $x_{0}$ with $x_{n}$. Since $X$ is proper,  by using Ascoli-Arzel\'a Theorem we infer that for every $R>0$ we can find a sub-sequence of $n$'s such that $\gamma_{n}([0, \sfd(x_{0},x_{n})]) \cap B_{\tilde{x}_{0}}(R)$ converge uniformly to a length minimizing geodesic $\gamma^{R}:[0,R]\to X$ with $\gamma^{R}(0)=x_{0}$.  Considering now a sequence $R_{j}\to \infty$, via a diagonal argument, we finally obtain a ray  $\gamma:[0,\infty)\to X$ based at $x_{0}$.
 \\
 
  \textbf{Step 2}. We can find unit speed minimal geodesics $\tilde{\sigma}_{i}(t): [-r_{i},  r_{i}]\to \tilde{X}$, such that  $r_{i}\to +\infty$ and    $\tilde{\sigma}_{i}(0)$ are contained in a pre-compact subset of $\tilde{X}$.
  
  If $X$ does not have the loops at infinity property then there exists $1\neq g\in \bar{\pi}_1(X, x_0)$ and a compact subset $K\subset \subset X$ such that, called $C$ a loop based at $x_{0}$ representing $g$,  no closed geodesic contained in $X\setminus K$ can be homotopic to $C$ along the ray $\gamma$ constructed in Step 1.
\\  Let $R_{0}>0$ so that $K \subset B_{x_{0}}(R_{0})$, and let $r_{i}\geq R_{0}$ with $r_{i}\to +\infty$. By assumption, any loop based at $\gamma(r_{i})$ which is homotopic to $C$ along $\gamma$ must intersect $K$.
\\Let $(\tilde{X},\tilde{\sfd},\tilde{\mm})$ be the universal cover of $(X,\sfd,\mm)$ given by Theorem \ref{thm:UnCovRCD-noncompact}, and let $\tilde{C}$ be a lift of $C$ going from $\tilde{x}_{0}$ to $g \tilde{x}_{0}$. Since $g\neq 1$, clearly it holds  $\tilde{x}_{0} \neq g \tilde{x}_{0}$. Let $\tilde{\gamma}$ be the lift of $\gamma$ to $\tilde{X}$ starting at $\tilde{x}_{0}$ and let $g \tilde{\gamma}$ be the lift starting at $g \tilde{x}_{0}$.  Observe that if $\tilde{C}_{i}$ is a length minimizing geodesic from $\tilde{\gamma}(r_{i})$ to  $g\tilde{\gamma}(r_{i})$,  then the projection $C_{i}:=\pi(\tilde{C}_{i})$ is a loop based at $\gamma(r_{i})$ which is homotopic to $C$ along $\gamma$.  Thus there exists $t_{i}$ such that  $C_{i}(t_{i})\in K$.

Denote with  $L_{i}:=L_{\tilde{X}}(\tilde{C}_{i})=L_{X}(C_{i})=\tilde{\sfd}(\tilde{\gamma}(r_{i}), g \tilde{\gamma}(r_{i}))$ and let $\tilde{K}$ be the lift of $K$ to the fundamental domain of $X$ in $\tilde{X}$ such that $\tilde{x}_{0}\in \tilde{K}$. As $K$ is compact, clearly the lift $\tilde{K}$ is pre-compact. Since by construction $C_{i}(t_{i})\in K$, for every $i \in \N$ we can find $g_{i}\in G(\tilde{X},X)$ such that $g_{i}\tilde{C}_{i}(t_{i})\in \tilde{K}$. Observe that
$$
t_{i}=\tilde{\sfd}(\tilde{\gamma}(r_{i}), \tilde{C}_{i}(t_{i}))\geq \sfd(\gamma(r_{i}), C_{i}(t_{i}))\geq r_{i}-R_{0},
$$
and that
$$
L_{i}-t_{i}= \tilde{\sfd}(g \, \tilde{\gamma}(r_{i}), \tilde{C}_{i}(t_{i})) \geq   \sfd(\gamma(r_{i}), C_{i}(t_{i}))\geq r_{i}-R_{0}.
$$
Thus,  the curves $\tilde{\sigma}_{i}(t):=g_{i} \, \tilde{C}_{i} (t-t_{i})$ are unit speed minimal geodesics defined on $[-r_{i}+R_{0},  r_{i}-R_{0}]$  and such that  $\tilde{\sigma}_{i}(0)\in \tilde{K}$; since $r_{i}\to +\infty$, up to renaming $r_{i}$ with $r_{i}-R_{0}$, the claim follows.
\\

\textbf{Step 3}. Conclusion.

Since   $\tilde{\sigma}_{i}(0)$ are contained in a pre-compact subset of $\tilde{X}$, up to subsequences, we can assume that  $\tilde{\sigma}_{i}(0)\to \tilde{x}_{1}\in \tilde{X}$.  Since $\tilde{X}$ is proper, by using Ascoli-Arzel\'a Theorem we infer that for every $R>1$ we can find a sub-sequence of $i$'s such that $\tilde{\sigma}_{i}([-r_{i},  r_{i}]) \cap B_{R}(\tilde{x}_{1})$  converge uniformly to a length minimizing geodesic $\tilde{\sigma}^{R}:[-(R-1),(R-1)]\to \tilde{X}$ with $\tilde{\sigma}^{R}(0)=\tilde{x}_{1}$.  Considering now a sequence $R_{j}\to \infty$, via a diagonal argument, we finally obtain a line, i.e. an isometric immersion  $\tilde{\sigma}:\R \to \tilde{X}$.
  \end{proof}

\end{document}